\documentclass[11pt]{article}
\usepackage{amsfonts}
\usepackage{color}
\usepackage{amsmath,amssymb,comment}
\newtheorem{theo}{Theorem}[section]
\newtheorem{lem}[theo]{Lemma}

\newtheorem{prop}[theo]{Proposition}

\newtheorem{defi}[theo]{Definition}

\newcommand{\mysection}[1]{\section{#1} \setcounter{equation}{0}}
\newcommand{\proof}{{\sc Proof.} \quad}
\newcommand{\proofc}{{\sc Proof} \ }
\newcommand{\be}{\begin{equation} \label}
\newcommand{\ee}{\end{equation}}
\newcommand{\bea}{\begin{eqnarray}\label}
\newcommand{\eea}{\end{eqnarray}}
\newcommand{\bas}{\begin{eqnarray*}}
\newcommand{\eas}{\end{eqnarray*}}
\newcommand{\bit}{\begin{itemize}}
\newcommand{\eit}{\end{itemize}}
\newcommand{\qed}{\hfill$\Box$ \vskip.2cm}
\newcommand{\nn}{\nonumber}
\newcommand{\R}{\mathbb{R}}
\newcommand{\N}{\mathbb{N}}
\newcommand{\pO}{\partial\Omega}

\newcommand{\eps}{\varepsilon}

\newcommand{\supp}{{\rm supp} \, }

\newcommand{\wto}{\rightharpoonup}
\newcommand{\wsto}{\stackrel{\star}{\rightharpoonup}}
\newcommand{\hra}{\hookrightarrow}
\newcommand{\io}{\int_\Omega}

\newcommand{\del}{\delta}

\newcommand{\pa}{\partial}
\newcommand{\bom}{\overline{\Omega}}
\newcommand{\Om}{\Omega}

\newcommand{\wh}{\widehat}

\newcommand{\hs}{\hspace*}
\newcommand{\sm}{\setminus}

\newcommand{\vp}{\varphi}
\newcommand{\lbal}{\left\{ \begin{array}{l}}
\newcommand{\lball}{\left\{ \begin{array}{ll}}
\newcommand{\ear}{\end{array} \right.}

\newcommand{\abs}{\\[5pt]}

\newcommand{\ueps}{u_\eps}
\newcommand{\veps}{v_\eps}

\newcommand{\yeps}{y_\eps}

\newcommand{\Teps}{\Theta_\eps}

\newcommand{\vepsx}{v_{\eps x}}
\newcommand{\vepsxx}{v_{\eps xx}}
\newcommand{\vepsxxx}{v_{\eps xxx}}
\newcommand{\vepsxxxx}{v_{\eps xxxx}}
\newcommand{\vepst}{v_{\eps t}}
\newcommand{\uepsx}{u_{\eps x}}
\newcommand{\uepsxx}{u_{\eps xx}}
\newcommand{\uepsxxx}{u_{\eps xxx}}
\newcommand{\uepst}{u_{\eps t}}
\newcommand{\Tepsx}{\Theta_{\eps x}}
\newcommand{\Tepsxx}{\Theta_{\eps xx}}
\newcommand{\Tepst}{\Theta_{\eps t}}

\newcommand{\zd}{\zeta_\delta}
\newcommand{\whve}{\wh{v}_\eps}

\newcommand{\whvex}{\wh{v}_{\eps x}}

\newcommand{\whue}{\wh{u}_\eps}

\newcommand{\whuex}{\wh{u}_{\eps x}}

\newcommand{\whTe}{\wh{\Theta}_\eps}

\newcommand{\whTex}{\wh{\Theta}_{\eps x}}

\newcommand{\whv}{\wh{v}}
\newcommand{\whu}{\wh{u}}
\newcommand{\whux}{\wh{u}_x}
\newcommand{\whT}{\wh{\Theta}}
\newcommand{\whTx}{\wh{\Theta}_x}
\begin{document}
\allowdisplaybreaks
\title{
A simple model for one-dimensional nonlinear thermoelasticity:\\
Well-posedness in rough-data frameworks}
\author{
Michael Winkler\footnote{michael.winkler@math.uni-paderborn.de}\\
{\small Universit\"at Paderborn, Institut f\"ur Mathematik}\\
{\small 33098 Paderborn, Germany} }
\date{}
\maketitle
\begin{abstract}
\noindent 
In an open bounded interval $\Om$, the problem
\bas
	\lball
	u_{tt} = u_{xx} - \big(f(\Theta)\big)_x,
	\qquad & x\in\Om, \ t>0, \\[1mm]
	\Theta_t = \Theta_{xx} - f(\Theta) u_{xt},
	\qquad & x\in\Om, \ t>0, 
	\ear
\eas
is considered under the boundary conditions $u|_{\pO}=\Theta_x|_{\pO}=0$, 
and for $f\in C^2([0,\infty))$ satisfying $f(0)=0$,
$f'>0$ on $[0,\infty)$ and $f'\in W^{1,\infty}((0,\infty))$.\abs
In the sense of unconditional global existence, uniqueness and continuous dependence, 
this problem is shown to be well-posed within ranges of initial data merely satisfying
\bas
	u_0\in W_0^{1,2}(\Om),
	\quad
	u_{0t} \in L^2(\Om)
	\quad \mbox{and} \quad
	\Theta_0 \in L^2(\Om)
	\mbox{ with $\Theta\ge 0$ a.e.~in $\Om$,}
\eas
and in classes of solutions fulfilling
\bas
	\lbal
	u\in C^0([0,\infty);W_0^{1,2}(\Om)), \\[1mm]
	u_t \in C^0([0,\infty);L^2(\Om))
	\qquad \mbox{and} \\[1mm]
	\Theta\in C^0([0,\infty);L^2(\Om)) \cap L^2_{loc}([0,\infty);W^{1,2}(\Om)).
	\ear
\eas
\noindent {\bf Key words:} nonlinear acoustics; thermoelasticity; uniqueness; continuous dependence\\
{\bf MSC 2020:} 74H20 (primary); 74F05, 35L05, 35D30 (secondary)
\end{abstract}
%
%
%
%
%
%
%
\newpage
\section{Introduction}\label{intro}
In mathematical descriptions of nonlinear thermoelastic interaction in one-dimensional materials, the hyperbolic-parabolic system
\be{00}
	\lball
	u_{tt} = u_{xx} - \Theta_x, \\[1mm]
	\Theta_t = \Theta_{xx} - \Theta u_{xt},
	\ear
\ee
arises as a minimal representative within classes of more general models (\cite{slemrod}, \cite{hrusa_tarabek}).
In line with pioneering discoveries on the occurrence of finite-time $C^2$-blow-up phenomena in related systems
(\cite{dafermos_hsiao_BU}, \cite{racke_bu}, \cite{hrusa_messaoudi}), early literature concerned with existence
theories for (\ref{00}) and more complex variants concentrated on the construction either of local-in-time classical
solutions (\cite{slemrod}, \cite{racke88}), or on global smooth solutions evolving from suitably small perturbations 
of homogeneous states (\cite{slemrod}, \cite{hrusa_tarabek}, \cite{racke88}, \cite{racke_shibata}, \cite{racke_shibata_zheng}, 
\cite{kim}, \cite{jiang1990}); results on related linear problems can be found in \cite{dafermos1968}, \cite{koch}, 
\cite{jiang_racke_book} and \cite{yaguang_wang}, for instance.
Global solutions far from equilibrium seem to have been considered mainly for relatives of (\ref{00}) which 
additionally account for viscosity effects and hence for a supplementary dissipative mechanism
(\cite{racke_zheng}, \cite{hsiao_luo}).\abs
In contrast to this, global large-data solutions to the purely thermoelastic nonlinear system (\ref{00}) 
seem to have been obtained only recently:
In \cite{cieslak_SIMA}, (\ref{00}) has been considered in a bounded real interval $\Om$ 
and under homogeneous boundary conditions of Dirichlet type for
the displacement field $u$, and of Neumann type for the temperature variable $\Theta$, and under the assumption that
the initial data be regular enough fulfilling
\be{ireg}
	u_0\in W^{2,2}(\Om) \cap W_0^{1,2}(\Om),
	\qquad 
	u_{0t} \in W_0^{1,2}(\Om)
	\qquad \mbox{and} \qquad
	\Theta_0\in W^{1,2}(\Om)
	\mbox{ with $\Theta_0>0$ in } \bom,
\ee
a corresponding initial-boundary value problem with initial condition $(u,u_t,\Theta)|_{t=0}=(u_0,u_{0t},\Theta_0)$
has been found to admit a unique global solution $(u,\Theta)$ with, inter alia,
$u\in L^\infty((0,\infty);W^{2,2}(\Om))$, $u_t\in L^\infty((0,\infty);W^{1,2}(\Om))$ 
and $\Theta\in L^\infty((0,\infty);W^{1,2}(\Om))$ (cf.~\cite[Definition 1.2 and Theorem 1.3]{cieslak_SIMA} for a more
precise statement); beyond this, in \cite{cieslak_MAAN} the large time behavior of all these solutions has been found to 
be determined by stabilization toward constant equilibria.
We remark that for multi-dimensional versions and relatives of (\ref{00}), large-data solutions have been constructed 
in various particular modeling contexts of partially considerable complexity, 
but predominantly within suitably generalized concepts of solvability
(cf.~\cite{roubicek}, \cite{mielke_roubicek}, \cite{owczarek_wielgos}, \cite{cieslak_muha_trifunovic},
\cite{roubicek_SIMA10}, \cite{blanchard_guibe} and \cite{pawlow_zajaczkowski_SIMA13} for a small selection).\abs
While the set of initial data satisfying (\ref{ireg}) does contain functions of arbitrary size in the respective spaces,
this class is significantly narrower than that of all data which are compatible with the natural energy identity
\be{energy}
	\frac{d}{dt} \bigg\{ \frac{1}{2} \io u_t^2 + \frac{1}{2} \io u_x^2 + \io \Theta \bigg\}
	= 0
\ee
formally associated with (\ref{00}).
After all, under the requirement that merely
\be{01}
	u_0\in W_0^{1,2}(\Om),
	\qquad 
	u_{0t} \in L^2(\Om)
	\qquad \mbox{and} \qquad
	\Theta_0\in L^1(\Om)
	\mbox{ is such that $\Theta_0\ge 0$ a.e.~in } \Om,
\ee
for a problem slightly more general than (\ref{00}) (see (\ref{0}) below) a statement on global existence of at least one
solution could recently be derived in a suitable framework of weak solvability (see Definition \ref{dw} and \cite{win_existence}),
and a statement on large time convergence to homogeneous states can be found in \cite{win_conv}.
It seems unknown, however, whether in these energy-minimal settings such weak solutions must be unique.\abs
{\bf Main results.} \quad
The intention of the present manuscript is to address this latter issue, and to identify a scenario capable of enforcing 
genuine well-posedness, with a particular focus on the question how far (\ref{00}) is well-behaved in this regard also
in the presence of large and possibly even discontinuous initial strain distributions $u_{0x}$ admissible, e.g., in (\ref{01}).
To investigate this in the context of the problem class addressed in \cite{win_existence}, 
in an open bounded real interval $\Om$ we will subsequently consider
\be{0}
	\lball
	u_{tt} = u_{xx} - \big(f(\Theta)\big)_x,
	\qquad & x\in\Om, \ t>0, \\[1mm]
	\Theta_t = \Theta_{xx} - f(\Theta) u_{xt},
	\qquad & x\in\Om, \ t>0, \\[1mm]
	u=0, \quad \Theta_x=0,
	\qquad & x\in\pO, \ t>0, \\[1mm]
	u(x,0)=u_0(x), \quad u_t(x,0)=u_{0t}(x), \quad \Theta(x,0)=\Theta_0(x),
	\qquad & x\in\Om.
	\ear
\ee
Here, choosing $f\equiv id$ leads to a reduction to the model (\ref{00})
which corresponds to one-dimensional bodies with free energy given by 
${\mathcal{E}}=\frac{1}{2} u_x^2 - \Theta u_x - \Theta \ln \Theta + \Theta$, 
while (\ref{0}) with more general $f$ can be regarded as a limit 
of a more complex and not necessarily hyperbolic-parabolic model obtained on choosing
${\mathcal{E}}=\frac{1}{2} u_x^2 - f(\Theta) u_x - \Theta \ln \Theta + \Theta$ within certain small-strain ranges
(\cite{hrusa_tarabek}, \cite{win_existence}).\abs
The conceptual basis for our considerations is formed by the following specification of solvability,
here yet quite moderate with regard to aspects of regularity.
\begin{defi}\label{dw}
  Let $\Om\subset\R$ be a bounded open interval, let 
  $f\in C^1([0,\infty))$,
  let
  $u_0\in L^1(\Om)$, $u_{0t}\in L^1(\Om)$ and $\Theta_0\in L^1(\Om)$, and let $T>0$.
  Then by a {\em weak solution} of (\ref{0}) in $\Om\times (0,T)$ we mean a pair $(u,\Theta)$ of functions
  \be{w1}
	\lbal
	u\in C^0([0,T];L^1(\Om)) \cap L^1((0,T);W_0^{1,1}(\Om))
	\qquad \mbox{and} \\[1mm]
	\Theta\in L^1((0,T);W^{1,1}(\Om))
	\ear
  \ee
  such that $\Theta\ge 0$ a.e.~in $\Om\times (0,T)$, that
  \be{w3}
	\big\{ u_t \, , \, f'(\Theta) \Theta_x \, , \, f'(\Theta) \Theta_x u_t \, , \, f(\Theta) u_t \big\}
	\subset L^1(\Om\times (0,T)),
  \ee
  and that
  \bea{wu}
	\int_0^T \io u \vp_{tt}
	- \io u_{0t} \vp(\cdot,0)
	+ \io u_0 \vp_t(\cdot,0)
	= -  \int_0^T \io u_x \vp_x
	- \int_0^T \io f'(\Theta) \Theta_x \vp
  \eea
  for all $\vp\in C_0^\infty(\Om\times [0,T))$, as well as
  \bea{wt}
	- \int_0^T \io \Theta \vp_t - \io \Theta_0 \vp(\cdot,0)
	= - \int_0^T \io \Theta_x \vp_x
	+ \int_0^T \io f'(\Theta) \Theta_x u_t \vp
	+ \int_0^T \io f(\Theta) u_t \vp_x
  \eea
  for each $\vp\in C_0^\infty(\bom\times [0,T))$.
  If $(u,\Theta):\Om\times (0,\infty) \to\R^2$ is a weak solution of (\ref{0}) in $\Om\times (0,T)$ for all $T>0$,
  then $(u,\Theta)$ will be called a {\em global weak solution} of (\ref{0}).
\end{defi}
Our main result now asserts that 
under assumptions on the initial data which are identical to the minimal energy-consistent
properties in (\ref{01}) with respect to the mechanical part, and which are only slightly stronger with regard to
the temperature distribution, even a uniquely determined global weak solution can always be found which actually enjoys
regularity features significantly stronger than those required in the above definition:
\begin{theo}\label{theo99}
  Let $\Om\subset\R$ be an open bounded interval, suppose that
  \be{f}
	f\in C^2([0,\infty)) 
	\mbox{ is such that $f(0)=0$ and $f'>0$ on $[0,\infty)$ as well as $f'\in W^{1,\infty}((0,\infty))$,}
  \ee
  and assume that
  \be{init}
	u_0\in W_0^{1,2}(\Om),
	\quad
	u_{0t} \in L^2(\Om)
	\quad \mbox{and} \quad
	\Theta_0 \in L^2(\Om)
	\mbox{ is such that $\Theta\ge 0$ a.e.~in $\Om$.}
  \ee
  Then there exists precisely one global weak solution $(u,\Theta)=:S(u_0,u_{0t},\Theta_0)$
  of (\ref{0}) which has the additional property that 
  \be{reg}
	\lbal
	u\in C^0([0,\infty);W_0^{1,2}(\Om)), \\[1mm]
	u_t \in C^0([0,\infty);L^2(\Om))
	\qquad \mbox{and} \\[1mm]
	\Theta\in C^0([0,\infty);L^2(\Om)) \cap L^2_{loc}([0,\infty);W^{1,2}(\Om)).
	\ear
  \ee
\end{theo}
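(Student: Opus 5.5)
The plan has two parts: existence via a priori estimates for smooth approximations that depend only on the norms in (\ref{init}), and uniqueness via an energy-type stability estimate for differences of two solutions in the class (\ref{reg}).

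\textbf{Existence.} I would regularize the data by choosing $(u_{0\eps},u_{0t\eps},\Theta_{0\eps})$ smooth and compatible, with $\Theta_{0\eps}>0$, converging in $W_0^{1,2}\times L^2\times L^2$. By the existence theory from \cite{win_existence} (and the classical local theory) we obtain smooth solutions $(\ueps,\Teps)$. The central step is an $\eps$-independent bound on
\[
\io \uepst^2+\io\uepsx^2+\io\Teps^2+\int_0^T\io\Tepsx^2 .
\]
To get it I would not multiply the heat equation by $\Theta$ directly, since that produces $\int f(\Theta)\Theta u_{xt}$, which cannot be controlled by hyperbolic energy. Instead I would introduce $F(\Theta)=\int_0^\Theta f$ and combine the mechanical energy $\frac12\io(u_t^2+u_x^2)$ with $\io G(\Theta)$ for a suitable $G$. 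For $f={\rm id}$, $G=\Theta$ gives (\ref{energy}). A second-level functional then comes from testing the heat equation by $f(\Theta)$ and the wave equation by suitable $x$-integrated quantities, which exploits the exact cancellation between $-(f(\Theta))_x u_t$ and $-f(\Theta)u_{xt}$. The goal is a Gronwall inequality for $y(t)=\io u_t^2+\io u_x^2+\io\Theta^2$. Here $f'\in L^\infty$ gives $f(\Theta)\le C\Theta$, and one-dimensional Gagliardo--Nirenberg bounds $\|\Theta\|_{L^\infty}^2\le C\|\Theta\|_{L^2}\|\Theta\|_{W^{1,2}}$ to absorb terms into $\int\Theta_x^2$. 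With these bounds, Aubin--Lions compactness for $\Teps$, weak-$\star$ compactness for $(\uepsx,\uepst)$, and the weak formulation with $f'(\Teps)\Tepsx\to f'(\Theta)\Theta_x$ (using strong convergence of $\Teps$) yield a weak solution. For the continuity in (\ref{reg}), $\Theta\in C^0([0,\infty);L^2)$ follows from $\Theta_t\in L^2((0,T);(W^{1,2})^\star)$ and Lions--Magenes. For $u$, I would use the linear wave equation with forcing $(f(\Theta))_x\in L^2_{loc}L^2$, which gives $(u,u_t)\in C^0(W_0^{1,2}\times L^2)$ by standard linear theory.

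\textbf{Uniqueness.} Let $(u,\Theta)$ and $(\whu,\whT)$ be two solutions in the class (\ref{reg}), and set $v=u-\whu$, $w=\Theta-\whT$. I would test the difference of the wave equations by $v_t$ and the difference of the heat equations by $w$; both tests are justifiable in this class through the linear theory, again with $f(\Theta)_x\in L^2L^2$. The wave part gives $-\int (f(\Theta)-f(\whT))_x v_t$. The heat part gives
\[
-\int\big(f(\Theta)u_{xt}-f(\whT)\whu_{xt}\big)w,
\]
which contains $u_{xt}$. Since $u_{xt}$ is not a function in this class, this is the main obstacle. I would first move the derivative off $u_t$ by integrating by parts in $x$, writing $f(\Theta)u_{xt}=(f(\Theta)u_t)_x-f'(\Theta)\Theta_xu_t$ as in (\ref{wt}). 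The critical term becomes $\int f(\Theta)v_t w_x+\int(f(\Theta)-f(\whT))\whu_t w_x$ plus terms with $\Theta_x$, $w_x$ multiplied by $u_t$.

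To bound these I would use $\|u_t\|_{L^2}\in L^\infty$, $\|\Theta\|_{L^\infty}^2\le C\|\Theta\|_{W^{1,2}}$ (integrable in time), and $\|w\|_{L^\infty}^2\le C\|w\|_{L^2}\|w\|_{W^{1,2}}$. Terms like $\int|\whu_t||w||w_x|$ are then bounded by $\|\whu_t\|_{L^2}\|w\|_{L^\infty}\|w_x\|_{L^2}$, and Young's inequality with the 1D interpolation closes them with the factor $\|w\|_{L^2}^{2}$, since in 1D the power of $\|w_x\|$ is $3/2<2$. The term $\int f(\Theta)v_tw_x\le \frac14\|w_x\|^2+C\|\Theta\|_{L^\infty}^2\|v_t\|^2$ has an $L^1$-in-time coefficient. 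This gives
\[
z'\le h(t)z,\qquad z=\io v_t^2+\io v_x^2+\io w^2,\qquad h\in L^1_{loc},
\]
and Gronwall yields $z\equiv0$. If the term $-\int(f(\Theta)-f(\whT))_xv_t$ from the wave part turns out to be problematic, I would instead use $\int (f(\Theta)-f(\whT))v_{xt}$ and combine it with the heat-side term, exploiting the same cancellation as in the energy identity.
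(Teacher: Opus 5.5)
Your uniqueness argument is essentially the paper's. The splittings you describe are those of Lemma \ref{lem13}. Testing by $w$ is justified by Lions--Magenes, where the paper uses Steklov averages in Lemma \ref{lem14}. On the wave side you use the energy identity of Lemma \ref{lem877}, and the Gronwall step is that of Lemma \ref{lem21}. Your route to the continuity properties in (\ref{reg}) is also sound: uniqueness for the linear wave equation with forcing $-f'(\Theta)\Theta_x\in L^2_{loc}([0,\infty);L^2(\Om))$, plus Lions--Magenes for $\Theta$ using $f'(\Theta)\Theta_x u_t\in L^2L^1\hookrightarrow L^2(W^{1,2})^\star$. It is even shorter than the paper's short-time/time-shift argument (Lemmas \ref{lem6}, \ref{lem10}, \ref{lem111}). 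However, it presupposes a weak solution with $u_x,u_t\in L^\infty_{loc}L^2$ and $\Theta\in L^2_{loc}W^{1,2}$, and producing that solution is where the gap lies.

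Passing to the limit in (\ref{wt}) requires identifying the weak limit of $f'(\Teps)\Tepsx\uepst$. Both $\Tepsx$ and $\uepst$ are only known to converge weakly. Aubin--Lions gives strong convergence of $\Teps$, hence of $f(\Teps)$ and $f'(\Teps)$, which handles the wave equation and the term $f(\Theta)u_t\vp_x$. It gives no strong convergence of $\Tepsx$, and the hyperbolic component has no compactness at all at the energy level. So ``Aubin--Lions for $\Teps$, weak-$\star$ compactness for $(\uepsx,\uepst)$'' does not produce a weak solution.

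Moreover, the smooth approximate solutions you start from are not supplied by what you cite. For general $f$, classical theory for (\ref{0}) is only local, and the available higher-order estimate via $\rho=f'/f$ holds only on a short interval (Lemma \ref{lem6}). What \cite{win_existence} provides are smooth solutions of the regularization (\ref{0eps}), and the passage from those to a weak solution is exactly the nontrivial step you skip.

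The paper never uses compactness for rough data. For data as in (\ref{i0}) it takes the weak solution of \cite{win_existence} (Lemma \ref{lem51}) and shows it lies in (\ref{reg}). For general data it applies the difference estimate of Lemma \ref{lem21}, through Lemmas \ref{lem24} and \ref{lem97}, to these solutions with smoothed data. This makes them Cauchy in $C^0_{loc}W_0^{1,2}\times C^0_{loc}L^2\times(C^0_{loc}L^2\cap L^2_{loc}W^{1,2})$. With $\Theta^{(k)}_x\to\Theta_x$ and $u^{(k)}_t\to u_t$ strongly, the product term passes to the limit trivially (Lemma \ref{lem11}). You already have this stability estimate from your uniqueness part. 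The missing idea is to use it for strong convergence of approximations that are themselves solutions of (\ref{0}) in the class (\ref{reg}), which in turn needs existence in that class for smooth data first.

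A smaller point: testing the heat equation by $\Theta$ is not uncontrollable. Integrating by parts, $\io f(\Theta)\Theta u_{xt}=-\io (f(\Theta)\Theta)_x u_t$, which is bounded by $C\|\Theta\|_{L^\infty}\|\Theta_x\|_{L^2}\|u_t\|_{L^2}$. The energy bound on $\|u_t\|_{L^2}$ together with Gagliardo--Nirenberg then closes the Gronwall argument, as in Lemma \ref{lem9}. Your vaguely described second-level functional is unnecessary.
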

In fact, these solutions furthermore depend continuously on the initial data in the following sense.
\begin{prop}\label{prop98}
  Let $\Om\subset\R$ be an open bounded interval, and assume (\ref{f}).
  Then the mapping $S$ defined in Theorem \ref{theo99} acts as a continuous operator 
  \bas
	& & \hs{-12mm}
	S: \ W_0^{1,2}(\Om)\times L^2(\Om) \times L^2(\Om) \nn\\
	&\to& C^0_{loc}([0,\infty);W_0^{1,2}(\Om)) \times C^0_{loc}([0,\infty);L^2(\Om))
  	\times \big( C^0_{loc}([0,\infty);L^2(\Om)) \cap L^2_{loc}([0,\infty);W^{1,2}(\Om))\big).
  \eas
  That is, if
  $(u_0^{(k)})_{k\in\N} \subset W_0^{1,2}(\Om)$, $(u_{0t}^{(k)})_{k\in\N} \subset L^2(\Om)$ and 
  $(\Theta_0^{(k)})_{k\in\N} \subset L^2(\Om)$
  are such that $\Theta_0^{(k)} \ge 0$ a.e.~in $\Om$ for all $k\in\N$, and that as $k\to\infty$ we have
  \be{98.1}
	\lbal
	u_0^{(k)} \to u_0 
	\qquad \mbox{in } W_0^{1,2}(\Om), \\[1mm]
	u_{0t}^{(k)} \to u_{0t}
	\qquad \mbox{in } L^2(\Om)
	\qquad \mbox{and} \\[1mm]
	\Theta_0^{(k)} \to \Theta_0
	\qquad \mbox{in } L^2(\Om)
	\ear
  \ee
  with some $u_0\in W_0^{1,2}(\Om), u_{0t} \in L^2(\Om)$ and $\Theta_0\in L^2(\Om)$,
  then the corresponding global weak solutions $(u^{(k)},\Theta^{(k)}):=S(u_0^{(k)},u_{0t}^{(k)},\Theta_0^{(k)})$, 
  $k\in\N$, and $(u,\Theta):=S(u_0,u_{0t},\Theta_0)$ of (\ref{0}) 
  according to Theorem \ref{theo99} satisfy 
  \be{98.2}
	\lbal
	u^{(k)} \to u
	\qquad \mbox{in } C^0_{loc}([0,\infty);W_0^{1,2}(\Om)), \\[1mm]
	u_t^{(k)} \to u_t
	\qquad \mbox{in } C^0_{loc}([0,\infty);L^2(\Om))
	\qquad \mbox{and} \\[1mm]
	\Theta^{(k)} \to \Theta
	\qquad \mbox{in } C^0_{loc}([0,\infty);L^2(\Om)) \cap L^2_{loc}([0,\infty);W^{1,2}(\Om))
	\ear
  \ee
  as $k\to\infty$.
\end{prop}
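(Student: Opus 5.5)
The plan is to derive Proposition \ref{prop98} from a quantitative stability estimate of the type that should underlie the uniqueness part of Theorem \ref{theo99}. For two solutions $(u,\Theta)$ and $(\whu,\whT)$ in the class (\ref{reg}), set $v:=u-\whu$ and $\zeta:=\Theta-\whT$. We then aim at a Gronwall-type inequality for
\bas
	y(t):=\frac{1}{2}\io v_t^2+\frac{1}{2}\io v_x^2+\frac{1}{2}\io \zeta^2,
\eas
augmented by $\int_0^t\io\zeta_x^2$.

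For the mechanical part we test the difference of the first equations by $v_t$. Because $f(\Theta)_x=f'(\Theta)\Theta_x$, after integration by parts the coupling contributes $\io (f(\Theta)-f(\whT))v_{xt}$. This term is dangerous, since $v_{xt}$ is not controlled, and it must be cancelled against the temperature equation. The difference of the second equations, tested by $\zeta$, produces $-\io (f(\Theta)u_{xt}-f(\whT)\whu_{xt})\zeta$, which again involves $u_{xt}$. In weak form (\ref{wt}) this is rewritten as $\io (f'(\Theta)\Theta_x u_t-f'(\whT)\whT_x\whu_t)\zeta+\io(f(\Theta)u_t-f(\whT)\whu_t)\zeta_x$.

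The key decision is therefore to avoid testing by $\zeta$ naively. Instead we would test the temperature difference by a multiple of $f(\Theta)-f(\whT)$, or equivalently work with the variable $F(\Theta):=\int_0^\Theta ds/f(s)$-type substitutes. The goal is that the $v_{xt}$ term cancels exactly with the mechanical one, leaving only terms that are controllable using $\|u_t\|_{L^2}$, $\|\Theta_x\|_{L^2}$ and one-dimensional embeddings $\|\zeta\|_{L^\infty}^2\le C\|\zeta\|_{L^2}\|\zeta\|_{W^{1,2}}$. By (\ref{f}), $f'$ is bounded above and below on bounded sets and $f''$ is bounded, so $f(\Theta)-f(\whT)$ is comparable to $\zeta$ in $L^2$ and $H^1$ up to terms involving $\Theta_x\zeta$. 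The remaining error terms look like $\io|\Theta_x||u_t||\zeta|^2$, which we bound by $\|\Theta_x\|_{L^2}\|u_t\|_{L^2}\|\zeta\|_{L^\infty}^2$ and then absorb via Young into $\frac12\io\zeta_x^2+C(1+\|\Theta_x\|_{L^2}^2)\|u_t\|^2_{L^2}y$. Since $\Theta_x\in L^2_{loc}(W^{1,2})$ and $u_t\in L^\infty_{loc}(L^2)$, the coefficient is in $L^1_{loc}$, and Gronwall yields
\bas
	y(t)+\int_0^t\io\zeta_x^2\le C(T)\,y(0)
\eas
on $[0,T]$. The constant $C(T)$ depends only on bounds of the two solutions in the norms of (\ref{reg}).

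The main obstacle is to make this rigorous for weak solutions with only $u_t\in L^2$: we cannot literally use $v_t$ or $f(\Theta)-f(\whT)$ as test functions. We would try Steklov averaging in time, or mollification, together with the energy identity, which should follow from (\ref{reg}) via the continuity $u\in C^0(W^{1,2})$ and $u_t\in C^0(L^2)$. A second technical point is that the nonlinear comparison between $\zeta$ and $f(\Theta)-f(\whT)$ requires a uniform bound on $\Theta$, which is unavailable. We would handle this with the global bounds $0<f'$, $f'\in W^{1,\infty}$ and the positivity of $f'$ near the relevant values, possibly combined with a lower bound $f'\ge c>0$ on ranges controlled in $L^\infty_tL^2_x\cap L^2_tH^1_x$.

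For Proposition \ref{prop98} itself, we first show that the solutions $(u^{(k)},\Theta^{(k)})$ satisfy bounds in the norms of (\ref{reg}) on $[0,T]$ that are uniform in $k$. These come from the a priori estimates of the existence proof of Theorem \ref{theo99}, which depend only on the norms of the initial data and hence are bounded along the convergent sequence (\ref{98.1}). Applying the stability estimate to $(u^{(k)},\Theta^{(k)})$ and $(u,\Theta)$ then gives $\sup_{[0,T]}y_k+\int_0^T\io\zeta_{k,x}^2\le C(T)y_k(0)\to0$. This yields (\ref{98.2}): the convergence of $\Theta^{(k)}$ in $L^2_{loc}(W^{1,2})$ follows from the $L^\infty L^2$ convergence combined with the integral of $\zeta_{k,x}^2$.
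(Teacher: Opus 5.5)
There is a genuine gap in the core stability estimate. You say the mechanical coupling ``must be cancelled'', but that only arises because you integrate by parts to $\io (f(\Theta)-f(\whT))v_{xt}$, and there is no need to. The paper keeps the term as $-\io\{f'(\Theta)\Theta_x-f'(\whT)\whT_x\}v_t$ and splits it into $-\io f'(\whT)\zeta_x v_t$ and $-\io\{f'(\Theta)-f'(\whT)\}\Theta_x v_t$. Since $f'$ and $f''$ are bounded, the first piece is at most $\eta\io\zeta_x^2+C_\eta\io v_t^2$. The second is at most $c\|\Theta_x\|_{L^2}\|\zeta\|_{L^\infty}\|v_t\|_{L^2}$. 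Using $\|\zeta\|_{L^\infty}\le c\|\zeta_x\|_{L^2}^{1/2}\|\zeta\|_{L^2}^{1/2}+c\|\zeta\|_{L^2}$ and Young's inequality, it becomes $\eta\io\zeta_x^2+C_\eta(1+\io\Theta_x^2)(\io v_t^2+\io\zeta^2)$ (Lemma \ref{lem13}). The term $\eta\io\zeta_x^2$ is then absorbed by the dissipation from testing the temperature difference by $\zeta$ itself, which is exactly the ``naive'' test you set out to avoid. The terms produced there are handled by the same one-dimensional interpolation together with $\|\Theta\|_{L^\infty}^2\le c\io\Theta_x^2+c\|\Theta\|_{L^1}^2$. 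Gr\"onwall's lemma with the weight $1+\io\Theta_x^2\in L^1((0,T))$ then closes the estimate (Lemmas \ref{lem14} and \ref{lem21}); no cancellation is needed anywhere.

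The substitute mechanism you propose does not work. Testing the temperature difference by $f(\Theta)-f(\whT)$ yields $-\io f(\Theta)(f(\Theta)-f(\whT))v_{xt}-\io(f(\Theta)-f(\whT))^2\whu_{xt}$. This does not cancel $\io(f(\Theta)-f(\whT))v_{xt}$, since the factor $f(\Theta)$ is off. It also leaves a term with $\whu_{xt}$, which is uncontrolled when only $\whu_t\in L^2$. Fixing the factor means dividing by $f(\Theta)$. That still leaves $\io(f(\Theta)-f(\whT))^2\whu_{xt}/f(\Theta)$, and it degenerates at $\Theta=0$ because $f(0)=0$. For the same reason $\int_0^\Theta ds/f(s)=\infty$ for every $\Theta>0$, since $f(s)\le cs$. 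Moreover, such test functions are not functions of $\zeta$ alone, so $\io\zeta_t\psi$ is not an exact time derivative. Comparing $f(\Theta)-f(\whT)$ with $\zeta$ would also need a positive lower bound on $f'$ over the range of the temperatures, i.e.\ an $L^\infty$ bound on $\Theta$. The class (\ref{reg}) does not provide one, and (\ref{f}) allows $f'\to0$ at infinity; you raise this point but do not resolve it. Hence $y(t)+\int_0^t\io\zeta_x^2\le C(T)y(0)$ is not established by your argument.

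The rest of your plan matches the paper's structure: rigorous justification via the wave energy identity and Steklov averaging, uniform-in-$k$ bounds, and comparing $(u^{(k)},\Theta^{(k)})$ with $(u,\Theta)$. One difference: the paper derives the uniform bounds intrinsically for every solution in the class, via the energy identity (Lemma \ref{lem23}) and Lemma \ref{lem21} applied against the zero solution (Lemma \ref{lem24}). Taking them from the existence construction, as you do, additionally uses uniqueness to identify $S$ with the constructed solutions.
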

{\bf Main ideas.} \quad
  An essential part of our analysis will be related to the identification of appropriate
  frameworks in which inequalities in the style of
  \bea{02}
	& & \hs{-16mm}
	\sup_{t\in (0,T)} \bigg\{
	\io |u_t(\cdot,t)-\whu_t(\cdot,t)|^2
	+ \io |u_x(\cdot,t)-\whu_x(\cdot,t)|^2
	+ \io |\Theta(\cdot,t)-\whT(\cdot,t)|^2 \bigg\}
	+ \int_0^T \io |\Theta_x-\whTx|^2  \nn\\
	&\le& C(K,T) \cdot \bigg\{
	\io |u_{0t}-\whu_{0t}|^2
	+ \io |u_{0x}^2-\whu_{0x}|^2
	+ \io |\Theta_0-\whT_0|^2 
	\bigg\},
  \eea
  formally satisfied by two solutions $(u,\Theta)$ and $(\whu,\whT)$ of (\ref{0}) in $\Om\times (0,T)$ under the assumption that
  \be{03}
	\sup_{t\in (0,T)} \Big\{ \|\whu_t(\cdot,t)\|_{L^2(\Om)} + \|\Theta(\cdot,t)\|_{L^1(\Om)} \Big\}
	+ \int_0^T \io \Theta_x^2 \le K,
  \ee
  can adequately be exploited at rigorous levels.\abs
  In the context of a suitably designed testing procedure, Section \ref{sect3} addresses this for
  arbitrary weak solutions with additional
  regularity properties in the flavor of (\ref{reg}), thus asserting the claims on uniqueness and continuous
  dependence in Theorem \ref{theo99} and Proposition \ref{prop98}.
  Section \ref{sect4} will thereafter be devoted to the construction of solutions in this regularity class,
  where an estimate in the flavor of (\ref{02}) is used in two essential places:
  In a first step concentrating on initial data such that
  \be{04}
	u_0 \in C_0^\infty(\Om),
	\quad
	u_{0t} \in C_0^\infty(\Om),
	\quad 
	\quad \mbox{and} \quad
	\Theta_0 \in C^\infty(\bom)
	\mbox{ is positive in $\bom$ and such that } \Theta_{0x} \in C_0^\infty(\Om),
  \ee
  we shall see that, in line with (\ref{02}), solutions $(\veps,\ueps,\Teps)$ to a parabolic regularization of (\ref{0}),
  known to approximate $(u_t,u,\Theta)$ with a possibly irregular weak solution $(u,\Theta)$ of (\ref{0}) 
  in certain weak topologies (see (\ref{0eps}) and Lemma \ref{lem51}), actually satisfy an $\eps$-independent 
  inequality of the form
  \bea{05}
	& & \hs{-20mm}
	\sup_{t\in (0,T)} \io \big|\veps(\cdot,t+h)-\veps(\cdot,t)\big|^2
	+ \io \big|\uepsx(\cdot,t+h)-\uepsx(\cdot,t)\big|^2
	+ \io \big|\Teps(\cdot,t+h)-\Teps(\cdot,t)\big|^2 \nn\\
	&\le& C(T) \cdot \bigg\{
	\io \big| \veps(\cdot,h)-u_{0t} \big|^2
	+ \io \big| \uepsx(\cdot,h)-u_{0x}\big|^2
	+ \io \big| \Teps(\cdot,h)-\Theta_0\big|^2 \bigg\}
  \eea
  for $h\in (0,1)$ (Lemma \ref{lem10}).
  According to a key observation on a short-time higher-order regularity property valid for initial data fulfilling (\ref{04})
  (Lemma \ref{lem6}), the right-hand side herein can be seen to conveniently vanish in the limit $h\searrow 0$,
  ensuring that for any such data a global solution satisfying (\ref{reg}) indeed exists (Lemma \ref{lem111}).
  In Lemma \ref{lem11}, finally, general initial data merely complying with (\ref{init}) are covered by means
  of an independent second approximation argument, using the property in (\ref{02}) in deriving a Cauchy feature
  of a sequence of approximate solutions that evolve from regularized data.
\mysection{Two preliminary estimates for differences}\label{sect2}
Let us begin by deriving the following lemma which forms 
a common technical preliminary both for core parts of our analysis toward uniqueness and continuous dependence 
(see Lemma \ref{lem14} and Lemma \ref{lem21}) and for our construction of a global solution with the regularity properties
in (\ref{reg}) (cf.~Lemma \ref{lem10}).
\begin{lem}\label{lem13}
  Assume (\ref{f}), and let $\eta>0$ and $K>0$. 
  Then there exists $\Gamma_1(\eta,K)>0$ with the property that if 
  $v\in L^2(\Om), \whv\in L^2(\Om), \Theta\in W^{1,2}(\Om)$ and $\whT\in W^{1,2}(\Om)$ are such that
  $\Theta\ge 0$ and $\whT\ge 0$ as well as
  \be{13.01}
	\|\whv\|_{L^2(\Om)}
	+ \|\Theta\|_{L^1(\Om)}
	\le K,
  \ee
  we have
  \bea{13.1}
	& & \hs{-30mm}
	- \io \big\{ f'(\Theta)\Theta_x - f'(\whT)\whTx\big\} \cdot (v-\whv) \nn\\
	&\le& \eta \io (\Theta_x-\whTx)^2
	+ \Gamma_1(\eta,K) \cdot \bigg\{ 1 + \io \Theta_x^2 \bigg\} \cdot \bigg\{ \io (v-\whv)^2 + \io (\Theta-\whT)^2 \bigg\}
  \eea
  and
  \bea{13.2}
	& & \hs{-20mm}
	\io \big\{ f'(\Theta)\Theta_x v -f'(\whT) \whTx \whv\big\} \cdot (\Theta-\whT)
	+ \io \big\{ f(\Theta) v - f(\whT)\whv\big\} \cdot (\Theta_x -\whTx) \nn\\
	&\le& \eta \io (\Theta_x - \whTx)^2 
	+ \Gamma_1(\eta,K) \cdot \bigg\{ 1 + \io \Theta_x^2 \bigg\} \cdot \bigg\{ \io (v-\whv)^2 + \io (\Theta-\whT)^2 \bigg\}.
  \eea
\end{lem}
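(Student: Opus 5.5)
The plan is to split each integrand into a part carrying the difference $\Theta_x-\whTx$ and a part carrying $\Theta_x$ alone. Every resulting term is then controlled by the one-dimensional Gagliardo–Nirenberg inequality
\[
\|w\|_{L^\infty(\Om)}^2 \le C_{GN}\|w_x\|_{L^2(\Om)}\|w\|_{L^2(\Om)} + C_{GN}\|w\|_{L^2(\Om)}^2, \qquad w\in W^{1,2}(\Om),
\]
applied to $w:=\Theta-\whT$, together with Young's inequality. We also use its $L^1$-variant $\|\Theta\|_{L^\infty(\Om)}\le C(\|\Theta_x\|_{L^2(\Om)}+\|\Theta\|_{L^1(\Om)})$. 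From (\ref{f}) we take $c_1:=\|f'\|_{L^\infty}$ and $L:=\|f''\|_{L^\infty}$, so that $|f'(s)-f'(\hat s)|\le L|s-\hat s|$, $|f(s)-f(\hat s)|\le c_1|s-\hat s|$ and $0\le f(s)\le c_1 s$. Throughout, $w:=\Theta-\whT$ and all norms are $L^2(\Om)$ unless stated otherwise.

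For (\ref{13.1}) I write
\[
f'(\Theta)\Theta_x-f'(\whT)\whTx=f'(\whT)w_x+\big(f'(\Theta)-f'(\whT)\big)\Theta_x .
\]
The first part gives $c_1\|w_x\|\,\|v-\whv\|\le \eta\|w_x\|^2+C\|v-\whv\|^2$. The second part is bounded by $L\|w\|_{L^\infty}\|\Theta_x\|\,\|v-\whv\|$. Inserting Gagliardo–Nirenberg, the critical piece is $\|w_x\|^{1/2}X$ with $X:=\|w\|^{1/2}\|\Theta_x\|\,\|v-\whv\|$. Young's inequality with exponents $4$ and $4/3$ gives $\eta\|w_x\|^2+C X^{4/3}$. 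Moreover
\[
X^{4/3}=\|\Theta_x\|^{4/3}\,\|w\|^{2/3}\,\|v-\whv\|^{4/3}\le (1+\|\Theta_x\|^2)\big(\|w\|^2+\|v-\whv\|^2\big).
\]
The lower-order piece $\|w\|\,\|\Theta_x\|\,\|v-\whv\|$ is handled trivially. Hypothesis (\ref{13.01}) is not even needed for this part.

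For (\ref{13.2}) I decompose the left-hand side into four terms:
\[
A=\io f'(\Theta)\Theta_x(v-\whv)w,\qquad B=\io\big(f'(\Theta)\Theta_x-f'(\whT)\whTx\big)\whv\, w,
\]
\[
C=\io f(\Theta)(v-\whv)w_x,\qquad D=\io\big(f(\Theta)-f(\whT)\big)\whv\, w_x .
\]
Term $A$ is bounded by $c_1\|\Theta_x\|\,\|v-\whv\|\,\|w\|_{L^\infty}$, exactly as before. For $C$ I use $\|f(\Theta)\|_{L^\infty}\le c_1\|\Theta\|_{L^\infty}\le C(\|\Theta_x\|+K)$, which is where $\|\Theta\|_{L^1}\le K$ enters. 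This gives $C\le\eta\|w_x\|^2+C(K)(1+\|\Theta_x\|^2)\|v-\whv\|^2$.

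Since $\whv$ is merely in $L^2$, I will not integrate $B+D=\io\whv\,(gw)_x$ by parts, where $g:=f(\Theta)-f(\whT)$. Instead both terms are estimated using only $\|\whv\|\le K$. For $D$, we have $D\le c_1K\|w\|_{L^\infty}\|w_x\|\le C(K)(\|w_x\|^{3/2}\|w\|^{1/2}+\|w_x\|\,\|w\|)$. This is at most $\eta\|w_x\|^2+C\|w\|^2$ by Young with exponents $4/3$ and $4$. For $B$, I again split $g_x=f'(\whT)w_x+(f'(\Theta)-f'(\whT))\Theta_x$. The first part is treated like $D$. The second part is bounded by $LK\|w\|_{L^\infty}^2\|\Theta_x\|\le C(K)\|\Theta_x\|(\|w_x\|\,\|w\|+\|w\|^2)$, which is at most $\eta\|w_x\|^2+C(1+\|\Theta_x\|^2)\|w\|^2$.

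The main obstacle is this treatment of the $\whv$-terms, for which only $L^2$ control is available. The essential point is that every appearance of $w$ in $L^\infty$ must be paid for with at most a power strictly less than $2$ of $\|w_x\|$, so that Young's inequality can absorb it into $\eta\|w_x\|^2$. The growth factor must moreover never exceed $1+\|\Theta_x\|^2$. The bookkeeping above shows that the exponents close in each case.
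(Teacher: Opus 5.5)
Your proposal is correct and follows essentially the same route as the paper. It uses the same splitting $f'(\Theta)\Theta_x-f'(\whT)\whTx=f'(\whT)(\Theta_x-\whTx)+(f'(\Theta)-f'(\whT))\Theta_x$ and the same decomposition of the left side of (\ref{13.2}) into the $f'(\Theta)\Theta_x(v-\whv)$, $f'(\whT)(\Theta_x-\whTx)\whv$, $(f'(\Theta)-f'(\whT))\Theta_x\whv$, $(f(\Theta)-f(\whT))\whv$ and $f(\Theta)(v-\whv)$ pieces, with each estimated exactly as in the paper via the Gagliardo--Nirenberg bound on $\|\Theta-\whT\|_{L^\infty(\Om)}$, Young's inequality, and the $L^1$-based bound on $\|\Theta\|_{L^\infty(\Om)}$.
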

\proof
  According to a Gagliardo-Nirenberg inequality and the continuity of the embedding $W^{1,2}(\Om) \hra L^\infty(\Om)$,
  we can find $c_1>0$ and $c_2>0$ such that
  \be{13.3}
	\|\psi\|_{L^\infty(\Om)} \le c_1\|\psi_x\|_{L^2(\Om)}^\frac{1}{2} \|\psi\|_{L^2(\Om)}^\frac{1}{2}
	+ c_1\|\psi\|_{L^2(\Om)}
	\qquad \mbox{for all } \psi\in W^{1,2}(\Om)
  \ee
  and
  \be{13.31}
	\|\psi\|_{L^\infty(\Om)}^2 
	\le c_2 \|\psi_x\|_{L^2(\Om)}^2 + c_2\|\psi\|_{L^1(\Om)}^2
	\qquad \mbox{for all } \psi\in W^{1,2}(\Om),
  \ee
  and due to (\ref{f}) it is possible to choose $c_3>0$ and $c_4>0$ such that
  \be{13.32}
	0 \le f'(\xi)\le c_3
	\quad \mbox{and} \quad
	|f''(\xi)| \le c_4
	\qquad \mbox{for all } \xi\ge 0,
  \ee
  and that hence also
  \be{13.33}
	0 \le f(\xi) \le c_3 \xi
	\qquad \mbox{for all } \xi\ge 0.
  \ee
  Given 
  $(v,\Theta) \in L^2(\Om)\times W^{1,2}(\Om)$ and $(\whv,\whT)\in L^2(\Om)\times W^{1,2}(\Om)$, we decompose
  \be{13.4}
	\hs{-4mm}
	- \io \big\{ f'(\Theta)\Theta_x - f'(\whT)\whTx\big\} \cdot (v-\whv)
	= - \io f'(\whT) (\Theta_x-\whTx) (v-\whv)
	- \io \big\{ f'(\Theta)-f'(\whT)\big\} \Theta_x (v-\whv),
  \ee
  where 
  we use Young's inequality together with (\ref{13.32}) to see that
  \bea{13.5}
	- \io f'(\whT) (\Theta_x-\whTx) (v-\whv)
	&\le& \frac{\eta}{2} \io (\Theta_x-\whTx)^2
	+ \frac{1}{2\eta} \io f'^2(\whT) (v-\whv)^2 \nn\\
	&\le& \frac{\eta}{2} \io (\Theta_x-\whTx)^2
	+ \frac{c_3^2}{2\eta} \io (v-\whv)^2 
  \eea
  and, by the mean value theorem and (\ref{13.3}),
  \bea{13.6}
	& & \hs{-30mm}
	- \io \big\{ f'(\Theta)-f'(\whT)\big\} \Theta_x (v-\whv) \nn\\
	&\le& c_4 \io |\Theta-\whT| \cdot |\Theta_x| \cdot |v-\whv| \nn\\
	&\le& c_4 \|\Theta_x\|_{L^2(\Om)} \|\Theta-\whT\|_{L^\infty(\Om)} \|v-\whv\|_{L^2(\Om)} \nn\\
	&\le& c_1 c_4 \|\Theta_x\|_{L^2(\Om)} \|\Theta_x-\whTx\|_{L^2(\Om)}^\frac{1}{2} \|\Theta-\whT\|_{L^2(\Om)}^\frac{1}{2}
		\|v-\whv\|_{L^2(\Om)} \nn\\
	& & + c_1 c_4 \|\Theta_x\|_{L^2(\Om)} \|\Theta-\whT\|_{L^2(\Om)} \|v-\whv\|_{L^2(\Om)} \nn\\
	&=& \bigg\{ \frac{\eta}{2} \io (\Theta_x-\whTx)^2 \bigg\}^\frac{1}{4} 
		\cdot \Big(\frac{2}{\eta}\Big)^\frac{1}{4} c_1 c_4 \|\Theta_x\|_{L^2(\Om)} \|\Theta-\whT\|_{L^2(\Om)}^\frac{1}{2}
		\|v-\whv\|_{L^2(\Om)} \nn\\
	& & + c_1 c_4 \|\Theta_x\|_{L^2(\Om)} \|\Theta-\whT\|_{L^2(\Om)} \|v-\whv\|_{L^2(\Om)} \nn\\
	&\le& \frac{\eta}{2} \io (\Theta_x-\whTx)^2
	+ c_5(\eta) \|\Theta_x\|_{L^2(\Om)}^\frac{4}{3} \|\Theta-\whT\|_{L^2(\Om)}^\frac{2}{3} \|v-\whv\|_{L^2(\Om)}^\frac{4}{3} \nn\\
	& & + c_1 c_4 \|\Theta_x\|_{L^2(\Om)} \|\Theta-\whT\|_{L^2(\Om)} \|v-\whv\|_{L^2(\Om)} 
  \eea
  with $c_5(\eta):=(\frac{2}{\eta})^\frac{1}{3} \cdot (c_1 c_4)^\frac{4}{3}$.
  Since Young's inequality moreover ensures that
  \be{13.66}
	\|\Theta-\whT\|_{L^2(\Om)}^\iota \|v-\whv\|_{L^2(\Om)}^{2-\iota}
	\le \|\Theta-\whT\|_{L^2(\Om)}^2 + \|v-\whv\|_{L^2(\Om)}^2
	\qquad \mbox{for all } \iota\in (0,2)
  \ee
  as well as
  \be{13.67}
	\|\Theta_x\|_{L^2(\Om)}^\iota
	\le 1 + \|\Theta_x\|_{L^2(\Om)}^2 
	\qquad \mbox{for all } \iota\in (0,2),
  \ee
  from (\ref{13.4})-(\ref{13.6}) we obtain that
  \bea{13.7}
	& & \hs{-14mm}
	- \io \big\{ f'(\Theta)\Theta_x - f'(\whT)\whTx\big\} \cdot (v-\whv) \nn\\
	&\le& \Big(\frac{\eta}{2}+\frac{\eta}{2}\Big) \io (\Theta_x-\whTx)^2 
	+ \bigg\{ \frac{c_3^2}{2\eta} + c_5(\eta) + c_1 c_4 + (c_5(\eta)+c_1 c_4) \io \Theta_x^2 \bigg\} \cdot
		\io (v-\whv)^2 \nn\\
	& & 
	+ \bigg\{ c_5(\eta) + c_1 c_4 + (c_5(\eta)+c_1 c_4) \io \Theta_x^2 \bigg\} \cdot
		\io (\Theta-\whT)^2.
  \eea
  Next addressing (\ref{13.2}), we first rewrite
  \bea{13.8}
	& & \hs{-30mm}
	\io \big\{ f'(\Theta)\Theta_x v -f'(\whT) \whTx \whv\big\} \cdot (\Theta-\whT) \nn\\
	&=& \io f'(\Theta) \Theta_x (v-\whv) (\Theta-\whT)
	+ \io \big\{ f'(\Theta)-f'(\whT)\big\} \Theta_x \whv (\Theta-\whT) \nn\\
	& & + \io f'(\whT) (\Theta_x-\whTx) \whv (\Theta-\whT)
  \eea
  and again employ (\ref{13.32}), (\ref{13.3}), Young's inequality, (\ref{13.66}) and (\ref{13.67}) to estimate
  \bea{13.9}
	& & \hs{-18mm}
	\io f'(\Theta) \Theta_x (v-\whv)(\Theta-\whT) \nn\\
	&\le& c_3 \|\Theta_x\|_{L^2(\Om)} \|v-\whv\|_{L^2(\Om)} \|\Theta-\whT\|_{L^\infty(\Om)} \nn\\
	&\le& c_1 c_3 \|\Theta_x\|_{L^2(\Om)} \|v-\whv\|_{L^2(\Om)} 
		\|\Theta_x-\whTx\|_{L^2(\Om)}^\frac{1}{2} \|\Theta-\whT\|_{L^2(\Om)}^\frac{1}{2} \nn\\
	& & + c_1 c_3 \|\Theta_x\|_{L^2(\Om)} \|v-\whv\|_{L^2(\Om)} 
		\|\Theta-\whT\|_{L^2(\Om)} \nn\\
	&=& \bigg\{ \frac{\eta}{6} \io (\Theta_x-\whTx)^2 \bigg\}^\frac{1}{4} \cdot \Big(\frac{6}{\eta}\Big)^\frac{1}{4}
		c_1 c_3 \|\Theta_x\|_{L^2(\Om)} \|v-\whv\|_{L^2(\Om)} \|\Theta-\whT\|_{L^2(\Om)}^\frac{1}{2} \nn\\
	& & + c_1 c_3 \|\Theta_x\|_{L^2(\Om)} \|v-\whv\|_{L^2(\Om)} \|\Theta-\whT\|_{L^2(\Om)} \nn\\
	&\le& \frac{\eta}{6} \io (\Theta_x-\whTx)^2
	+ c_6(\eta) \|\Theta_x\|_{L^2(\Om)}^\frac{4}{3} \|v-\whv\|_{L^2(\Om)}^\frac{4}{3} \|\Theta-\whT\|_{L^2(\Om)}^\frac{2}{3} \nn\\
	& & + c_1 c_3 \|\Theta_x\|_{L^2(\Om)} \|v-\whv\|_{L^2(\Om)} \|\Theta-\whT\|_{L^2(\Om)} \nn\\
	&\le& \frac{\eta}{6} \io (\Theta_x-\whTx)^2
	+ (c_6(\eta)+c_1 c_3) 
	\cdot \bigg\{ 1 + \io \Theta_x^2 \bigg\} \cdot \bigg\{ \io (v-\whv)^2 + \io (\Theta-\whT)^2 \bigg\}
  \eea
  with $c_6(\eta):=(\frac{6}{\eta})^\frac{1}{3} \cdot (c_1 c_3)^\frac{4}{3}$.
  Relying on (\ref{13.01}) now, we moreover find that again due to (\ref{13.3}) and Young's inequality,
  \bea{13.10}
	& & \hs{-12mm}
	\io f'(\whT)(\Theta_x-\whTx) \whv (\Theta-\whT) \nn\\
	&\le& c_3 K \|\Theta_x-\whTx\|_{L^2(\Om)} \|\Theta-\whT\|_{L^\infty(\Om)} \nn\\
	&\le& c_1 c_3 K \|\Theta_x-\whTx\|_{L^2(\Om)}^\frac{3}{2} \|\Theta-\whT\|_{L^2(\Om)}^\frac{1}{2}
	+ c_1 c_3 K \|\Theta_x-\whTx\|_{L^2(\Om)} \|\Theta-\whT\|_{L^2(\Om)} \nn\\
	&=& \bigg\{ \frac{\eta}{12} \io (\Theta_x-\whTx)^2 \bigg\}^\frac{3}{4} 
		\cdot \Big(\frac{12}{\eta}\Big)^\frac{3}{4} c_1 c_3 K \|\Theta-\whT\|_{L^2(\Om)}^\frac{1}{2}
	+ c_1 c_3 K \|\Theta_x-\whTx\|_{L^2(\Om)} \|\Theta-\whT\|_{L^2(\Om)} \nn\\
	&\le& 
	\frac{\eta}{12} \io (\Theta_x-\whTx)^2
	+ \bigg\{ \Big(\frac{12}{\eta}\Big)^\frac{3}{4} c_1 c_3 K \|\Theta-\whT\|_{L^2(\Om)}^\frac{1}{2} \bigg\}^4 
	\nn\\
	& & 
	+ \frac{\eta}{12} \|\Theta_x-\whTx\|_{L^2(\Om)}^2
	+ \frac{(c_1 c_3 K)^2}{4\cdot \frac{\eta}{12}} \cdot \|\Theta-\whT\|_{L^2(\Om)}^2 
	\nn\\
	&=& \frac{\eta}{6} \io (\Theta_x-\whTx)^2
	+ c_7(\eta,K) \io (\Theta-\whT)^2
  \eea
  with $c_7(\eta,K):=(\frac{12}{\eta})^3 \cdot (c_1 c_3 K)^4 + \frac{3c_1^2 c_3^2 K^2}{\eta}$,
  while once more by the mean value theorem, (\ref{13.3}), Young's inequality and (\ref{13.67}),
  \bas
	& & \hs{-12mm}
	\io \big\{ f'(\Theta)-f'(\whT)\big\} \Theta_x \whv (\Theta-\whT) \nn\\
	&\le& c_4 \io |\Theta_x| \cdot |\whv| \cdot |\Theta-\whT|^2 \\
	&\le& c_4 K \|\Theta_x\|_{L^2(\Om)} \|\Theta-\whT\|_{L^\infty(\Om)}^2 \\
	&\le& 2c_1^2 c_4 K \|\Theta_x\|_{L^2(\Om)} \|\Theta_x-\whTx\|_{L^2(\Om)} \|\Theta-\whT\|_{L^2(\Om)}
	+ 2c_1^2 c_4 K \|\Theta_x\|_{L^2(\Om)} \|\Theta-\whT\|_{L^2(\Om)}^2 \\
	&\le& \frac{\eta}{6} \io (\Theta_x-\whTx)^2
	+ \frac{6c_1^4 c_4^2 K^2}{\eta} \cdot \bigg\{ \io \Theta_x^2 \bigg\} \cdot \io (\Theta-\whT)^2 \\
	& & + 2c_1^2 c_4 K \|\Theta_x\|_{L^2(\Om)} \|\Theta-\whT\|_{L^2(\Om)}^2 \\
	&\le& \frac{\eta}{6} \io (\Theta_x-\whTx)^2
	+ c_8(\eta,K) \cdot \bigg\{ 1 + \io \Theta_x^2 \bigg\} \cdot \io (\Theta-\whT)^2
  \eas
  if we let $c_8(\eta,K):=\frac{6c_1^2 c_4^2 K^2}{\eta} + 2c_1^2 c_4 K$, for instance.
  Therefore, (\ref{13.8})-(\ref{13.10}) ensure that
  \bea{13.11}
	& & \hs{-20mm}
	\io \big\{ f'(\Theta)\Theta_x v - f'(\whT)\whTx \whv\big\} \cdot (\Theta-\whT) \nn\\
	&\le& \!\!\! \Big(\frac{\eta}{6}+\frac{\eta}{6}+\frac{\eta}{6}\Big) \io (\Theta_x-\whTx)^2 \nn\\
	& & \!\!\! + \big\{ c_6(\eta)+ c_1 c_3 + c_7(\eta,K)+c_8(\eta,K)\big\} \cdot \bigg\{ 1 + \io \Theta_x^2 \bigg\} 
		\cdot 
	\bigg\{ \io (\Theta-\whT)^2
	+ \io (v-\whv)^2 \bigg\},
  \eea
  so that it remains to suitably estimate
  \bea{13.12}
	\hs{-4mm}
	\io \big\{ f(\Theta) v - f(\whT)\whv\big\} \cdot (\Theta_x-\whTx)
	= \io \big\{ f(\Theta)-f(\whT)\big\} \whv (\Theta_x-\whTx) 
	+ \io f(\Theta)(v-\whv)(\Theta_x-\whTx).
  \eea
  Indeed, the mean value theorem, (\ref{13.01}), (\ref{13.3}) and Young's inequality guarantee that
  \bas
	& & \hs{-24mm}
	\io \big\{ f(\Theta)-f(\whT)\big\} \whv (\Theta_x-\whTx) \nn\\
	&\le& c_3 \io |\Theta-\whT| \cdot |\whv| \cdot |\Theta_x-\whTx| \\
	&\le& c_3 K \|\Theta-\whT\|_{L^\infty(\Om)} \|\Theta_x-\whTx\|_{L^2(\Om)} \\
	&\le& c_1 c_3 K \|\Theta_x-\whTx\|_{L^2(\Om)}^\frac{3}{2} \|\Theta-\whT\|_{L^2(\Om)}^\frac{1}{2}
	+ c_1 c_3 K \|\Theta-\whT\|_{L^2(\Om)} \|\Theta_x-\whTx\|_{L^2(\Om)} \\
	&=& \bigg\{ \frac{\eta}{8} \io (\Theta_x-\whTx)^2 \bigg\}^\frac{3}{4} \cdot 
		\Big(\frac{8}{\eta}\Big)^\frac{3}{4} c_1 c_3 K \|\Theta-\whT\|_{L^2(\Om)}^\frac{1}{2} \\
	& & + c_1 c_3 K \|\Theta-\whT\|_{L^2(\Om)} \|\Theta_x-\whTx\|_{L^2(\Om)} \\
	&\le&
	\frac{\eta}{8} \io (\Theta_x-\whTx)^2 
	+ \bigg\{ \Big(\frac{8}{\eta}\Big)^\frac{3}{4} c_1 c_3 K \|\Theta-\whT\|_{L^2(\Om)}^\frac{1}{2} \bigg\}^4 \\
	& &
	+ \frac{\eta}{8} \|\Theta_x-\whTx\|_{L^2(\Om)}^2
	+ \frac{(c_1 c_3 K)^2}{4\cdot\frac{\eta}{8}} \cdot \|\Theta-\whT\|_{L^2(\Om)}^2 \\
	&=& \frac{\eta}{4} \io (\Theta_x-\whTx)^2
	+ c_9(\eta,K) \io (\Theta-\whT)^2
  \eas
  with $c_9(\eta,K):=(\frac{8}{\eta})^3 \cdot (c_1 c_3 K)^4 + \frac{2c_1^2 c_3^2 K^2}{\eta}$, whereas
  thanks to the mean value theorem, (\ref{13.31}) and (\ref{13.01}),
  \bas
	\io f(\Theta) (v-\whv) (\Theta_x-\whTx)
	&\le& c_3 \io \Theta |v-\whv| \cdot |\Theta_x-\whTx| \\
	&\le& c_3 \|\Theta\|_{L^\infty(\Om)} \|v-\whv\|_{L^2(\Om)} \|\Theta_x-\whTx\|_{L^2(\Om)} \\
	&\le& \frac{\eta}{4} \io (\Theta_x-\whTx)^2
	+ \frac{c_3^2}{\eta} \|\Theta\|_{L^\infty(\Om)}^2 \io (v-\whv)^2 \\
	&\le& \frac{\eta}{4} \io (\Theta_x-\whTx)^2
	+ \frac{c_2 c_3^2}{\eta} \cdot \bigg\{ K^2 + \io \Theta_x^2 \bigg\} \cdot \io (v-\whv)^2.
  \eas
  Accordingly, (\ref{13.12}) implies that
  \bas
	& & \hs{-24mm}
	\io \big\{ f(\Theta) v - f(\whT)\whv\big\} \cdot (\Theta_x-\whTx) \\
	&\le& \Big(\frac{\eta}{4}+\frac{\eta}{4}\Big) \cdot \io (\Theta_x-\whTx)^2 \\
	& & + \bigg\{ c_9(\eta,K) + \frac{c_2 c_3^2 K^2}{\eta} + \frac{c_2 c_3^2}{\eta} \io \Theta_x^2 \bigg\} \cdot \bigg\{
		\io (v-\whv)^2 + \io (\Theta-\whT)^2 \bigg\},
  \eas
  and hence, after having been added to (\ref{13.11}) and combined with (\ref{13.7}), shows that both (\ref{13.1}) and
  (\ref{13.2}) hold if $\Gamma_1(\eta,K)$ is appropriately large.
\qed
\mysection{Uniqueness and continuous dependence}\label{sect3}
This section will mainly be devoted to those parts of our main results which are related to uniqueness of solutions and their
continuous dependence on the initial data. By providing statements which go slightly beyond the respective
formulations in Theorem \ref{theo99} and Proposition \ref{prop98}, however, we can moreover accomplish a key step
toward our proof of existence in the next section (see Lemma \ref{lem97}). 
For this purpose, it will be technically convenient to here examine solutions consistent with the requirements
in (\ref{reg}) but defined on time intervals of finite length only, thus assuming that
\be{99.1}
	\lbal
	u\in C^0([0,T];W_0^{1,2}(\Om)), \\[1mm]
	u_t \in C^0([0,T];L^2(\Om))
	\qquad \mbox{and} \\[1mm]
	\Theta\in C^0([0,T];L^2(\Om)) \cap L^2((0,T);W^{1,2}(\Om)).
	\ear
\ee
Our first step in the analysis of two different solutions complying with these hypothesis concentrates on the 
parabolic subsystem of (\ref{0}), and utilizes a suitably arranged testing procedure therefor to estimate
differences as follows.
\begin{lem}\label{lem14}
  Assume (\ref{f}), and let $K>0$.
  Then there exists $\Gamma_2(K)>0$ such that if $(u_0,u_{0t},\Theta_0)$ and $(\whu_0,\whu_{0t},\whT_0)$
  satisfy (\ref{init}), and if $T>0$ and $(u,\Theta)$ as well as $(\whu,\whT)$ are weak solutions of (\ref{0}) in
  $\Om\times (0,T)$ with initial data $(u_0,u_{0t},\Theta_0)$ and $(\whu_0,\whu_{0t},\whT_0)$, respectively,
  which both have the regularity properties in (\ref{99.1}) and moreover satisfy
  \be{14.01}
	\|\whu_t(\cdot,t)\|_{L^2(\Om)}
	+ \|\Theta(\cdot,t)\|_{L^1(\Om)}
	\le K
	\qquad \mbox{for all } t\in (0,T),
  \ee
  then 
  \bea{14.2}
	& & \hs{-20mm}
	\io |\Theta(\cdot,t)-\whT(\cdot,t)|^2
	+ \int_0^t \io (\Theta_x-\whTx)^2 \nn\\
	&\le& \io |\Theta_0-\whT_0|^2 \nn\\
	& & + \Gamma_2(K) \cdot \int_0^t \bigg\{ 1 + \io \Theta_x^2(\cdot,s) \bigg\} \cdot \bigg\{
		\io | u_t(\cdot,s)-\whu_t(\cdot,s)|^2
		+ \io |\Theta(\cdot,s)-\whT(\cdot,s)|^2 \bigg\} ds
  \eea
  for all $t\in (0,T)$.
\end{lem}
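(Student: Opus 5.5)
The plan is to test the difference of the two temperature equations against $\Theta-\whT$ and then invoke (\ref{13.2}) from Lemma \ref{lem13}. Put $v:=u_t$, $\whv:=\whu_t$ and $w:=\Theta-\whT$. Subtracting the weak formulations (\ref{wt}) for $\Theta$ and $\whT$ formally gives
\bas
	\frac{1}{2}\frac{d}{dt}\io w^2 + \io w_x^2
	= \io \big\{ f'(\Theta)\Theta_x v - f'(\whT)\whTx\whv\big\} w
	+ \io \big\{ f(\Theta)v - f(\whT)\whv\big\} w_x .
\eas
For a.e.~$s\in(0,T)$ we have $\Theta(\cdot,s),\whT(\cdot,s)\in W^{1,2}(\Om)$, both nonnegative, and $v(\cdot,s),\whv(\cdot,s)\in L^2(\Om)$. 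Moreover (\ref{14.01}) yields exactly (\ref{13.01}). We may therefore apply (\ref{13.2}) with $\eta:=\frac{1}{2}$. This bounds the right-hand side by $\frac{1}{2}\io w_x^2 + \Gamma_1(\frac{1}{2},K)\{1+\io\Theta_x^2\}\{\io(v-\whv)^2+\io w^2\}$. We absorb the first term, multiply by $2$ and integrate over $(0,t)$. This gives (\ref{14.2}) with $\Gamma_2(K):=2\Gamma_1(\frac{1}{2},K)$, and the dissipative term even appears with coefficient $1$.

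The main obstacle is justifying this testing rigorously. The solutions only have the regularity (\ref{99.1}), so $w$ is not an admissible test function in (\ref{wt}), and $w_t$ need not exist as a function. I would proceed in three steps.

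\textbf{Step 1 (time derivative in a dual space).} From (\ref{wt}) I read off that $\Theta_t\in L^1((0,T);(W^{1,2}(\Om))^\star)$ plus a part with better integrability. Because of the right-hand side terms this needs care. First, $f(\Theta)u_t\in L^2((0,T);L^2(\Om))$: indeed $|f(\Theta)|\le c_3\Theta$ with $\Theta\in L^2((0,T);L^\infty(\Om))$ by (\ref{13.31}), and $u_t\in L^\infty((0,T);L^2(\Om))$. Second, $f'(\Theta)\Theta_x u_t\in L^1((0,T);L^1(\Om))$, which acts on $L^\infty$ test functions and hence pairs with $W^{1,2}(\Om)\hra L^\infty(\Om)$.

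\textbf{Step 2 (time regularization).} Rather than develop a full duality theory, I would use Steklov averages or a mollification in time. Concretely, I take test functions $\vp(x,s)=\zeta(s)\,(w_h)(x,s)$, where $w_h$ is a time-mollified version of $w$ (smooth in time with values in $W^{1,2}(\Om)$) and $\zeta$ is a cutoff approximating $\chi_{(0,t)}$. Such functions are admissible in (\ref{wt}) after a density argument. The density argument works because every integrand in (\ref{wt}) is controlled for $\vp\in L^\infty((0,T);W^{1,2}(\Om))$ with $\vp_t\in L^1((0,T);L^2(\Om))$. The needed bounds are $\Theta\in C^0([0,T];L^2(\Om))$, $\Theta_x\in L^2$, and the integrabilities from Step 1.

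\textbf{Step 3 (passing to the limit).} I let the mollification parameter tend to zero. The bilinear terms converge, for instance $\int\io f'(\Theta)\Theta_x u_t\,w_h\to\int\io f'(\Theta)\Theta_x u_t\,w$. This uses that $w_h\to w$ in $L^2((0,T);W^{1,2}(\Om))\subset L^2((0,T);L^\infty(\Om))$, combined with $f'(\Theta)\Theta_x u_t\in L^2((0,T);L^1(\Om))$, and the latter holds since $\Theta_x\in L^2$ in space-time and $u_t\in L^\infty((0,T);L^2(\Om))$. The time-derivative term produces $\frac{1}{2}\io w^2(\cdot,t)-\frac{1}{2}\io w_0^2$ in the standard way, thanks to $w\in C^0([0,T];L^2(\Om))$. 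Initial values are recovered by the continuity at $t=0$ together with the initial term in (\ref{wt}).

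This yields the integrated identity
\bas
	\frac{1}{2}\io w^2(\cdot,t) + \int_0^t\io w_x^2 = \frac{1}{2}\io w_0^2 + \int_0^t (\mbox{right-hand side}),
\eas
and Lemma \ref{lem13} then finishes the argument pointwise in $s$.
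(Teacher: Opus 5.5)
Your proposal is correct and takes the same route as the paper. The paper subtracts the two versions of (\ref{wt}), tests with $\zd(t)\,(S_h d)(x,t)$, where $S_h$ is the forward Steklov average (which conveniently avoids any need to mollify near $t=0$) and $\zd$ is a piecewise linear cutoff at $t_0$, lets $h\searrow 0$ and then $\del\searrow 0$, and finally applies (\ref{13.2}) with $\eta=\frac{1}{2}$ to obtain $\Gamma_2(K)=2\Gamma_1(\frac{1}{2},K)$; the only difference is that the paper derives just the inequality, bounding the difference-quotient term from below via Young's inequality, rather than the identity you claim, and that inequality is all that is needed.
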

\proof
  For convenience in notation, we let $(u,\Theta)(x,t):=(0,0)$ and $(\whu,\whT)(x,t):=(0,0)$ for $(x,t)\in \Om\times (T,\infty)$,
  and write $d:=\Theta-\whT$ as well as $d_0:=\Theta_0-\whT_0$.
  Twice applying (\ref{wt}), we obtain that for each $\vp\in C_0^\infty(\bom\times [0,\infty))$
  with $\supp\vp\subset \bom\times [0,T)$,
  \be{14.3}
	- \int_0^\infty \io d\vp_t 
	- \io d_0\vp(\cdot,0)
	= - \int_0^\infty \io d_x \vp_x
	+ \int_0^\infty \io g_1 \vp
	+ \int_0^\infty \io g_2 \vp_x,
  \ee
  where
  \be{14.4}
	g_1(x,t):=\lball
	\big\{ f'(\Theta) \Theta_x u_t - f'(\whT)\whTx\whu_t \big\} (x,t),
	\qquad & (x,t)\in \Om \times (0,T), \\[1mm]
	0,
	& (x,t)\in\Om\times (T,\infty),
	\ear
  \ee
  and
  \be{14.5}
	g_2(x,t):=\lball
	\big\{ f(\Theta) u_t - f(\whT)\whu_t \big\} (x,t),
	\qquad & (x,t)\in \Om \times (0,T), \\[1mm]
	0,
	& (x,t)\in\Om\times (T,\infty).
	\ear
  \ee
  Here we note that according to (\ref{99.1}) and (\ref{f}) we have
  \be{14.41}
	\{ d,d_x,g_2\} \subset L^2(\Om\times (0,\infty))
  \ee
  and
  \be{14.42}
	g_1 \in L^2((0,\infty);L^1(\Om)),
  \ee
  whence by a straightforward approximation argument it follows that (\ref{14.3}) extends so as to remain valid actually for any
  $\vp\in L^2((0,\infty);W^{1,2}(\Om)) \hra L^2((0,\infty);L^\infty(\Om))$ 
  which additionally satisfies $\vp_t\in L^2(\Om\times (0,\infty))$ and
  $\vp=0$ a.e.~in $\Om\times (T_0,\infty)$ with some $T_0\in (0,T)$.
  For fixed $t_0\in (0,T), \del\in (0,T-t_0)$ and $h\in (0,T-t_0-\del)$, we may therefore apply (\ref{14.3}) to
  \bas
	\vp(x,t)\equiv \vp_{\del, h}(x,t) := \zd(t) \cdot (S_h d)(x,t),
	\qquad (x,t)\in\Om\times (0,\infty),
  \eas
  where $\zd$ is the piecewise linear function on $\R$ satisfying $\zd\equiv 1$ on $(-\infty,t_0]$,
  $\zd\equiv 0$ on $[t_0+\del,\infty)$ and $\zd'\equiv -\frac{1}{\del}$ on $(t_0,t_0+\del)$, and where the
  Steklov averages
  $S_h \phi$ of $\phi\in L^1(\Om\times (0,\infty))$ are defined in a classical manner by letting
  \be{14.55}
	(S_h \phi)(x,t):=\frac{1}{h} \int_t^{t+h} \phi(x,s) ds,
	\qquad (x,s)\in\Om\times (0,\infty).
  \ee
  We thereby obtain the identity
  \bea{14.6}
	& & \hs{-30mm}
	\frac{1}{\del} \int_{t_0}^{t_0+\del} \io d(x,t) (S_h d)(x,t) dxdt
	- \int_0^\infty \io \zd(t) d(x,t) \cdot \frac{d(x,t+h)-d(x,t)}{h} dxdt \nn\\
	& & \hs{-20mm}
	- \io d_0(x) (S_h d)(x,0) dx \nn\\
	&=& - \int_0^\infty \io \zd(t) d_x(x,t) (S_h d_x)(x,t) dxdt \nn\\
	& & + \int_0^\infty \io \zd(t) g_1(x,t) (S_h d)(x,t) dxdt \nn\\
	& & + \int_0^\infty \io \zd(t) g_2(x,t) (S_h d_x)(x,t) dxdt
  \eea
  for all $\del\in (0,T-t_0)$ and $h\in (0,T-t_0-\del)$, 
  where we observe that from (\ref{14.41}) and standard stabilization properties of
  Steklov averaging operators (\cite[Chapter 3-(i)]{dibenedetto})
  it readily follows that
  \be{14.7}
	S_h d_x \to d_x
	\ \mbox{in } L^2(\Om\times (0,\infty))
	\qquad \mbox{as } h\searrow 0,
  \ee
  and that, again since $L^2((0,\infty);W^{1,2}(\Om)) \hra L^2((0,\infty);L^\infty(\Om))$, also
  \be{14.8}
	S_h d \to d
	\ \mbox{in } L^2((0,\infty);L^\infty(\Om))
	\qquad \mbox{as } h\searrow 0,
  \ee
  while directly from (\ref{14.55}) and the fact that 
  \be{14.80}
	d(\cdot,t) \to d_0
	\ \mbox{in } L^2(\Om)
	\qquad \mbox{as } t\searrow 0,
  \ee
  as clearly asserted by (\ref{99.1}),
  we obtain that
  \be{14.81}
	(S_h d)(\cdot,0) \to d_0
	\ \mbox{in } L^2(\Om)
	\qquad \mbox{as } h\searrow 0.
  \ee
  Since furthermore, by Young's inequality, a linear substitution and the fact that $\zd\equiv 1$ on $(-\infty,0)$ for 
  all $\del\in (0,T-t_0)$,
  \bas
	& & \hs{-16mm}
	- \int_0^\infty \io \zd(t) d(x,t) \cdot \frac{d(x,t+h)-d(x,t)}{h} dxdt \\
	&=& - \frac{1}{h} \int_0^\infty \io \zd(t) d(x,t) d(x,t+h) dxdt
	+ \frac{1}{h} \int_0^\infty \io \zd(t) d^2(x,t) dxdt \\
	&\ge& - \frac{1}{2h} \int_0^\infty \io \zd(t) d^2(x,t+h) dxdt
	+ \frac{1}{2h} \int_0^\infty \io \zd(t) d^2(x,t) dxdt \\
	&=& - \frac{1}{2h} \int_h^\infty \io \zd(t'-h) d^2(x,t') dxdt'
	+ \frac{1}{2h} \int_0^\infty \io \zd(t) d^2(x,t) dxdt \\
	&=&  \frac{1}{2h} \int_0^h \io d^2(x,t') dxdt' \nn\\
	& & - \frac{1}{2h} \int_0^\infty \io \zd(t'-h) d^2(x,t') dxdt'
	+ \frac{1}{2h} \int_0^\infty \io \zd(t) d^2(x,t) dxdt \\
	&=&  \frac{1}{2h} \int_0^h \io d^2(x,t') dxdt' \\
	& & + \frac{1}{2} \int_0^\infty \io \frac{\zd(t)-\zd(t-h)}{h} d^2(x,t) dxdt
	\qquad \mbox{for all $\del\in (0,T-t_0)$ and } h\in (0,T-t_0-\del),
  \eas
  and since $\frac{\zd(\cdot)-\zd(\cdot-h)}{h} \wsto \zd'$ in $L^\infty((0,\infty))$ as $h\searrow 0$, combining (\ref{14.7}),
  (\ref{14.8}) and (\ref{14.81}) with, again, (\ref{14.80}), (\ref{14.41}) and (\ref{14.42}), 
  from (\ref{14.6}) we infer on letting $h\searrow 0$ that
  \bas
	\frac{1}{2\del} \int_{t_0}^{t_0+\del} \io d^2
	- \frac{1}{2} \io d_0^2
	&=& \frac{1}{\del} \int_{t_0}^{t_0+\del} \io d^2
	+ \bigg\{ \frac{1}{2} \io d_0^2 - 
	\frac{1}{2\del} \int_{t_0}^{t_0+\del} \io d^2
	\bigg\}
	- \io d_0^2 \\
	&\le& - \int_0^\infty \io \zd(t) d_x^2(x,t) dxdt
	+ \int_0^\infty \io \zd(t) g_1(x,t) d(x,t) dxdt \\
	& & + \int_0^\infty \io \zd(t) g_2(x,t) d_x(x,t) dxdt
	\qquad \mbox{for all } \del\in (0,T-t_0).
  \eas
  As $0\le t\mapsto \io d^2(\cdot,t)$ is continuous at $t=t_0$ by (\ref{99.1}), again in view of (\ref{14.41}) and (\ref{14.42})
  we may let $\del\searrow 0$ here to see that thus
  \be{14.9}
	\frac{1}{2} \io d^2(\cdot,t_0)
	+ \int_0^{t_0} \io d_x^2
	\le 
	\frac{1}{2} \io d_0^2
	+ \int_0^{t_0} \io g_1 d
	+ \int_0^{t_0} \io g_2 d_x.
  \ee
  Here, on the basis of (\ref{14.01})
  we may rely on Lemma \ref{lem13} to see that if we let $c_1\equiv c_1(K):=\Gamma_1(\frac{1}{2},K)$ with $\Gamma_1(\cdot,\cdot)$
  as found there, then
  \bas
	& & \hs{-20mm}
	\int_0^{t_0} \io g_1 d
	+ \int_0^{t_0} \io g_2 d_x \\
	&=& \int_0^{t_0} \io \big\{ f'(\Theta) \Theta_x u_t - f'(\whT) \whTx\whu_t\big\} \cdot (\Theta-\whT) 
	+ \int_0^{t_0} \io \big\{ f(\Theta)u_t - f(\whT) \whu_t\big\} \cdot (\Theta_x-\whTx) \\
	&\le& \frac{1}{2} \int_0^{t_0} \io (\Theta_x-\whTx)^2 \\
	& & + c_1 \int_0^{t_0} \bigg\{ 1 + \io \Theta_x^2(\cdot,s) \bigg\} \cdot 
		\bigg\{ \io |u_t(\cdot,s)-\whu_t(\cdot,s)|^2
		+ \io |\Theta(\cdot,s)-\whT(\cdot,s)|^2 \bigg\} ds.
  \eas
  As $t_0\in (0,T)$ was arbitrary, together with (\ref{14.9}) this yields (\ref{14.2}) with $\Gamma_2(K):=2c_1$.
\qed
Our derivation of corresponding features in the hyperbolic part of (\ref{0}) will use the following 
basic observation on validity of energy identities along weak trajectories in forced linear wave equations,
as recorded in \cite[Lemma 5.2]{win_existence}.
\begin{lem}\label{lem877}
  Let $T>0$, $g\in L^1((0,T);L^2(\Om))$, $u_0\in W_0^{1,2}(\Om)$ and $u_{0t}\in L^2(\Om)$, and suppose that
  \be{8.1}
	u\in C^0([0,T);L^2(\Om)) \cap L^\infty((0,T);W_0^{1,2}(\Om))
	\qquad \mbox{is such that} \qquad
	u_t \in L^\infty((0,T);L^2(\Om)),
  \ee
  and that for all $\vp\in C_0^\infty(\Om\times [0,T))$,
  \be{8.3}
	\int_0^T \io u\vp_{tt} - \io u_{0t} \vp(\cdot,0) + \io u_0 \vp_t(\cdot,0)
	= - \int_0^T \io u_x \vp_x
	+ \int_0^T \io g\vp.
  \ee
  Then 
  \be{877.4}
	\frac{1}{2} \io u_t^2(\cdot,t) 
	+ \frac{1}{2} \io u_x^2(\cdot,t)
	= \frac{1}{2} \io u_{0t}^2
	+ \frac{1}{2} \io u_{0x}^2
	+ \int_0^t \io g u_t
	\qquad \mbox{for a.e.~} t\in (0,T).
  \ee
\end{lem}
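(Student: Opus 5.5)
Since $u$ is only known to have $u_t\in L^\infty((0,T);L^2(\Om))$, we cannot test (\ref{8.3}) by $u_t$ directly. The plan is to regularize in time, apply the energy identity to the smooth object, and pass to the limit. Two routes seem natural: a Galerkin/uniqueness argument, or a direct mollification argument. I would take the more robust route through uniqueness of weak solutions of the linear wave equation.

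\emph{Step 1 (construction of a well-behaved solution).} Expand in the Dirichlet eigenfunctions $(e_k)_{k\in\N}$ of $-\partial_{xx}$ on $\Om$. Solve the Galerkin ODE system
\[
w^{(N)}_{tt}=w^{(N)}_{xx}+P_N g, \qquad w^{(N)}(\cdot,0)=P_N u_0, \qquad w^{(N)}_t(\cdot,0)=P_N u_{0t},
\]
where $P_N$ denotes the orthogonal projection onto ${\rm span}\{e_1,\dots,e_N\}$. Since $P_N g\in L^1((0,T);L^2(\Om))$, these are Carathéodory ODEs with absolutely continuous solutions. Testing by $w^{(N)}_t$ is then legitimate and yields (\ref{877.4}) exactly for $w^{(N)}$, with $P_N g$ in place of $g$.

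Applying the same identity to differences $w^{(N)}-w^{(M)}$ together with Gronwall-free bookkeeping (the right-hand side is controlled by $\|P_Ng-P_Mg\|_{L^1L^2}\cdot\sup\|w_t\|$) gives that $(w^{(N)},w^{(N)}_t)$ is Cauchy in $C^0([0,T];W_0^{1,2}(\Om))\times C^0([0,T];L^2(\Om))$. The limit $w$ therefore satisfies (\ref{877.4}) for \emph{every} $t$ and solves (\ref{8.3}).

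\emph{Step 2 (uniqueness).} Set $z:=u-w$. Then $z$ solves (\ref{8.3}) with zero data and $g\equiv0$, and it has the regularity in (\ref{8.1}). The goal is $z\equiv 0$.

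This is the main obstacle, because one cannot test by $z_t$. I would use the classical Ladyzhenskaya/Lions–Magenes trick. For fixed $s\in(0,T)$, take $\vp(\cdot,t):=-\int_t^s z(\cdot,\tau)\,d\tau$ for $t<s$ and $\vp:=0$ for $t\ge s$. Then $\vp\in W_0^{1,2}$, $\vp_t=z$, and $\vp(\cdot,s)=0$. After approximation, (\ref{8.3}) extends to such test functions. We do this by rewriting (\ref{8.3}) in its once-integrated form $\int\!\!\int(-z_t\vp_t+z_x\vp_x)=0$, which is valid since $z_t\in L^\infty L^2$ and zero initial data kill the boundary terms.

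Inserting this $\vp$ gives
\[
\frac12\io z^2(\cdot,s)+\frac12\io\Big|\int_0^s z_x\Big|^2=0,
\]
where the identities $z_t\vp_t=z_tz=\frac12(z^2)_t$ and $z_x\vp_x=\frac12\partial_t|\vp_x|^2$ are used. Hence $z(\cdot,s)=0$, and so $z\equiv0$.

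\emph{Conclusion.} It follows that $u=w$ a.e. The identity (\ref{877.4}) then holds for $u$ at almost every $t$. It holds only a.e. because $u_t,u_x$ are merely $L^\infty$ in time and are identified with the continuous representatives of $w_t,w_x$ only almost everywhere.

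The delicate points are justifying the extension of the test class in Step 2 (density of smooth functions in $\{\vp\in L^2W_0^{1,2}:\vp_t\in L^2L^2,\ \vp(\cdot,s)=0\}$), and checking that the weak formulation (\ref{8.3}) indeed encodes $z_t(\cdot,0)=0$.
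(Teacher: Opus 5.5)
The paper contains no proof of Lemma \ref{lem877}; it imports the lemma from \cite[Lemma 5.2]{win_existence}. So there is no argument in the text to compare yours with, and your proposal has to stand as a self-contained replacement for that citation. As such it is correct, and it follows the classical Lions--Magenes/Evans scheme. First, the energy identity is proved for a Galerkin-constructed solution $w$: testing by $w_t$ is legitimate at the finite-dimensional level and survives the limit, which even gives (\ref{877.4}) for every $t$. Second, the given $u$ is identified with $w$ via the Ladyzhenskaya test function $-\int_t^s z$. Compared with the citation, this gives a complete proof that never has to test the equation for $u$ by $u_t$, at the cost of length.

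One step needs an argument you did not supply. Integrating $\int_0^T\io z\vp_{tt}$ by parts in time produces the boundary term $-\io z(\cdot,0)\vp_t(\cdot,0)$. Its vanishing requires $z(\cdot,0)=0$, i.e.\ $u(\cdot,0)=u_0$ for the continuous representative of $u$. This is not among the hypotheses, and it is not the same as the vanishing of the data terms in (\ref{8.3}); it must be extracted from (\ref{8.3}). This takes only a few lines. Take $\psi\in C_0^\infty(\Om)$ and $\vp(x,t):=\psi(x)\,t\,\eta(t/\delta)$, where $\eta\in C^\infty([0,\infty))$ equals $1$ near $0$ and vanishes on $[1,\infty)$. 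The identity obtained after integration by parts then gives $\io z(\cdot,0)\psi=O(\delta)$ as $\delta\searrow 0$, because $z_t,z_x\in L^\infty((0,T);L^2(\Om))$. You use $z(\cdot,0)=0$ a second time in $\int_0^s\io z_tz=\frac12\io z^2(\cdot,s)$. By contrast, the condition on $z_t$ at $t=0$, which you list as delicate, needs no separate check. It is already encoded in the once-integrated identity, because the test functions there need not vanish at $t=0$; yours does not, since it equals $-\int_0^s z$ at $t=0$.

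In this one-dimensional Dirichlet setting, Step 2 can also be made much lighter. By density of $C_0^\infty(\Om)$ in $W_0^{1,2}(\Om)$, you may test (\ref{8.3}) with $\chi(t)e_k(x)$. This shows that $t\mapsto\io u(\cdot,t)e_k$ is a weak solution of $a''+\lambda_k a=\io g e_k$ with data $\io u_0e_k$ and $\io u_{0t}e_k$. Uniqueness for this scalar ODE is elementary, so this coefficient coincides with $\io w(\cdot,t)e_k$ for every $k$, and hence $u=w$. This route needs neither the Ladyzhenskaya test function nor the density argument in $W^{1,1}((0,T);W_0^{1,2}(\Om))$ that it requires.
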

An application to differences of two solutions to (\ref{0}) yields a mechanical counterpart of Lemma \ref{lem14},
and by combination with the latter hence leads to the following.
\begin{lem}\label{lem21}
  Assume (\ref{f}), and let $K>0$ and $T>0$.
  Then there exists $\Gamma_3(K,T)>0$ with the property that if $(u_0,u_{0t},\Theta_0)$ and $(\whu_0,\whu_{0t},\whT_0)$ 
  satisfy (\ref{init}),
  and if $(u,\Theta)$ and $(\whu,\whT)$ are weak solutions of (\ref{0}) in $\Om\times (0,T)$ emanating from
  $(u_0,u_{0t},\Theta_0)$ and $(\whu_0,\whu_{0t},\whT_0)$, respectively, and additionally fulfilling (\ref{99.1})
  as well as
  \be{21.1}
	\|\whu_t(\cdot,t)\|_{L^2(\Om)} 
	+ \|\Theta(\cdot,t)\|_{L^1(\Om)}
	\le K
	\qquad \mbox{for all } t\in (0,T)
  \ee
  and
  \be{21.2}
	\int_0^T \io \Theta_x^2 \le K,
  \ee
  then
  \bea{21.3}
	& & \hs{-16mm}
	\sup_{t\in (0,T)} \bigg\{
	\io |u_t(\cdot,t)-\whu_t(\cdot,t)|^2
	+ \io |u_x(\cdot,t)-\whu_x(\cdot,t)|^2
	+ \io |\Theta(\cdot,t)-\whT(\cdot,t)|^2 \bigg\}
	+ \int_0^T \io |\Theta_x-\whTx|^2  \nn\\
	&\le& \Gamma_3(K,T) \cdot \bigg\{
	\io |u_{0t}-\whu_{0t}|^2
	+ \io |u_{0x}^2-\whu_{0x}|^2
	+ \io |\Theta_0-\whT_0|^2 
	\bigg\}.
  \eea
\end{lem}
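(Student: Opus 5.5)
Set $w:=u-\whu$. Subtracting the two instances of (\ref{wu}) shows that $w$ is a weak solution of the forced wave equation in the sense of (\ref{8.3}), with data $(u_0-\whu_0,u_{0t}-\whu_{0t})$ and forcing
\bas
	g:=-\big\{ f'(\Theta)\Theta_x - f'(\whT)\whTx\big\}.
\eas
By (\ref{99.1}), $w$ fulfills (\ref{8.1}). Since $f'$ is bounded, $g\in L^2((0,T);L^2(\Om))\subset L^1((0,T);L^2(\Om))$. Hence Lemma \ref{lem877} applies and yields, for a.e.~$t\in(0,T)$,
\bas
	\frac12\io w_t^2(\cdot,t)+\frac12\io w_x^2(\cdot,t)
	= \frac12\io (u_{0t}-\whu_{0t})^2+\frac12\io (u_{0x}-\whu_{0x})^2
	- \int_0^t\io \big\{ f'(\Theta)\Theta_x - f'(\whT)\whTx\big\}(u_t-\whu_t).
\eas
The continuity in (\ref{99.1}) extends this to all $t\in(0,T)$.

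To the integrand at each time I apply (\ref{13.1}) from Lemma \ref{lem13} with $v:=u_t(\cdot,s)$ and $\whv:=\whu_t(\cdot,s)$. The hypothesis (\ref{13.01}) holds for a.e.~$s$ by (\ref{21.1}), and $\Theta(\cdot,s),\whT(\cdot,s)\in W^{1,2}(\Om)$ for a.e.~$s$. Choosing $\eta=\frac14$ gives
\bas
	\frac12\io w_t^2+\frac12\io w_x^2
	\le \frac12\io (u_{0t}-\whu_{0t})^2+\frac12\io (u_{0x}-\whu_{0x})^2
	+\frac14\int_0^t\io (\Theta_x-\whTx)^2
	+\Gamma_1\Big(\frac14,K\Big)\int_0^t\Big\{1+\io\Theta_x^2\Big\}\Big\{\io w_t^2+\io(\Theta-\whT)^2\Big\}.
\eas
I add this to (\ref{14.2}) from Lemma \ref{lem14}, which is applicable because (\ref{21.1}) is exactly (\ref{14.01}). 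The term $\frac14\int_0^t\io(\Theta_x-\whTx)^2$ is absorbed into the full dissipation term on the left of (\ref{14.2}).

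With
\bas
	y(t):=\io w_t^2(\cdot,t)+\io w_x^2(\cdot,t)+\io|\Theta(\cdot,t)-\whT(\cdot,t)|^2
\eas
this gives
\bas
	y(t)+\frac12\int_0^t\io(\Theta_x-\whTx)^2\le c\,y(0)+c\int_0^t a(s)\,y(s)\,ds,
	\qquad a(s):=1+\io\Theta_x^2(\cdot,s),
\eas
where $c$ depends only on $K$. By (\ref{21.2}), $\int_0^T a\le T+K$. Gronwall's inequality in integral form, valid since $y$ is continuous and $a\in L^1((0,T))$, then yields
\bas
	y(t)\le c\,y(0)\,e^{c(T+K)}.
\eas
Reinserting this bound into the integral inequality controls $\int_0^T\io(\Theta_x-\whTx)^2$ as well, which gives (\ref{21.3}) with $\Gamma_3$ depending only on $K$ and $T$. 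Here I read the term $|u_{0x}^2-\whu_{0x}|^2$ in (\ref{21.3}) as the evident typo $|u_{0x}-\whu_{0x}|^2$.

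I expect the main obstacle to be the justification of Lemma \ref{lem877}, in particular checking $g\in L^1((0,T);L^2(\Om))$ and passing from "a.e.~$t$" to all $t$. Both follow from the regularity (\ref{99.1}) and $f'\in L^\infty$. The analytically delicate estimates are already contained in Lemmas \ref{lem13} and \ref{lem14}, so the remaining work is bookkeeping.
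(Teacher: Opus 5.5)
Your proposal is correct and follows the paper's proof essentially step by step. The paper likewise applies Lemma \ref{lem877} to $u-\whu$, bounds the coupling term by (\ref{13.1}), adds (\ref{14.2}), and closes with an integral Gr\"onwall argument using (\ref{21.2}). The differences are cosmetic: you take $\eta=\frac14$ instead of $\frac12$, and you normalize $y$ without the factors $\frac12$. Your forcing $g=-\{f'(\Theta)\Theta_x-f'(\whT)\whTx\}$ also silently corrects a sign slip in the paper's definition of $g$.
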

\proof
  Subtracting the respective versions of (\ref{wu}) satisfied by $(u,\Theta)$ and $(\whu,\whT)$, we see that
  for all $\vp\in C_0^\infty(\Om\times [0,T))$,
  \bas
	\int_0^T \io (u-\whu) \vp_{tt}
	- \io (u_{0t}-\whu_{0t}) \vp(\cdot,0)
	+ \io (u_{0x}-\whu_{0x}) \vp_t(\cdot,0)
	= - \int_0^T \io (u_x-\whux) \vp_x
	+ \int_0^T \io g\vp,
  \eas
  where $g:=-f'(\Theta) \Theta_x - f'(\whT) \whTx$.
  Since (\ref{99.1}) particularly ensures that thanks to (\ref{f}) we have $g\in L^1((0,T);L^2(\Om))$, and that clearly also
  $u-\whu\in C^0([0,T];L^2(\Om)) \cap L^\infty((0,T);W_0^{1,2}(\Om))$ and
  $u_t-\whu_t \in L^\infty((0,T);L^2(\Om))$, we may thus employ Lemma \ref{lem877} to see that
  for all $t\in (0,T)$,
  \bea{99.2}
	& & \hs{-20mm}
	\frac{1}{2} \io |u_t(\cdot,t)-\whu_t(\cdot,t)|^2
	+ \frac{1}{2} \io |u_x(\cdot,t)-\whux(\cdot,t)|^2 \nn\\
	&=& 
	\frac{1}{2} \io |u_{0t}-\whu_{0t}|^2
	+ \frac{1}{2} \io |u_{0x}-\whu_{0x}|^2
	+ \int_0^t \io g\cdot (u_t-\whu_t) \nn\\
	&=&
	\frac{1}{2} \io |u_{0t}-\whu_{0t}|^2
	+ \frac{1}{2} \io |u_{0x}-\whu_{0x}|^2
	- \int_0^t \io \big\{ f'(\Theta) \Theta_x - f'(\whT)\whTx \big\} \cdot (u_t-\whu_t),
  \eea
  because (\ref{99.1}) ensures continuity of the two integrals on the left-hand side herein with respect to the variable $t\in (0,T)$,
  as well as of $(0,T) \ni t\mapsto \int_0^t \io g\cdot (u_t-\whu_t)$.
  Now in view of (\ref{21.1}) we may invoke Lemma \ref{lem13} to see that if we let $c_1\equiv c_1(K):=\Gamma_1(\frac{1}{2},K)$
  with $\Gamma_1(\cdot,\cdot)$ as introduced there, then
  for all $t\in (0,T)$,
  \bea{99.3}
	& & \hs{-20mm}
	- \int_0^t \io \big\{ f'(\Theta) \Theta_x - f'(\whT)\whTx \big\} \cdot (u_t-\whu_t) \nn\\
	&\le& \frac{1}{2} \int_0^t \io (\Theta_x-\whTx)^2 \nn\\
	& & + c_1\cdot\int_0^t \bigg\{ 1 + \io \Theta_x^2(\cdot,s)\bigg\} \cdot \bigg\{
		\io |u_t(\cdot,s)-\whu_t(\cdot,s)|^2
		+ \io |\Theta(\cdot,s)-\whT(\cdot,s)|^2 \bigg\} ds,
  \eea
  while again thanks to (\ref{21.1}), Lemma \ref{lem14} shows that if we let $\Gamma_2(K)$ be as given there, then
  \bea{99.4}
	& & \hs{-20mm}
	\io |\Theta(\cdot,t)-\whT(\cdot,t)|^2
	+ \int_0^t \io (\Theta_x-\whTx)^2 \nn\\
	&\le& \io |\Theta_0-\whT_0|^2  \nn\\
	& & \hs{-4mm}
	+ \Gamma_2(K) \cdot\int_0^t \bigg\{ 1 + \io \Theta_x^2(\cdot,s)\bigg\} \cdot \bigg\{
		\io |u_t(\cdot,s)-\whu_t(\cdot,s)|^2
		+ \io |\Theta(\cdot,s)-\whT(\cdot,s)|^2 \bigg\} ds
  \eea
  for all $t\in (0,T)$.
  In combination, (\ref{99.2})-(\ref{99.4}) show that
  \bas
	y(t):=\frac{1}{2} \io |u_t(\cdot,t)-\whu_t(\cdot,t)|^2
	+ \frac{1}{2} \io |u_x(\cdot,t)-\whux(\cdot,t)|^2
	+ \io |\Theta(\cdot,t)-\whT(\cdot,t)|^2,
	\qquad t\in [0,T],
  \eas
  and
  \bas
	y_0:=\frac{1}{2} \io |u_{0t}-\whu_{0t}|^2
	+ \frac{1}{2} \io |u_{0x}-\whu_{0x}|^2
	+ \io |\Theta_0-\whT_0|^2
  \eas
  as well as
  \bas
	b(t):=(c_1+\Gamma_2(K)) \cdot \bigg\{ 1 + \io \Theta_x^2(\cdot,t)\bigg\}, 
	\qquad t\in (0,T),
  \eas
  satisfy
  \be{99.5}
	y(t) 
	+ \frac{1}{2} \int_0^t \io (\Theta_x-\whT_x)^2
	\le y_0 + \int_0^t b(s) y(s) ds
	\qquad \mbox{for all } t\in [0,T].
  \ee
  Since (\ref{99.1}) ensures that $y\in C^0([0,T])$ and $b\in L^1((0,T))$, 
  and since our assumption in (\ref{21.2}) warrants that
  \bas
	\int_0^T b(t) dt 
	\le c_2\equiv c_2(K,T):=(c_1+\Gamma_2(K)) \cdot (T+K),
  \eas
  a Gr\"onwall lemma asserts that (\ref{99.5}), firstly, implies the inequality
  \bas
	y(t) \le y_0 e^{\int_0^t b(s) ds} \le e^{c_2} y_0
	\qquad \mbox{for all } t\in (0,T),
  \eas
  and thus, secondly, that
  \bas
	\frac{1}{2} \int_0^T \io (\Theta_x-\whT_x)^2
	\le y_0 + c_2 e^{c_2} y_0.
  \eas
  An evident choice of $\Gamma_3(K,T)$ therefore leads to (\ref{21.3}).
\qed
The announced uniqueness statement, actually in a slightly more general version including finite time intervals,
thus becomes an immediate consequence.
\begin{prop}\label{prop22}
  If (\ref{f}) and (\ref{init}) hold, 
  then for each $T>0$, the problem (\ref{0}) admits at most one weak solution $(u,\Theta)$ 
  in $\Om\times (0,T)$ which has the additional property that (\ref{99.1}) holds.
\end{prop}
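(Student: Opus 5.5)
Given two weak solutions $(u,\Theta)$ and $(\whu,\whT)$ of (\ref{0}) in $\Om\times (0,T)$, both satisfying (\ref{99.1}) and emanating from the same data $(u_0,u_{0t},\Theta_0)$, the plan is to apply Lemma \ref{lem21} with $(\whu_0,\whu_{0t},\whT_0):=(u_0,u_{0t},\Theta_0)$. The right-hand side of (\ref{21.3}) then vanishes identically, so that $u_t\equiv\whu_t$, $u_x\equiv \whu_x$ and $\Theta\equiv\whT$ a.e.~in $\Om\times(0,T)$. Since $u(\cdot,t)$ and $\whu(\cdot,t)$ both belong to $W_0^{1,2}(\Om)$, equality of the $x$-derivatives yields $u\equiv\whu$, and uniqueness follows.

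The only thing to verify is the existence of some $K>0$ for which (\ref{21.1}) and (\ref{21.2}) hold. I would take
\bas
	K:=\sup_{t\in (0,T)} \|\whu_t(\cdot,t)\|_{L^2(\Om)}
	+ |\Om|^\frac{1}{2} \sup_{t\in (0,T)} \|\Theta(\cdot,t)\|_{L^2(\Om)}
	+ \int_0^T \io \Theta_x^2 + 1.
\eas
This $K$ is finite: $\whu_t\in C^0([0,T];L^2(\Om))$ gives a bounded first supremum on the compact interval $[0,T]$, and $\Theta\in C^0([0,T];L^2(\Om))$ together with the Cauchy-Schwarz estimate $\|\Theta\|_{L^1(\Om)}\le |\Om|^{1/2}\|\Theta\|_{L^2(\Om)}$ controls the $L^1$ norm uniformly. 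Finally, $\Theta\in L^2((0,T);W^{1,2}(\Om))$ makes $\int_0^T\io\Theta_x^2$ finite. Since the constant $\Gamma_3(K,T)$ in Lemma \ref{lem21} depends only on $K$ and $T$, the data of the two solutions enter only through this finite bound, which is harmless.

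There is no genuine obstacle here, since all the analytic work has already been done in Lemma \ref{lem21}. The only point needing a moment's care is the passage from $u_x\equiv\whu_x$ to $u\equiv\whu$, which the zero boundary trace handles via the Poincaré inequality, together with noting that the estimate holds for all $t\in(0,T)$, so that the two solutions coincide as elements of the spaces in (\ref{99.1}).
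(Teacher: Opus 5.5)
Your proposal is correct and matches the paper's proof: the paper also notes that (\ref{21.1}) and (\ref{21.2}) hold with some solution-dependent $K$ thanks to (\ref{99.1}), then applies Lemma \ref{lem21} with identical initial data, so the right-hand side of (\ref{21.3}) vanishes. Your explicit choice of $K$ (via $\|\Theta\|_{L^1(\Om)}\le|\Om|^{1/2}\|\Theta\|_{L^2(\Om)}$) and the Poincar\'e step from $u_x\equiv\whu_x$ to $u\equiv\whu$ just spell out details the paper leaves implicit.
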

\proof
  If $(u,\Theta)$ and $(\whu,\whT)$ are two weak solutions fulfilling (\ref{99.1}), then (\ref{21.1}) and (\ref{21.2})
  are clearly satisfied with some $K>0$ that may possibly depend on $(u,\Theta)$ and $(\whu,\whT)$.
  As both these solutions emanate from the same initial data, the claim hence immediately results from Lemma \ref{lem21}.
\qed
In order to prepare a second application of Lemma \ref{lem21} with the intention to provide 
a quantitative estimate for differences of solutions,
to be used in the proof not only of Proposition \ref{prop98} but also of the existence claim in Theorem \ref{theo99},
we again rely on Lemma \ref{lem877} and combine its result with a fairly simple observation on mass evolution in (\ref{wt})
to confirm the following rigorous counterpart of the energy identity in (\ref{energy}).
\begin{lem}\label{lem23}
  Assume (\ref{f}) and (\ref{init}), and let $T>0$ and $(u,\Theta)$ be a weak solution of (\ref{0}) in $\Om\times (0,T)$
  which satisfies (\ref{99.1}). Then
  \be{23.1}
	\frac{1}{2} \io u_t^2(\cdot,t)
	+ \frac{1}{2} \io u_x^2(\cdot,t)
	+ \io \Theta(\cdot,t)
	= \frac{1}{2} \io u_{0t}^2
	+ \frac{1}{2} \io u_{0x}^2
	+ \io \Theta_0
	\qquad \mbox{for all } t\in (0,T).
  \ee
\end{lem}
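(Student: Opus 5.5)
We combine two ingredients: the energy identity for the hyperbolic part from Lemma \ref{lem877}, and a mass identity for $\Theta$ obtained by testing (\ref{wt}) with functions depending only on time.

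For the mechanical part, we apply Lemma \ref{lem877} to $u$ itself with $g:=-f'(\Theta)\Theta_x$. By (\ref{13.32}) we have $|g|\le c_3|\Theta_x|$, and (\ref{99.1}) gives $\Theta_x\in L^2((0,T);L^2(\Om))$. Hence $g\in L^2((0,T);L^2(\Om))\subset L^1((0,T);L^2(\Om))$. The hypotheses (\ref{8.1}) follow directly from (\ref{99.1}). Lemma \ref{lem877} then yields, for a.e. $t\in(0,T)$,
\[
	\frac12\io u_t^2(\cdot,t)+\frac12\io u_x^2(\cdot,t)
	=\frac12\io u_{0t}^2+\frac12\io u_{0x}^2-\int_0^t\io f'(\Theta)\Theta_x u_t .
\]
Both sides are continuous in $t$: the left by (\ref{99.1}), the right because $f'(\Theta)\Theta_x u_t\in L^1(\Om\times(0,T))$. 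So the identity holds for all $t\in(0,T)$.

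For the thermal part, we use test functions $\vp(x,t)=\zeta(t)$ with $\zeta\in C_0^\infty([0,T))$; these are admissible in (\ref{wt}) since they belong to $C_0^\infty(\bom\times[0,T))$. Because $\vp_x\equiv0$, the diffusion term and the $f(\Theta)u_t\vp_x$ term drop out, leaving
\[
	-\int_0^T\zeta'(t)\io\Theta\,dt-\zeta(0)\io\Theta_0
	=\int_0^T\zeta(t)\io f'(\Theta)\Theta_x u_t\,dt .
\]
So $M(t):=\io\Theta(\cdot,t)$ has weak derivative $\io f'(\Theta)\Theta_x u_t\in L^1((0,T))$ with initial value $\io\Theta_0$. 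We make this rigorous by taking $\zeta$ as a mollified version of the piecewise linear cutoff $\zd$ from the proof of Lemma \ref{lem14}, then letting $\del\searrow0$. This step uses that $M$ is continuous on $[0,T)$, which holds since $\Theta\in C^0([0,T];L^2(\Om))$. The result is
\[
	\io\Theta(\cdot,t)=\io\Theta_0+\int_0^t\io f'(\Theta)\Theta_x u_t
	\qquad\mbox{for all } t\in(0,T).
\]
Adding this to the mechanical identity cancels the coupling integrals and gives (\ref{23.1}).

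The only delicate point is that the coupling integrand $f'(\Theta)\Theta_xu_t$ must be integrable, so that it can both enter Lemma \ref{lem877} and serve as the derivative of $M$. This follows from (\ref{99.1}) via Cauchy--Schwarz in space and $L^2\times L^\infty$ in time: $\|f'(\Theta)\Theta_x u_t\|_{L^1}\le c_3\|\Theta_x\|_{L^2L^2}\|u_t\|_{L^2L^2}$. The only other subtle point is upgrading the a.e. identity from Lemma \ref{lem877} to all $t$, which the continuity argument above handles.
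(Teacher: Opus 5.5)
Your proposal is correct and follows essentially the same route as the paper. You apply Lemma \ref{lem877} with $g=-f'(\Theta)\Theta_x$, derive the mass identity by testing (\ref{wt}) with smooth approximations of the time-only cutoff $\zd$ and letting $\del\searrow 0$ via continuity of $t\mapsto\io\Theta(\cdot,t)$, and add the two identities so the coupling integrals cancel; your explicit upgrade of the a.e.~identity to all $t\in(0,T)$ by continuity of both sides is a step the paper leaves implicit here (it spells out the same argument in the proof of Lemma \ref{lem21}).
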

\proof
  Given $t_0\in (0,T)$, for $\del\in (0,T-t_0)$ we again let $\zd\in W^{1,\infty}(\R)$ be such that $\zd\equiv 1$ in $(-\infty,t_0]$,
  $\zd\equiv 0$ in $[t_0+\del,\infty)$ and $\zd'\equiv -\frac{1}{\del}$ in $(t_0,t_0+\del)$.
  Once more by straightforward approximation thereof by smooth functions, we see that (\ref{wt}) in fact as well holds for
  $\vp(x,t)\equiv \vp_\del(x,t):=\zd(t)$, $(x,t)\in\bom\times [0,T)$, and that thus
  \bas
	\frac{1}{\del} \int_{t_0}^{t_0+\del} \io \Theta(x,t) dxdt
	- \io \Theta_0(x) dx
	= \int_0^T \io \zd(t) f'(\Theta(x,t))\Theta_x(x,t) u_t(x,t) dxdt
  \eas
  By continuity of $[0,T)\ni t\mapsto \io \Theta(x,t) dx$ at $t=t_0$, in the limit $\del\searrow 0$ this implies that
  for any such $t_0$ we have
  \bas
	\io \Theta(\cdot,t_0) = \io \Theta_0 + \int_0^{t_0} \io f'(\Theta) \Theta_x u_t.
  \eas
  When combined with the outcome of Lemma \ref{lem877}, this confirms (\ref{23.1}).
\qed
Together with Lemma \ref{lem21}, this yields bounds for fairly arbitrary weak solutions of (\ref{0}) fulfilling (\ref{99.1}),
among which we extract the ones needed below.
\begin{lem}\label{lem24}
  Assume (\ref{f}), and let $K>0$ and $T>0$.
  Then there exists $\Gamma_4(K,T)>0$ such that if (\ref{init}) holds with
  \be{i1}
	\|v_0\|_{L^2(\Om)}
	+ \|u_0\|_{W^{1,2}(\Om)}
	+ \|\Theta_0\|_{L^2(\Om)} 
	\le K,
  \ee
  and if $(u,\Theta)$ is a weak solution
  of (\ref{0}) in $\Om\times (0,T)$ which satisfies (\ref{99.1}), then
  \be{24.1}
	\sup_{t\in (0,T)} \big\{ \|u_t(\cdot,t)\|_{L^2(\Om)} + \|\Theta(\cdot,t)\|_{L^1(\Om)} \big\}
	+ \int_0^T \io \Theta_x^2
	\le \Gamma_4(K,T).
  \ee
\end{lem}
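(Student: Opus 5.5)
The plan is to obtain the first two quantities in (\ref{24.1}) from the energy identity, and then the dissipation integral from an $L^2$ testing of the $\Theta$-equation.

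\textbf{Step 1 (energy bound).} By Lemma \ref{lem23}, since $\Theta\ge 0$ (so $\|\Theta(\cdot,t)\|_{L^1(\Om)}=\io\Theta(\cdot,t)$), for all $t\in(0,T)$ we have
\bas
	\frac{1}{2}\io u_t^2(\cdot,t) + \io \Theta(\cdot,t)
	\le \frac{1}{2}\io u_{0t}^2 + \frac{1}{2}\io u_{0x}^2 + \io\Theta_0 .
\eas
The right-hand side is at most $K^2+|\Om|^{\frac{1}{2}}K$ by (\ref{i1}) and the Cauchy--Schwarz inequality. This gives a bound $c_1(K)$ for $\sup_t\{\|u_t(\cdot,t)\|_{L^2(\Om)}+\|\Theta(\cdot,t)\|_{L^1(\Om)}\}$.

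\textbf{Step 2 ($L^2$ testing).} For the dissipation term I would reuse the Steklov-averaged testing from the proof of Lemma \ref{lem14}, with the second solution taken as $(\whu,\whT)\equiv(0,0)$. This pair is a weak solution with zero data because $f(0)=0$, and it trivially satisfies (\ref{99.1}). The derivation of (\ref{14.9}) uses nothing beyond (\ref{99.1}), so it yields, for all $t_0\in(0,T)$,
\bas
	\frac{1}{2}\io \Theta^2(\cdot,t_0) + \int_0^{t_0}\io\Theta_x^2
	\le \frac{1}{2}\io\Theta_0^2 + \int_0^{t_0}\io f'(\Theta)\Theta_x u_t\Theta + \int_0^{t_0}\io f(\Theta)u_t\Theta_x .
\eas

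\textbf{Step 3 (absorbing the cross terms; main step).} Lemma \ref{lem13} cannot be used directly here. Its right-hand side contains the factor $(1+\io\Theta_x^2)\io\Theta^2$, and a Gr\"onwall argument on that would need the very bound on $\int_0^T\io\Theta_x^2$ we are trying to prove. Instead I would exploit the $L^1$ bound from Step 1 directly. By (\ref{13.32}) and (\ref{13.33}), both integrands are bounded by $2c_3|\Theta|\,|\Theta_x|\,|u_t|$. Hence each time-slice contribution is at most
\bas
	2c_3\|\Theta\|_{L^\infty(\Om)}\|\Theta_x\|_{L^2(\Om)}\|u_t\|_{L^2(\Om)} .
\eas
Now apply the one-dimensional Gagliardo--Nirenberg inequality
\bas
	\|\Theta\|_{L^\infty(\Om)}\le C\|\Theta_x\|_{L^2(\Om)}^{\frac{1}{3}}\|\Theta\|_{L^1(\Om)}^{\frac{2}{3}}+C\|\Theta\|_{L^1(\Om)} .
\eas
Together with Step 1, this bounds the contribution by $c_2(K)\{\|\Theta_x\|_{L^2(\Om)}^{\frac{4}{3}}+\|\Theta_x\|_{L^2(\Om)}\}$. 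Both exponents are below $2$, so Young's inequality gives at most $\frac{1}{2}\io\Theta_x^2+c_3(K)$. Inserting this into Step 2 yields
\bas
	\frac{1}{2}\int_0^{T}\io\Theta_x^2\le \frac{1}{2}K^2+c_3(K)T ,
\eas
using $t_0\nearrow T$ and monotone convergence. Combined with Step 1, this gives $\Gamma_4(K,T)$.

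The only delicate point is justifying that (\ref{14.9}) holds for a single solution, i.e.\ with $\whT\equiv 0$. This is immediate from the proof of Lemma \ref{lem14}, which used only (\ref{99.1}) and the integrability properties (\ref{14.41})--(\ref{14.42}).
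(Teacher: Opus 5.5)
Your proof is correct up to one slip in the Gagliardo--Nirenberg exponents, noted at the end, and it takes a different route from the paper.

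\textbf{Comparison with the paper.} The paper also starts from Lemma \ref{lem23} to bound $\|u_t(\cdot,t)\|_{L^2(\Om)}$. It then applies Lemma \ref{lem21} with the trivial solution $(0,0)$ in the \emph{unhatted} slot and the given solution as $(\whu,\whT)$. Because Lemma \ref{lem13} is asymmetric, three things follow:
\begin{itemize}
\item hypothesis (\ref{21.1}) only asks for the energy bound on $\|\whu_t\|_{L^2(\Om)}$;
\item hypothesis (\ref{21.2}) holds trivially;
\item the Gr\"onwall weight $1+\io\Theta_x^2$ reduces to $1$.
\end{itemize}
So your remark that Lemma \ref{lem13} cannot be used directly holds only for one assignment of roles; the paper avoids the circularity you describe by swapping them.

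Your route tests the single $\Theta$-equation and exploits the a priori $L^1$ bound on $\Theta$ from the energy identity. This makes the cross terms absorbable outright. No Gr\"onwall step is needed, and you get $\int_0^T\io\Theta_x^2$ and $\sup_t\io\Theta^2$ growing only linearly in $T$. The paper's constant instead inherits the exponential from the proof of Lemma \ref{lem21}. The cost of your approach is re-running the Steklov argument of Lemma \ref{lem14} for one solution. The paper gets the lemma almost for free from machinery already in place, and also obtains $\sup_t\io u_x^2$ along the way.

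\textbf{Correction.} The one-dimensional Gagliardo--Nirenberg inequality has the exponents the other way round:
$\|\Theta\|_{L^\infty(\Om)}\le C\|\Theta_x\|_{L^2(\Om)}^{2/3}\|\Theta\|_{L^1(\Om)}^{1/3}+C\|\Theta\|_{L^1(\Om)}$.
Your version fails for a tent of height $1$ and width $\lambda\to 0$: its right-hand side behaves like $\lambda^{1/2}$. With the correct inequality the cross terms are bounded by $c_2(K)\{\|\Theta_x\|_{L^2(\Om)}^{5/3}+\|\Theta_x\|_{L^2(\Om)}\}$. Since $5/3<2$, Young's inequality still gives $\frac12\io\Theta_x^2+c_3(K)$, so your conclusion is unaffected.
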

\proof
  We first observe that (\ref{i1}) particularly controls the expression on the right of (\ref{23.1}), so that 
  using Lemma \ref{lem23} we obtain $c_1=c_1(K)>0$ such that any weak solution of (\ref{0}) fulfilling
  (\ref{99.1}) satisfies
  \be{24.2}
	\|u_t(\cdot,t)\|_{L^2(\Om)} \le c_1
	\qquad \mbox{for all } t\in (0,T).
  \ee
  As $(0,0)$ trivially is another weak solution of (\ref{0}) with this additional property, this especially enables us to
  infer from Lemma \ref{lem21} that with $\Gamma_3(\cdot,\cdot)$ obtained there,
  \bas
	\sup_{t\in (0,T)} \bigg\{
	\io u_t^2(\cdot,t)
	+ \io u_x^2(\cdot,t)
	+ \io \Theta^2(\cdot,t) \bigg\}
	+ \int_0^T \io \Theta_x^2 
	\le \Gamma_3(c_1,T) \cdot \bigg\{
	\io u_{0t}^2
	+ \io u_{0x}^2
	+ \io \Theta_0^2
	\bigg\}.
  \eas
  Again explicitly relying on (\ref{i1}), from this we obtain (\ref{24.1}) upon an obvious selection of $\Gamma_4(K,T)$.
\qed
We are thereby placed in a position which enables us to turn Lemma \ref{lem21} into the following statement 
which actually is stronger than the part of Proposition \ref{prop98} focusing on continuous dependence.
\begin{lem}\label{lem97}
  Assume (\ref{f}), and suppose that for some $T>0$, we are given 
  sequences $(u^{(k)})_{k\in\N} \subset C^0([0,T];W_0^{1,2}(\Om))$ and
  $(\Theta^{(k)})_{k\in\N} \subset C^0([0,T];L^2(\Om)) \cap L^2((0,T);W^{1,2}(\Om))$
  with the property that $(u^{(k)}_t)_{k\in\N} \subset C^0([0,T];L^2(\Om))$,
  and that for each $k\in\N$, $(u^{(k)},\Theta^{(k)})$ is a weak solution of
  (\ref{0}) in the sense of Definition \ref{dw}, corresponding to initial data $(u_0^{(k)},u_{0t}^{(k)},\Theta_0^{(k)})$, $k\in\N$,
  which are such that $(u_0^{(k)})_{k\in\N}$, $(u_{0t}^{(k)})_{k\in\N}$ and $(\Theta_0^{(k)})_{k\in\N}$ form Cauchy sequences
  in $W_0^{1,2}(\Om), L^2(\Om)$ and $L^2(\Om)$, respectively.
  Then 
  \bea{97.1}
	& & \hs{-20mm}
	\sup_{k'>k} \sup_{t\in (0,T)} \bigg\{ 
	\io |u_t^{(k)}(\cdot,t)-u_t^{(k')}(\cdot,t)|^2
	+ \io |u_x^{(k)}(\cdot,t)-u_x^{(k')}(\cdot,t)|^2
	+ \io |\Theta^{(k)}(\cdot,t)-\Theta^{(k')}(\cdot,t)|^2
	\bigg\} \nn\\
	& & + \int_0^T \io |\Theta_x^{(k)} - \Theta_x^{(k')} |^2	
	\to 0
	\qquad \mbox{as } k\to\infty.
  \eea
\end{lem}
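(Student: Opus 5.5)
The plan is to apply Lemma \ref{lem21} to each pair $(u^{(k)},\Theta^{(k)})$ and $(u^{(k')},\Theta^{(k')})$ with $k'>k$. The constant $K$ has to be chosen independently of $k$ and $k'$, and the right-hand side of (\ref{21.3}) then tends to zero by the Cauchy property of the initial data.

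\textbf{Step 1: a uniform bound $K_0$ on the data.} Since Cauchy sequences are bounded, there is $K_0>0$ such that
\bas
	\|u_{0t}^{(k)}\|_{L^2(\Om)} + \|u_0^{(k)}\|_{W^{1,2}(\Om)} + \|\Theta_0^{(k)}\|_{L^2(\Om)} \le K_0
	\qquad \mbox{for all } k\in\N .
\eas
We also check that each triple $(u_0^{(k)},u_{0t}^{(k)},\Theta_0^{(k)})$ satisfies (\ref{init}). Nonnegativity of $\Theta_0^{(k)}$ follows from $\Theta^{(k)}\ge 0$ together with $\Theta^{(k)}(\cdot,t)\to\Theta_0^{(k)}$ in $L^2(\Om)$ as $t\searrow 0$. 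The latter convergence is obtained by identifying the initial trace in (\ref{wt}) with the continuous representative, which is straightforward. The regularity hypotheses of the lemma are exactly (\ref{99.1}) for each $k$.

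\textbf{Step 2: a uniform a priori bound.} Lemma \ref{lem24} with this $K_0$ yields
\bas
	\sup_{t\in(0,T)}\big\{\|u_t^{(k)}(\cdot,t)\|_{L^2(\Om)}+\|\Theta^{(k)}(\cdot,t)\|_{L^1(\Om)}\big\}
	+\int_0^T\io |\Theta_x^{(k)}|^2 \le \Gamma_4(K_0,T)
\eas
for all $k$. With $K:=2\Gamma_4(K_0,T)$ (any choice at least $\Gamma_4(K_0,T)$ works), conditions (\ref{21.1}) and (\ref{21.2}) hold for the choice $(u,\Theta):=(u^{(k)},\Theta^{(k)})$ and $(\whu,\whT):=(u^{(k')},\Theta^{(k')})$. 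This is because (\ref{21.1}) only involves $\|\whu_t\|_{L^2}$ and $\|\Theta\|_{L^1}$, and each of these is bounded by $\Gamma_4$.

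\textbf{Step 3: conclusion.} Lemma \ref{lem21} gives, for all $k'>k$,
\bas
	\mbox{(left side of (\ref{97.1}) for the pair } k,k')
	\le \Gamma_3(K,T)\Big\{\|u_{0t}^{(k)}-u_{0t}^{(k')}\|_{L^2(\Om)}^2+\|u_{0x}^{(k)}-u_{0x}^{(k')}\|_{L^2(\Om)}^2+\|\Theta_0^{(k)}-\Theta_0^{(k')}\|_{L^2(\Om)}^2\Big\}.
\eas
Here the middle term in (\ref{21.3}) is read as $|u_{0x}-\whu_{0x}|^2$, since the square on $u_{0x}$ there is evidently a typo. Taking the supremum over $k'>k$, the right-hand side tends to zero as $k\to\infty$ by the Cauchy assumption, and this gives (\ref{97.1}).

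The main, and only mildly delicate, point is Step 2: $K$ must not depend on $k$ or $k'$, which is exactly what Lemma \ref{lem24} provides. Everything else is a direct invocation of the earlier results.
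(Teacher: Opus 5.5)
Your proposal is correct and follows the paper's own argument: the Cauchy data are bounded, Lemma \ref{lem24} gives a $k$-independent constant, and Lemma \ref{lem21} applied to each pair combined with the Cauchy property finishes the proof (your check that $\Theta_0^{(k)}\ge 0$ is a harmless extra). Your explicit choice $K:=2\Gamma_4(K_0,T)$ is slightly more careful than the paper's, since (\ref{21.1}) pairs $\widehat{u}_t$ from one solution with $\Theta$ from the other; for the same reason, your parenthetical ``any choice at least $\Gamma_4(K_0,T)$ works'' should read ``at least $2\Gamma_4(K_0,T)$''.
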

\proof
  Using that our assumptions particularly ensure boundedness of $((u_0^{(k)},u_{0t}^{(k)},\Theta_0^{(k)}))_{k\in\N}$ in
  $W^{1,2}(\Om)\times L^2(\Om)\times L^2(\Om)$,
  we can find $K>0$ such that (\ref{i1}) holds.
  We may thus invoke Lemma \ref{lem24} to see that if we let $\Gamma_4(\cdot,\cdot)$ be as provided there, then
  \bas
	\sup_{t\in (0,T)} \big\{ \|u_t^{(k)}(\cdot,t)\|_{L^2(\Om)} + \|\Theta^{(k)}(\cdot,t)\|_{L^1(\Om)} \big\}
	+ \int_0^T \io |\Theta^{(k)}_x|^2
	\le \Gamma_4(K,T)
	\qquad \mbox{for all } k\in\N.
  \eas
  This, in turn, suitably asserts the hypotheses of Lemma \ref{lem21}, whence taking $\Gamma_3(\cdot,\cdot)$ from that reference
  we infer that
  \bas
	& & \hs{-20mm}
	\sup_{t\in (0,T)} \bigg\{
	\io |u_t^{(k)}(\cdot,t)-u_t^{(k')}(\cdot,t)|^2
	+ \io |u_x^{(k)}(\cdot,t)-u_x^{(k')}(\cdot,t)|^2 
	+ \io |\Theta^{(k)}(\cdot,t)-\Theta^{(k')}(\cdot,t)|^2 \bigg\} \\
	& & \hs{-10mm}
	+ \int_0^T \io |\Theta^{(k)}_x-\Theta^{(k')}_x|^2  \nn\\
	&\le& \Gamma_3\big(\Gamma_4(K,T),T\big) \cdot \bigg\{
	\io |u_{0t}^{(k)}-u_{0t}^{(k')}|^2
	+ \io |u_{0x}^{(k)}-u_{0x}^{(k')}|^2
	+ \io |\Theta_0^{(k)}-\Theta_0^{(k')}|^2
	\bigg\}
  \eas
  for all $k\in\N$ and $k'\in\N$. Making full use of our hypothesis on the initial data now, we thereby obtain (\ref{97.1}).
\qed
\mysection{Existence}\label{sect4}
\subsection{Regularized problems and a general approximation property thereof}
Next concerned with the claim on existence of a solution satisfying (\ref{99.1}), in a first step toward this we 
address this issue for a class of initial data narrower than that in (\ref{init}). 
Specifically, we shall first assume that
\be{i0}
	v_0 \in C_0^\infty(\Om),
	\quad 
	u_0 \in C_0^\infty(\Om)
	\quad \mbox{and} \quad
	\Theta_0 \in C^\infty(\bom)
	\mbox{ is positive in $\bom$ and such that } \Theta_{0x} \in C_0^\infty(\Om),
\ee
and for $\eps\in (0,1)$ we shall consider the approximate variant of (\ref{0}) given by
\be{0eps}
	\lball
	\vepst = - \eps \vepsxxxx + \uepsxx - \big( f(\Teps)\big)_x,
	\qquad & x\in\Om, \ t>0, \\[1mm]
	\uepst = \eps \uepsxx + \veps,
	\qquad & x\in\Om, \ t>0, \\[1mm]
	\Tepst = \Tepsxx - f(\Teps) \vepsx,
	\qquad & x\in\Om, \ t>0, \\[1mm]
	\veps=\vepsxx=0, \quad \ueps=0, \quad \Tepsx=0,
	\qquad & x\in\pO, \ t>0,
	\ear
\ee
along with the initial conditions
\be{0epsii}
	\veps(x,0)=v_0(x), \quad \ueps(x,0)=u_0(x), \quad \Teps(x,0)=\Theta_0(x),
	\qquad  x\in\Om.
\ee
Then standard parabolic theory (\cite{amann}) can be used to see that each of these problems admits a global classical solution
with regularity properties convenient for our later purposes.
\begin{lem}\label{lem_loc}
  Assume (\ref{f}), and suppose that (\ref{i0}) holds.
  Then for each $\eps\in (0,1)$, there exist 
  \be{l1}
	\lbal
	\veps\in C^{4,1}(\bom\times [0,\infty)), \\
	\ueps\in C^{2,1}(\bom\times [0,\infty))
	\qquad \mbox{and} \\
	\Teps \in C^{2,1}(\bom\times [0,\infty))
	\ear
  \ee
  such that $\Teps>0$ in $\bom\times [0,\infty)$, and that (\ref{0eps})-(\ref{0epsii}) is satisfied in the classical pointwise sense.
  Moreover, $u_{\eps xxt}, u_{\eps xxx}, u_{\eps xxxx}$ and $\Theta_{\eps xt}$ are continuous in $\bom\times [0,\infty)$,
  and $v_{\eps xt} \in L^p_{loc}(\bom\times [0,\infty))$ for each $p\in (1,\infty)$.
\end{lem}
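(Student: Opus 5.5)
We have to prove Lemma \ref{lem_loc}: global classical solvability of the regularized system (\ref{0eps})-(\ref{0epsii}) together with positivity of $\Teps$ and the stated extra regularity. The plan is to proceed in three steps: local existence via abstract parabolic theory, positivity via the comparison principle, and global extension via $\eps$-dependent a priori bounds on finite time intervals.

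\textbf{Local existence.} We view (\ref{0eps}) as a triangular, normally elliptic quasilinear system for $(\veps,\ueps,\Teps)$. The principal part is $\mathrm{diag}(-\eps\pa_x^4,\eps\pa_x^2,\pa_x^2)$, and every coupling term is of strictly lower order in the respective equation. Indeed, in the $v$-equation $\uepsxx$ and $(f(\Teps))_x$ are of order at most two, below four. In the $u$- and $\Theta$-equations, $\veps$ and $f(\Teps)\vepsx$ are of order at most one, below two. The boundary conditions are the Navier conditions for the fourth-order operator together with Dirichlet and Neumann conditions, and these are complementing. Amann's theory (\cite{amann}) therefore yields a unique maximal solution on $\bom\times[0,T_{max})$, in a phase space such as $W^{2+\sigma,2}\times W^{1+\sigma,2}\times W^{1+\sigma,2}$ with small $\sigma>0$. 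It also yields an extensibility criterion: if $T_{max}<\infty$, then this norm blows up.

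The compatibility assumptions in (\ref{i0}) ensure smoothness up to $t=0$. Since $v_0,u_0,\Theta_{0x}$ have compact support, all boundary conditions and their higher-order compatibility conditions hold at $t=0$. Parabolic Schauder theory, bootstrapped alternately over the equations, then gives the regularity in (\ref{l1}) up to $t=0$. The additional claims follow in the same way. Differentiating the $u$-equation gives continuity of $u_{\eps xxt},u_{\eps xxx},u_{\eps xxxx}$, since $\veps\in C^{4,1}$. Maximal $L^p$ regularity applied to the $x$-differentiated $v$-equation gives $v_{\eps xt}\in L^p_{loc}$.

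\textbf{Positivity.} We treat the $\Theta$-equation as a linear equation $\Theta_t=\Theta_{xx}-a(x,t)\Theta$ with $a:=\frac{f(\Teps)}{\Teps}\vepsx$. This coefficient is bounded on compact time intervals, because $0\le f(\xi)\le c_3\xi$. Comparison with the subsolution $(\min\Theta_0)e^{-\|a\|_\infty t}$ then shows $\Teps>0$.

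\textbf{Global extension; expected main obstacle.} This step requires $\eps$-dependent a priori bounds on $(0,T)$ for each finite $T$. First, testing the $v$-equation by $\veps$, the $u$-equation by $-\uepsxx$, and integrating the $\Theta$-equation, we expect an energy identity. Its structure is $\frac{d}{dt}\{\frac12\io\veps^2+\frac12\io\uepsx^2+\io\Teps\}+\eps\io\vepsxx^2+\eps\io\uepsxx^2=0$, where the term $\io\uepsxx\veps$ cancels against $-\io\uepsx\vepsx$ after integration by parts. This gives $L^\infty_t L^2$ bounds on $\veps,\uepsx$, an $L^1$ bound on $\Teps$, and $\eps\int\!\!\io\vepsxx^2\le C$.

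Next, testing the $\Theta$-equation by $\Teps$ gives
\[
\frac12\frac{d}{dt}\io\Teps^2+\io\Tepsx^2=-\io f(\Teps)\vepsx\Teps\le c_3\|\vepsx\|_{L^\infty}\io\Teps^2 .
\]
Since $\vepsxx\in L^2_tL^2$, we have $\|\vepsx\|_{L^\infty}\in L^2(0,T)$, hence in $L^1(0,T)$, and Gr\"onwall's lemma gives $L^\infty_tL^2\cap L^2_tW^{1,2}$ bounds for $\Teps$. Higher norms then follow successively by testing with higher derivatives and using the $\eps$-smoothing. For example, testing the $v$-equation by $\vepsxxxx$ is controlled by $\io|(f(\Teps))_{xxx}|\,|\vepsx|$-type terms, which are absorbed by $\eps\io v_{\eps xxxxx}^2$-like dissipation after further integration by parts; alternatively one uses semigroup smoothing estimates. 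Closing this bootstrap up to the phase-space norm is the step I expect to be most laborious. This is where the $\eps$-dependent fourth-order dissipation must be exploited carefully, for instance via a Duhamel-formula argument in fractional power spaces rather than pure energy testing. Once these bounds hold, the extensibility criterion forces $T_{max}=\infty$.
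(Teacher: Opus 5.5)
Your architecture (local solution, positivity, $\eps$-dependent a priori bounds, continuation, then Schauder and maximal-regularity upgrades) is sound. It differs from the paper mainly in being self-contained. The paper does not prove global solvability: it cites \cite[Lemma 2.3]{claes_lankeit_win} and \cite[Lemma 2.4]{win_existence} for a global solution with (\ref{l1}). It gets $\Teps\ge 0$ by comparison with $0$ using $f(0)=0$, and then $\Teps>0$ from the strong maximum principle. The extra regularity is handled as you propose: Schauder theory for the second, first and third equations, and maximal Sobolev regularity for the $x$-differentiated first equation. Your positivity argument is a genuine shortcut. Since $a=\frac{f(\Teps)}{\Teps}\vepsx$ is bounded on $\bom\times[0,T]$ for each $T<T_{max}$, comparison with $(\min_{\bom}\Theta_0)e^{-\|a\|_\infty t}$ gives strict positivity at once. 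You should add that, for the local theory, $f$ must either be extended beyond $[0,\infty)$, or positivity must be built into the fixed-point set and then propagated by this comparison.

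The genuine gap is the global extension, which carries the substance of the lemma and which you leave open. Moreover, the mechanism you sketch would not close. Treating $-\io (f(\Teps))_x\vepsxxxx$ by further integrations by parts produces third derivatives of $\Teps$, which are not controlled at that stage, so the scheme is circular. This test also yields only the dissipation $\eps\io\vepsxxxx^2$, not fifth-order dissipation. No Duhamel argument is needed: after your two correct first steps, three elementary tests suffice.

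First, test the second equation by $u_{\eps xxxx}$. The boundary terms vanish because $\uepst=0$ and, from the equation itself, $\uepsxx=0$ on $\pO$. This gives
\[
\frac12\frac{d}{dt}\io\uepsxx^2+\eps\io\uepsxxx^2=\io\vepsxx\uepsxx ,
\]
so $\uepsxx\in L^\infty((0,T);L^2(\Om))$ by the energy bound $\eps\int_0^T\io\vepsxx^2\le C$.

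Second, test the first equation by $\vepsxxxx$ and absorb the coupling terms directly by Young's inequality:
\[
\frac{d}{dt}\io\vepsxx^2+\eps\io\vepsxxxx^2\le\frac{C}{\eps}\Big(\io\uepsxx^2+\io\Tepsx^2\Big).
\]
The right-hand side is integrable by the first step and by your $L^2((0,T);W^{1,2}(\Om))$ bound for $\Teps$.

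Third, test the third equation by $-\Tepsxx$, which gives $\frac{d}{dt}\io\Tepsx^2+\io\Tepsxx^2\le C\|\vepsx\|_{L^\infty(\Om)}^2\io\Teps^2$.

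The resulting $L^\infty((0,T);W^{2,2}\times W^{2,2}\times W^{1,2})$ bounds make all forcing terms bounded in $L^\infty((0,T);L^2(\Om))$. One semigroup-smoothing step then controls your phase-space norm, so $T_{max}=\infty$. With these steps inserted, your proof becomes a complete, self-contained substitute for the references the paper cites.
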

\proof
  For a proof of global existence of a solution fulfilling (\ref{l1}), we may refer to the reasonings 
  in \cite[Lemma 2.3]{claes_lankeit_win} and \cite[Lemma 2.4]{win_existence},
  noting that nonnegativity of $\Teps$ can be obtained by straightforward application of a comparison principle
  relying on our overall assumption that $f(0)=0$  (cf.~e.g., \cite[Lemma 2.2]{claes_win_NONRWA}), 
  and that strict positivity of $\Teps$ thereupon is a direct
  consequence of a strong maximum principle (see \cite[Theorem 10.13]{hulshof_ellpar}, for instance).
  Based on the local H\"older continuity of $f$ and $f'$ in $[0,\infty)$, as contained in the requirements in (\ref{f}),
  successive applications of parabolic Schauder theory (\cite{LSU}) 
  to the second, the first and then the third equation in (\ref{0eps})
  thereafter readily reveal the claimed continuity properties of
  $u_{\eps xxt}, u_{\eps xxx}, u_{\eps xxxx}$ and $\Theta_{\eps xt}$.
  For $p\in (1,\infty)$ and $T>0$, finally, the inclusion $v_{\eps xt} \in L^p(\Om\times (0,T))$ is a consequence of
  maximal Sobolev regularity features of linear parabolic flows, because the continuity of $f''$ and $f'$ as well as 
  of $u_{\eps xxx}, \Teps, \Tepsx$ and $\Tepsxx$ ensure
  that all the functions $u_{\eps xxx}, f''(\Teps) \Tepsx^2$ and $f'(\Teps) \Tepsxx$ are bounded in $\Om\times (0,T)$.
\qed
For fixed initial data satisfying the strong assumptions in (\ref{i0}), we may draw on an {\em a priori} estimation
procedure and an appropriate subsequence extraction developed in \cite{win_existence} to infer that these
solutions approach a global weak solution to (\ref{0}) in the following sense.
\begin{lem}\label{lem51}
  Suppose that (\ref{f}) and (\ref{i0}) hold.
  Then there exist $(\eps_j)_{j\in\N} \subset (0,1)$, as well as a global weak solution $(u,\Theta)$ of (\ref{0})
  in the sense of Definition \ref{dw}, such that $\eps_j\searrow 0$ as $j\to\infty$, and that
  \be{conv}
	\ueps \wto u,
	\quad
	\uepsx \wto u_x,
	\quad 	
	\veps \wto u_t,
	\quad
	\Teps \wto \Theta
	\quad \mbox{and} \quad
	\Tepsx \wto \Theta_x
	\qquad \mbox{in } L^2_{loc}(\bom\times [0,\infty)).
  \ee
\end{lem}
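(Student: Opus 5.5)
We plan to obtain Lemma \ref{lem51} by the energy-type a priori estimates available for (\ref{0eps}), followed by weak compactness and a passage to the limit in weak formulations of (\ref{0eps}). This follows the strategy of \cite{win_existence}, to which the statement explicitly refers.

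\textbf{Energy bound.} We test the first equation in (\ref{0eps}) by $\veps$, the second by $-\uepsxx$, and integrate the third. With the boundary conditions $\veps=\vepsxx=0$ and $\ueps=0$, this gives
\bas
	\frac{d}{dt}\bigg\{\frac12\io \veps^2+\frac12\io\uepsx^2+\io\Teps\bigg\}
	+\eps\io\vepsxx^2+\eps\io\uepsxx^2=0 .
\eas
Here the coupling terms cancel: $\io \uepsxx\veps$ cancels against $\io \veps\uepsxx$, and $-\io (f(\Teps))_x\veps=\io f(\Teps)\vepsx$ cancels against the mass change $-\io f(\Teps)\vepsx$. This yields $\eps$-independent bounds for $\veps$ in $L^\infty((0,\infty);L^2(\Om))$, for $\ueps$ in $L^\infty((0,\infty);W_0^{1,2}(\Om))$, and for $\Teps$ in $L^\infty((0,\infty);L^1(\Om))$, using positivity of $\Teps$.

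\textbf{Temperature gradient bound.} To control $\Tepsx$ we would test the third equation by a sublinear function of $\Teps$, for instance by $-(\Teps+1)^{-\alpha}$ or by $\ln\Teps$-type weights. The idea is to absorb $f(\Teps)\vepsx$ through integration by parts, producing $f'(\Teps)\Tepsx\veps$, together with Young's inequality, using $f'\le c$ and the $L^2$ bound on $\veps$. Combined with the $L^1$ bound and a Gagliardo--Nirenberg interpolation, this should give a locally-in-time uniform bound on $\Tepsx$, for example in $L^2_{loc}$ with weights. If necessary it could first be a bound on $(\Teps+1)^{(1-\alpha)/2}$ in $L^2_{loc}([0,\infty);W^{1,2})$, upgraded via (\ref{13.31})-type inequalities.

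\textbf{Main obstacle.} Reaching the precise $L^2_{loc}$ bound on $\Tepsx$ and $\Teps$ claimed in (\ref{conv}) from merely $L^1$-type temperature control is what I expect to be hardest. Here I would rely directly on the estimates of \cite{win_existence}, which are derived for exactly (\ref{0eps}) with data as in (\ref{i0}); these data are smooth, so all those bounds apply.

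\textbf{Passage to the limit.} We then obtain time-derivative bounds in dual spaces, for $\Tepst$ in $L^1_{loc}([0,\infty);(W^{1,\infty}(\Om))^\star)$ and analogously for $\uepst$ and $\vepst$. An Aubin--Lions argument then extracts $\eps_j\searrow0$ with strong convergence $\Teps\to\Theta$ a.e.\ and in $L^2_{loc}$, and $\ueps\to u$ in $C^0_{loc}([0,\infty);L^2)$, together with the weak convergences in (\ref{conv}). From the second equation, $\veps=\uepst-\eps\uepsxx$, and the weak limit of $\veps$ is identified with $u_t$. Passing to the limit in (\ref{wu}) is then straightforward, since $f'(\Teps)\Tepsx\wto f'(\Theta)\Theta_x$ by strong convergence of $\Teps$ and weak convergence of $\Tepsx$.

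The delicate terms in (\ref{wt}) are the products $f'(\Teps)\Tepsx\veps$ and $f(\Teps)\veps$, which combine two weakly converging factors. For these we would use the additional strong compactness of $\Tepsx$ in $L^2_{loc}$ obtained in \cite{win_existence} via an energy-type identity argument for $\Theta$, or alternatively rewrite the terms in divergence form. Nonnegativity of $\Theta$ follows from a.e.\ convergence.
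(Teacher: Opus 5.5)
Your overall plan agrees with the paper's. The paper gives no self-contained argument for this lemma: it obtains it from the \emph{a priori} estimates and the subsequence extraction in \cite{win_existence}, and you, too, rest the decisive step on that reference. What you sketch on your own, however, misplaces the difficulty.

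The $\eps$-independent bound for $\Tepsx$ in $L^2(\Om\times(0,T))$ is not an obstacle under (\ref{i0}). Since $\Theta_0\in L^2(\Om)$, you can test the third equation in (\ref{0eps}) by $\Teps$ and use your bound on $\io\veps^2$, $0\le f(\xi)\le c\xi$, $|f'|\le c$ and a Gagliardo--Nirenberg inequality. This yields $\frac{d}{dt}\io\Teps^2+\frac12\io\Tepsx^2\le C\io\Teps^2$, which is precisely Lemma \ref{lem9}. Conversely, your sublinear-testing route could not deliver this bound. Control of $L^1$ type on the temperature cannot imply an $L^2$ bound on its gradient: already for the heat equation with $0\le\Theta_0\in L^1(\Om)\setminus L^2(\Om)$ one has $\int_0^T\io\Theta_x^2=\infty$. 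Moreover, (\ref{13.31}) applied to $(\Teps+1)^{(1-\alpha)/2}$ only gives $\int_0^T\|\Teps(\cdot,t)\|_{L^\infty(\Om)}^{1-\alpha}dt\le C$. That is too weak to remove the weight; for that you would need an $L^\infty$ bound such as Lemma \ref{lem5}.

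The real crux is the term $f'(\Teps)\Tepsx\veps$ in (\ref{wt}). By contrast, $f(\Teps)\veps$ is harmless: after Aubin--Lions, $f(\Teps)\to f(\Theta)$ strongly in $L^2_{loc}$, so strong-times-weak convergence suffices. Neither of your fallbacks handles the first product.
\begin{itemize}
\item Rewriting in divergence form only reproduces the original coupling, since $f'(\Teps)\Tepsx\veps-(f(\Teps)\veps)_x=-f(\Teps)\vepsx$, and $\vepsx$ has no $\eps$-independent bound.
\item The $L^2$ energy identity for $\Teps$ contains $\int_0^t\io\{f'(\Teps)\Teps+f(\Teps)\}\Tepsx\veps$, a product of the same type. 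By itself it therefore cannot produce strong convergence of $\Tepsx$.
\end{itemize}
With only the bounds you list, the weak limit of $\Tepsx\veps$ remains unidentified, and a genuine defect-excluding mechanism is needed. Formally, one can play two identities against each other. One is the total energy inequality, with Lemma \ref{lem877} applied to the limit wave equation. The other is the coupling-free identity $\frac{d}{dt}\io\Phi(\Teps)=\io\frac{f'(\Teps)}{f^2(\Teps)}\Tepsx^2$, where $\Phi':=\frac{1}{f}$. The latter shows that the defect in the product is nonnegative, the former that its integral is nonpositive. This is the ingredient that must be taken from \cite{win_existence}. So your argument is complete exactly to the extent that the paper's is, namely through that citation.
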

\subsection{A short-term estimate in $W^{2,2}(\Om) \times W^{1,2}(\Om) \times W^{1,2}(\Om)$}
Yet for initial data satisfying (\ref{i0}), we now intend to improve our information on regularity of the solution
gained in Lemma \ref{lem51}, where we will first concentrate on suitably small neighborhoods of the temporal origin.\abs
Our considerations in this direction will be launched by the following basic property which, parallel to a similar
observation made in \cite{cieslak_SIMA} in the particular case when $f\equiv id$, relies on a favorable 
cancellation of contributions due to the nonlinearities in (\ref{0eps}).
\begin{lem}\label{lem4}
  Assume (\ref{f}), and let
  \be{rho}
	\rho(\xi):=\frac{f'(\xi)}{f(\xi)},	
	\qquad \xi>0.
  \ee
  Then whenever (\ref{i0}) holds and $\eps\in (0,1)$, the solution of (\ref{0eps})-(\ref{0epsii}) from Lemma \ref{lem_loc} satisfies
  \bea{4.1}
	& & \hs{-20mm}
	\frac{d}{dt} \bigg\{ \frac{1}{2} \io \vepsx^2 + \frac{1}{2} \io \uepsxx^2 + \frac{1}{2} \io \rho(\Teps) \Tepsx^2 \bigg\}
	+ \io \rho(\Teps) \Tepsxx^2 
	+ \eps \io \vepsxxx^2
	+ \eps \io \uepsxxx^2 \nn\\
	&=& - \frac{1}{2} \io \rho'(\Teps) \Tepsx^2 \Tepsxx
	- \io \rho(\Teps) f'(\Teps) \Tepsx^2 \vepsx
	- \frac{1}{2} \io \rho'(\Teps) f(\Teps) \Tepsx^2 \vepsx
  \eea
  for all $t>0$.
\end{lem}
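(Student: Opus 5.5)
The identity is a direct energy computation. I would differentiate each of the three quadratic functionals in time, substitute the equations in (\ref{0eps}), and integrate by parts using the boundary conditions. The nonlinear terms cancel thanks to the relation $\rho f=f'$. Everything is justified by Lemma \ref{lem_loc}. Since $\Teps>0$ in $\bom\times[0,\infty)$ and $f>0$ on $(0,\infty)$ (from $f(0)=0$, $f'>0$), $\rho(\Teps)$ and $\rho'(\Teps)$ are continuous and bounded on $\bom\times[0,T]$. The continuity of $u_{\eps xxt},\Theta_{\eps xt}$ and the $L^p$ integrability of $v_{\eps xt}$ allow differentiation under the integral and all integrations by parts; the $v$-term may first be handled in time-integrated form.

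First I would record the boundary information. Since $\veps=0$ on $\pO$, also $\vepst=0$ there. Since $\ueps=0$ on $\pO$, we get $\uepst=0$ there, and the second equation then gives $\eps\uepsxx=-\veps=0$, so $\uepsxx=0$ on $\pO$. We also have $\vepsxx=\Tepsx=0$ on $\pO$.

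Then:
\begin{itemize}
\item For $\frac12\io\vepsx^2$, integration by parts gives $-\io\vepsxx\vepst$. Inserting the first equation yields $-\eps\io\vepsxxx^2-\io\vepsxx\uepsxx+\io\vepsxx f'(\Teps)\Tepsx$. The last integral becomes $-\io f''(\Teps)\Tepsx^2\vepsx-\io f'(\Teps)\Tepsxx\vepsx$, the boundary term vanishing because $\Tepsx=0$.
\item For $\frac12\io\uepsxx^2$, I use $\uepsxxt=\eps u_{\eps xxxx}+\vepsxx$ and $\uepsxx|_{\pO}=0$. This gives $-\eps\io\uepsxxx^2+\io\uepsxx\vepsxx$, which cancels the mixed term above.
\item For $\frac12\io\rho(\Teps)\Tepsx^2$, the time derivative is $\frac12\io\rho'\Tepsx^2\Tepst+\io\rho\Tepsx\Theta_{\eps xt}$. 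The second integral equals $-\io(\rho'\Tepsx^2+\rho\Tepsxx)\Tepst$. Inserting $\Tepst=\Tepsxx-f\vepsx$ produces $-\frac12\io\rho'\Tepsx^2\Tepsxx-\io\rho\Tepsxx^2+\frac12\io\rho' f\Tepsx^2\vepsx+\io\rho f\Tepsxx\vepsx$.
\end{itemize}

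The key cancellations come next. Because $\rho f=f'$, the term $\io\rho f\Tepsxx\vepsx$ cancels $-\io f'\Tepsxx\vepsx$ from the first step. Differentiating $\rho=f'/f$ gives $\rho' f=f''-\rho f'$, that is $f''=\rho' f+\rho f'$. Hence $-\io f''\Tepsx^2\vepsx=-\io\rho' f\Tepsx^2\vepsx-\io\rho f'\Tepsx^2\vepsx$. Combined with $+\frac12\io\rho' f\Tepsx^2\vepsx$, this leaves exactly $-\io\rho f'\Tepsx^2\vepsx-\frac12\io\rho' f\Tepsx^2\vepsx$. Collecting all terms yields (\ref{4.1}).

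The only delicate point is regularity for the $\vepsx$ term, since only $v_{\eps xt}\in L^p_{loc}$ is available. I would handle it by writing the identity in integrated form on $(t_1,t_2)$ and using the classical continuity of the remaining quantities to recover the pointwise-in-time derivative for all $t>0$.
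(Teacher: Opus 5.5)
Your computation is correct and follows the paper's proof. It uses the same functional, the same substitutions of the three equations, and the same key cancellation via $\rho f=f'$. The only difference is bookkeeping: the paper keeps the cross term $\io f'(\Teps)\Tepsx\vepsxx$ and cancels it directly against the term $-\io \rho(\Teps) f(\Teps)\Tepsx\vepsxx$, which comes from writing $\Theta_{\eps xt}=(\Tepsxx-f(\Teps)\vepsx)_x$. You instead integrate both contributions by parts, so your cancellation happens at the level of $\Tepsxx\vepsx$, and you need the extra identity $f''=\rho' f+\rho f'$ to reach the form in (\ref{4.1}).
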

\proof
  From the first two equations in (\ref{0eps}) we obtain that thanks to the regularity features asserted by Lemma \ref{lem_loc},
  \bas
	\frac{1}{2} \frac{d}{dt} \io \vepsx^2
	&=& - \io \vepsxx \cdot \big\{ -\eps \vepsxxxx + \uepsxx - f'(\Teps) \Tepsx \big\} \nn\\
	&=& - \eps \io \vepsxxx^2 - \io \uepsxx \cdot \big\{ u_{\eps xx t} - \eps u_{\eps xxxx} \big\}
	+ \io f'(\Teps) \Tepsx \vepsxx 
  \eas
  and hence
  \be{4.2}
	\frac{1}{2} \frac{d}{dt} \io \vepsx^2 + \frac{1}{2} \frac{d}{dt} \io \uepsxx^2
	+ \eps \io \vepsxxx^2 + \eps \io \uepsxxx^2
	= \io f'(\Teps) \Tepsx \vepsxx
  \ee
  for all $t>0$.
  Likewise, in view of the definition of $\rho$ we infer from the third equation in (\ref{0eps}) that
  \bas
	\frac{1}{2} \frac{d}{dt} \io \rho(\Teps)\Tepsx^2
	&=& \io \rho(\Teps) \Tepsx \cdot \big\{ \Tepsxx - f(\Teps) \vepsx\big\}_x
	+ \frac{1}{2} \io \rho'(\Teps) \Tepsx^2 \cdot \big\{ \Tepsxx - f(\Teps) \vepsx\big\} \nn\\
	&=& - \io \rho(\Teps)\Tepsxx^2
	- \io \rho'(\Teps) \Tepsx^2 \Tepsxx \nn\\
	& & - \io \rho(\Teps)f'(\Teps) \Tepsx^2 \vepsx
	- \io \rho(\Teps) f(\Teps) \Tepsx\vepsxx \nn\\
	& & + \frac{1}{2} \io \rho'(\Teps) \Tepsx^2 \Tepsxx
	- \frac{1}{2} \io \rho'(\Teps) f(\Teps) \Tepsx^2 \vepsx
  \eas
  for all $t>0$.
  On adding this to (\ref{4.2}) we see that the crucial rightmost summand in (\ref{4.2}) can be cancelled, and that suitably
  simplifying thus leads to (\ref{4.1}).
\qed
In order to suitably control the expressions in (\ref{4.1}) containing $\rho(\Teps), \rho'(\Teps), f(\Teps)$ and $f'(\Teps)$,
we recall from \cite[Remark 6.8]{win_existence} that the two-sided estimates on $\Theta_0$
in (\ref{i0}) are inherited by solutions of (\ref{0eps})-(\ref{0epsii}) in the following sense.
\begin{lem}\label{lem5}
  Assume (\ref{f}) and (\ref{i0}).
  Then there exists $C>0$ such that for the solutions of (\ref{0eps})-(\ref{0epsii}) we have
  \be{5.1}
	\frac{1}{C} \le \Teps(x,t) \le C
	\qquad \mbox{for all $x\in\Om$, $t>0$ and } \eps\in (0,1).
  \ee
\end{lem}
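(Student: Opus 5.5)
Lemma \ref{lem5} asks for $\eps$-independent two-sided pointwise bounds on $\Teps$. I would prove the two bounds separately, in each case by a comparison argument for the third equation in (\ref{0eps}), viewed as a scalar parabolic equation in which $\vepsx$ plays the role of a given coefficient. The difficulty is that no $\eps$-independent bound on $\vepsx$ in $L^\infty$ is available. So the comparison should not treat $-f(\Teps)\vepsx$ as a small perturbation. Instead, I would reformulate the equation so that $\vepsx$ enters only through quantities controlled by the energy-type structure.

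\textbf{Lower bound.} I would change variables to $w_\eps:=F(\Teps)$ with $F(\xi):=\int_1^\xi \frac{d\sigma}{f(\sigma)}$, so that
\[
w_{\eps t}=w_{\eps xx}+\frac{f'(\Teps)}{f(\Teps)}\Tepsx^2\cdot\frac{1}{f(\Teps)}\cdot f(\Teps)-\vepsx .
\]
Computed precisely, this reads $w_{\eps t}=w_{\eps xx}+\rho(\Teps)\Tepsx^2\cdot\frac{1}{1}\ldots-\vepsx$. Since $f'>0$ and $f(\xi)>0$ for $\xi>0$, the gradient term has a good sign for a subsolution comparison. The remaining forcing $-\vepsx$ equals $-\uepsxt+\eps\uepsxxx$ by the second equation. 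Next, set $z_\eps:=w_\eps+\uepsx$. By the second equation this satisfies
\[
z_{\eps t}\ge z_{\eps xx}+(\eps-1)\uepsxxx ,
\]
which still contains a third derivative. So I would choose the combination more carefully: $z_\eps:=w_\eps+\uepsx$ with $\uepsx$ weighted so that the diffusions match, or alternatively take $\eps$-dependent bookkeeping.

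Since this cancellation is not exact when $\eps\neq1$, the fallback is the route of \cite[Remark 6.8]{win_existence}, which the statement explicitly cites. There one uses an $\eps$-uniform $L^\infty$ bound for $\uepsx$, obtained from the first two equations written as a damped system for $(\veps,\uepsx)$ via Riemann-invariant-type quantities $\veps\pm\uepsx$, together with a bound for $\int_0^t \vepsx$ expressed through $\uepsx(t)-u_{0x}$ up to $\eps$-terms. Under (\ref{i0}) these are bounded because the data are smooth and compactly supported. With $\uepsx$ bounded, the comparison for $z_\eps$ yields $w_\eps\ge -C$, and since $F(\xi)\to-\infty$ as $\xi\searrow0$ (because $f(\xi)\le c\xi$), this gives $\Teps\ge 1/C$.

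\textbf{Upper bound.} I would argue symmetrically. Because $F$ grows at least logarithmically at infinity ($f'\le c_3$ implies $f(\xi)\le c_3\xi$), an upper bound $w_\eps\le C$ translates into $\Teps\le C$. For this direction the gradient term $\rho\Tepsx^2$ has the wrong sign. So I would instead work with $\Teps$ directly, or with $G(\Teps):=\int^{\Teps}\frac{f'}{f}$, and use the maximum principle at interior maxima, where $\Tepsx=0$ kills the gradient terms.

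\textbf{Main obstacle.} The main obstacle is securing the $\eps$-independent $L^\infty$ control of $\uepsx$ (equivalently of $\int_0^t\vepsx$) uniformly in $\eps$. I expect this to rely on the specific smooth, compactly supported data in (\ref{i0}), and I would take it from \cite{win_existence} rather than rederive it.
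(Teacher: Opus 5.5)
Your sketch has genuine gaps in both halves. The paper itself gives no argument at this point and simply imports the bounds from \cite[Remark 6.8]{win_existence}. The content you ascribe to that remark, however, is not something one can expect to hold.

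\textbf{Lower bound.} Your first step is sound once the computation is corrected: $w_\eps=F(\Teps)$ satisfies $w_{\eps t}=w_{\eps xx}+f'(\Teps)w_{\eps x}^2-\vepsx\ge w_{\eps xx}-\vepsx$, with $w_{\eps x}=0$ on $\pO$. What follows does not work. As you note, $z_\eps=w_\eps+\uepsx$ only satisfies $z_{\eps t}\ge z_{\eps xx}+(\eps-1)u_{\eps xxx}$. An $L^\infty$ bound on $\uepsx$ gives no control of the source $(\eps-1)u_{\eps xxx}$ in a comparison argument, so the step ``with $\uepsx$ bounded, the comparison for $z_\eps$ yields $w_\eps\ge -C$'' is a non sequitur. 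Likewise, $\int_0^t\vepsx=\uepsx-u_{0x}-\eps\int_0^t u_{\eps xxx}$ leaves an uncontrolled $\eps$-term. Nor can Riemann invariants supply an $\eps$-uniform $L^\infty$ bound on $\uepsx$. Because of $-\eps v_{\eps xxxx}$ and $\eps u_{\eps xx}$, the quantities $\veps\pm\uepsx$ do not satisfy pure transport equations, and the fourth-order part has no maximum principle. Even without regularization, the forcing $f'(\Theta)\Theta_x$ is only controlled in $L^2(\Om\times(0,T))$ (cf.~Lemma \ref{lem9}), which bounds nothing along characteristics.

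\textbf{Upper bound.} Evaluating at an interior maximum removes only the terms containing $\Tepsx$. The term $-f(\Teps)\vepsx$, or $-G'(\Teps)f(\Teps)\vepsx$ after any substitution $G$, survives. It has no sign and no $\eps$-independent bound, so nothing follows.

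\textbf{The missing idea.} The only $\eps$-uniform information on $\veps$ you need is $\sup_{t>0}\{\|\veps(\cdot,t)\|_{L^2(\Om)}+\|\Teps(\cdot,t)\|_{L^1(\Om)}\}\le C$. This holds for all times by the dissipative energy identity in the proof of Lemma \ref{lem9}, and in one dimension it is subcritical for the heat equation.

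For the lower bound, keep your supersolution $w_\eps$ and compare it with the solution $\underline{w}$ of $\underline{w}_t=\underline{w}_{xx}-\vepsx$, $\underline{w}_x|_{\pO}=0$, $\underline{w}(\cdot,0)=F(\Theta_0)$. Since $\veps=0$ on $\pO$, the Neumann and Dirichlet heat semigroups satisfy $e^{\tau\Delta_N}\partial_x\veps=\partial_x e^{\tau\Delta_D}\veps$. Hence $\|e^{\tau\Delta_N}\partial_x\veps\|_{L^\infty(\Om)}\le C(1+\tau^{-3/4})e^{-\lambda\tau}\|\veps\|_{L^2(\Om)}$ for some $\lambda>0$. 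The Duhamel integral is therefore bounded uniformly in $t>0$ and $\eps\in(0,1)$, giving $F(\Teps)\ge -C$. Then $\Teps\ge 1/C$ follows from $F(\xi)\le\frac{1}{c}\ln\xi$ for $\xi\in(0,1)$, where $c:=\sup f'$.

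For the upper bound, test the third equation in (\ref{0eps}) by $\Teps^{p-1}$. Shift the derivative from $\vepsx$ onto $\Teps^{p-1}f(\Teps)$, using $f(\xi)\le c\xi$ and $f'\le c$. Estimate $\io\Teps^p\veps^2\le\|\Teps^{p/2}\|_{L^\infty(\Om)}^2\|\veps\|_{L^2(\Om)}^2$ by a Gagliardo--Nirenberg inequality, and run an Alikakos--Moser iteration starting from the time-uniform $L^1$ bound.
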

Throughout adequately small time intervals, using this turns (\ref{4.1}) into $\eps$-independent estimates
in conveniently strong topological frameworks:
\begin{lem}\label{lem6}
  Assume (\ref{f}) and (\ref{i0}).
  Then there exist $\tau>0$ and $C>0$ such that for 
  any choice of $\eps\in (0,1)$,
  the solution of (\ref{0eps})-(\ref{0epsii}) has the property that
  \be{6.1}
	\io \vepsx^2(\cdot,t) \le C
	\qquad \mbox{for all } t\in (0,\tau)
  \ee
  and
  \be{6.2}
	\io \uepsxx^2(\cdot,t) \le C
	\qquad \mbox{for all } t\in (0,\tau)
  \ee
  and
  \be{6.3}
	\io \Tepsx^2(\cdot,t) \le C
	\qquad \mbox{for all } t\in (0,\tau)
  \ee
  as well as
  \be{6.4}
	\int_0^\tau \io \Tepsxx^2 \le C.
  \ee
\end{lem}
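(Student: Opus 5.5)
We will derive an ODI for the functional
\[
y_\eps(t):=\frac{1}{2}\io \vepsx^2(\cdot,t)+\frac{1}{2}\io \uepsxx^2(\cdot,t)+\frac{1}{2}\io \rho(\Teps)\Tepsx^2(\cdot,t)
\]
from the identity (\ref{4.1}) in Lemma \ref{lem4}, and then integrate it over a short time interval. By Lemma \ref{lem5}, $\Teps$ takes values in a fixed compact interval $[\frac{1}{C},C]\subset(0,\infty)$, independently of $\eps$. On this interval $f\ge f(\frac{1}{C})>0$ (since $f(0)=0$ and $f'>0$), $f$ and $f'$ are bounded, and $f''$ is bounded by (\ref{f}). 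Hence $\rho=f'/f$ and $\rho'=\frac{f''}{f}-\frac{f'^2}{f^2}$ are bounded there, and $\rho$ is bounded below by a positive constant $c_1$. It follows that $y_\eps$ is equivalent to $\|\vepsx\|_{L^2}^2+\|\uepsxx\|_{L^2}^2+\|\Tepsx\|_{L^2}^2$, and that the dissipated quantity $\io\rho(\Teps)\Tepsxx^2$ dominates $c_1\io\Tepsxx^2$. Moreover, $y_\eps(0)$ is independent of $\eps$ thanks to (\ref{i0}).

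Next we estimate the three terms on the right of (\ref{4.1}), all of which are cubic. Since $\Tepsx=0$ on $\pO$, we have $\|\Tepsx\|_{L^\infty}^2\le c\|\Tepsx\|_{L^2}\|\Tepsxx\|_{L^2}$, which follows from the Gagliardo--Nirenberg inequality without a lower-order term, or from Poincar\'e's inequality applied to $\Tepsx$ (a function vanishing at the boundary). For the first term this gives
\[
\Big|\io\rho'(\Teps)\Tepsx^2\Tepsxx\Big|\le c\|\Tepsx\|_{L^\infty}\|\Tepsx\|_{L^2}\|\Tepsxx\|_{L^2}\le c\|\Tepsx\|_{L^2}^{3/2}\|\Tepsxx\|_{L^2}^{3/2}\le \frac{c_1}{4}\io\Tepsxx^2+c\,y_\eps^3 .
\]
For the other two terms,
\[
\io|\Tepsx|^2|\vepsx|\le\|\Tepsx\|_{L^\infty}\|\Tepsx\|_{L^2}\|\vepsx\|_{L^2}\le c\|\Tepsx\|_{L^2}^{3/2}\|\Tepsxx\|_{L^2}^{1/2}\|\vepsx\|_{L^2}\le\frac{c_1}{4}\io\Tepsxx^2+c\,y_\eps^{5/3}.
\]
Dropping the nonnegative $\eps$-terms, this yields
\[
y_\eps'+\frac{c_1}{2}\io\Tepsxx^2\le c\,(y_\eps^3+y_\eps^{5/3})\le c'\,(1+y_\eps)^3 .
\]

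An ODE comparison for $z=1+y_\eps$ now shows that $z(t)\le 2z(0)$ for all $t\in(0,\tau)$, where $\tau:=\frac{3}{8c'z(0)^2}$ is independent of $\eps$. This gives (\ref{6.1})--(\ref{6.3}). Integrating the differential inequality over $(0,\tau)$ then yields (\ref{6.4}).

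The step that needs care is the positive lower bound for $f$ on the range of $\Teps$, since otherwise $\rho$ blows up near zero. This is exactly what Lemma \ref{lem5} supplies, so the expected main obstacle is already removed. The remaining points to check are that the boundary conditions justify the Gagliardo--Nirenberg inequality without a lower-order term, and that the differentiation in (\ref{4.1}) is legitimate, which Lemma \ref{lem_loc} ensures.
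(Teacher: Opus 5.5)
Your proposal is correct and follows essentially the same route as the paper. Both arguments combine the identity from Lemma \ref{lem4} with the two-sided bound from Lemma \ref{lem5} to control $\rho$ and $\rho'$, use a boundary-vanishing Gagliardo--Nirenberg inequality for $\Tepsx$ to obtain a cubic differential inequality, and then apply ODE comparison on an $\eps$-independent short interval, integrating for (\ref{6.4}); the only cosmetic difference is that you interpolate directly through $\|\Tepsx\|_{L^\infty(\Om)}$, whereas the paper first reduces all three terms via Young's inequality to $\io \Tepsx^4$ and then applies an $L^4$ Gagliardo--Nirenberg estimate.
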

\proof
  According to Lemma \ref{lem5} and the strict positivity and boundedness of $f'$, and hence also of $f$, on $(0,\infty)$,
  we can find positive constants $c_i, i\in\{1,2,3,4,5\}$, such that whenever $\eps\in (0,1)$,
  \be{6.6}
	c_1 \le f(\Teps) \le c_2,
	\quad c_3 \le f'(\Teps) \le c_4
	\quad \mbox{and} \quad
	|f''(\Teps)| \le c_5
	\qquad \mbox{in } \Om\times (0,\infty),
  \ee
  which by definition of $\rho$ implies that if we let $c_6:=\frac{c_3}{c_2}$, 
  $c_7:=\frac{c_4}{c_1}$ and $c_8:=\frac{c_5}{c_1}+\frac{c_4^2}{c_1^2}$, then
  \be{6.7}
	c_6 \le \rho(\Teps) \le c_7
	\quad \mbox{and} \quad
	|\rho'(\Teps)| \le c_8
	\qquad \mbox{in } \Om\times (0,\infty)
  \ee
  for each $\eps\in (0,1)$.
  In particular, Lemma \ref{lem4} thus says that if for $\eps\in (0,1)$ we let
  \be{6.8}
	\yeps(t):=1+\frac{1}{2} \io \vepsx^2(\cdot,t) + \frac{1}{2} \io \uepsxx^2(\cdot,t) 
	+ \frac{1}{2} \io \rho(\Teps(\cdot,t))\Tepsx^2(\cdot,t),
	\qquad t\ge 0,
  \ee
  then
  \bas
	\yeps'(t) + c_6 \io \Tepsxx^2
	= - \frac{1}{2} \io \rho'(\Teps) \Tepsx^2 \Tepsxx
	- \io \rho(\Teps) f'(\Teps) \Tepsx^2 \vepsx 
	- \frac{1}{2} \io \rho'(\Teps) f(\Teps) \Tepsx^2 \vepsx
  \eas
  for all $t>0$,
  where by Young's inequality, (\ref{6.7}) and (\ref{6.6}),
  \bas
	- \frac{1}{2} \io \rho'(\Teps) \Tepsx^2 \Tepsxx
	&\le& \frac{c_6}{2} \io \Tepsxx^2
	+ \frac{1}{8c_6} \io \rho'^2(\Teps) \Tepsx^4 \\
	&\le& \frac{c_6}{2} \io \Tepsxx^2
	+ \frac{c_8^2}{8c_6} \io \Tepsx^4
  \eas
  and
  \bas
	- \io \rho(\Teps) f'(\Teps) \Tepsx^2 \vepsx
	&\le& \frac{1}{4} \io \vepsx^2
	+ \io \rho^2(\Teps) f'^2(\Teps) \Tepsx^4 \\
	&\le& \frac{1}{4} \io \vepsx^2
	+ c_4^2 c_7^2 \io \Tepsx^4
  \eas
  as well as
  \bas
	- \frac{1}{2} \io \rho'(\Teps) f(\Teps) \Tepsx^2 \vepsx
	&\le& \frac{1}{4} \io \vepsx^2
	+ \frac{1}{4} \io \rho'^2(\Teps) f^2(\Teps) \Tepsx^4 \\
	&\le& \frac{1}{4} \io \vepsx^2
	+ \frac{c_2^2 c_8^2}{4} \io \Tepsx^4 
  \eas
  for all $t>0$.
  Therefore, 
  \be{6.9}
	\yeps'(t)
	+ \frac{c_6}{2} \io \Tepsxx^2
	\le \frac{1}{2} \io \vepsx^2
	+ c_9 \io \Tepsx^4
	\qquad \mbox{for all } t>0
  \ee
  with $c_9:=\frac{c_8^2}{8c_6} + c_4^2 c_7^2 + \frac{c_2^2 c_8^2}{4}$.
  We now employ a Gagliardo-Nirenberg inequality to fix $c_{10}>0$ in such a way that
  \bas
	\|\vp\|_{L^4(\Om)}^4 
	\le c_{10} \|\vp_x\|_{L^2(\Om)} \|\vp\|_{L^2(\Om)}^3
	\qquad \mbox{for all $\vp\in C^1(\bom)$ fulfilling $\vp=0$ on $\pO$,}
  \eas
  whence relying on the Neumann boundary condition in (\ref{0eps}) along with Young's inequality we can estimate
  \bas
	c_9 \io \Tepsx^4
	&\le& c_9 c_{10} \|\Tepsxx\|_{L^2(\Om)} \|\Tepsx\|_{L^2(\Om)}^3 \\
	&\le& \frac{c_6}{4} \|\Tepsxx\|_{L^2(\Om)}^2 
	+ \frac{c_9^2 c_{10}^2}{c_6} \|\Tepsx\|_{L^2(\Om)}^6
	\qquad \mbox{for all } t>0.
  \eas
  In line with (\ref{6.8}), from (\ref{6.9}) we thus infer that if we write $c_{11}:=1+\frac{8c_9^2 c_{10}^2}{c_6^4}$,
  then since $\yeps\ge 1$,
  \bea{6.10}
	\yeps'(t) + \frac{c_6}{4} \io \Tepsxx^2
	&\le& \frac{1}{2} \io \vepsx^2
	+ \frac{8 c_9^2 c_{10}^2}{c_6^4} \cdot \bigg\{ \frac{c_6^3}{2} \io \Tepsx^2 \bigg\}^3 \nn\\
	&\le& \yeps(t) + \frac{8c_9^2 c_{10}^2}{c_6^4} \yeps^3(t) \nn\\[2mm]
	&\le& c_{11} \yeps^3(t)
	\qquad \mbox{for all } t>0.
  \eea
  On dropping the dissipated summand on the left-hand side herein, we particularly conclude that if we let
  \bas
	y_0:=1 + \frac{1}{2} \io v_{0x}^2
	+ \frac{1}{2} \io u_{0xx}^2
	+ \frac{1}{2} \io \rho(\Theta_0) \Theta_{0x}^2
  \eas
  as well as $\tau:=\frac{1}{4c_{11} y_0^2}$, then by an ODE comparison argument,
  \bas
	\yeps^{-2}(t) \ge \yeps^{-2}(0) - 2c_{11} t
	\ge y_0^{-2} - 2c_{11} \tau
	= \frac{1}{2} y_0^{-2}
	\qquad \mbox{for all $t\in (0,\tau)$ and } \eps\in (0,1),
  \eas
  and thus
  \be{6.11}
	\yeps(t) \le \sqrt{2} y_0
	\qquad \mbox{for all $t\in (0,\tau)$ and } \eps\in (0,1).
  \ee
  Thereupon, integrating in (\ref{6.10}) shows that
  \bas
	\frac{c_6}{4} \int_0^\tau \io \Tepsxx^2
	\le \yeps(0) + c_{11} \int_0^\tau \yeps^3(t) dt
	\le y_0 + \sqrt{8} c_{11} y_0^3
	\qquad \mbox{for all } \eps\in (0,1),
  \eas
  which together with (\ref{6.11}) establishes (\ref{6.1})-(\ref{6.4}).
\qed
\subsection{Covering arbitrary time intervals by continuous dependence on initial data}
In turning Lemma \ref{lem6} into some information on regularity of the limit $(u,\Theta)$ at a temporally global level,
we will estimate differences between solutions to (\ref{0eps}) and their time-shifted relatives in the style of (\ref{05}).
In the course of our analysis in this regard, one more application of Lemma \ref{lem13} will rely on basic bounds
that follow from the following approximate counterpart of the energy identity in (\ref{energy}).
\begin{lem}\label{lem9}
  Assume (\ref{f}) and (\ref{i0}), and let $T>0$.
  Then there exists $C(T)>0$ such that the solutions of (\ref{0eps})-(\ref{0epsii}) satisfy
  \be{9.1}
	\sup_{t\in (0,T)} \big\{ \|\veps(\cdot,t)\|_{L^2(\Om)} + \|\Teps(\cdot,t)\|_{L^1(\Om)} \big\}
	+ \int_0^T \io \Tepsx^2 \le C(T)
	\qquad \mbox{for all } \eps\in (0,1).
  \ee
\end{lem}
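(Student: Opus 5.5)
Our plan is to derive an approximate energy identity for (\ref{0eps}), mirroring (\ref{energy}), and to supplement it with an $L^2$ estimate for $\Teps$ that yields the dissipation bound on $\Tepsx$.

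\emph{Mechanical energy.} We test the first equation in (\ref{0eps}) by $\veps$. Since $\veps=\vepsxx=0$ on $\pO$, we get
\[
\frac{1}{2}\frac{d}{dt}\io \veps^2 = -\eps\io \vepsxx^2 - \io \uepsx \vepsx + \io f(\Teps)\vepsx .
\]
Differentiating the second equation in $x$ and testing by $\uepsx$ gives
\[
\frac{1}{2}\frac{d}{dt}\io \uepsx^2 = -\eps\io \uepsxx^2 + \io \uepsx\vepsx ,
\]
where the boundary term vanishes because $\uepst=\ueps=0$ on $\pO$, so $\uepsxx=0$ there. Integrating the third equation over $\Om$ and using $\Tepsx=0$ on $\pO$ yields
\[
\frac{d}{dt}\io\Teps = -\io f(\Teps)\vepsx .
\]
Adding the three relations, the coupling terms cancel and we obtain
\[
\frac{1}{2}\io\veps^2+\frac{1}{2}\io\uepsx^2+\io\Teps \le \frac{1}{2}\io v_0^2+\frac{1}{2}\io u_{0x}^2+\io\Theta_0 .
\]
Since $\Teps>0$ by Lemma \ref{lem_loc}, this bounds $\sup_t\|\veps\|_{L^2}$, $\sup_t\|\uepsx\|_{L^2}$ and $\sup_t\|\Teps\|_{L^1}$ independently of $\eps$, even of $T$.

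\emph{Bound on $\int_0^T\io\Tepsx^2$.} We test the third equation by $\Teps$ and integrate by parts in the coupling term, using $\vepsx=(\veps)_x$ and $\veps=0$ on $\pO$:
\[
\frac{1}{2}\frac{d}{dt}\io\Teps^2+\io\Tepsx^2 = \io \veps\big(f'(\Teps)\Tepsx\Teps + f(\Teps)\Tepsx\big).
\]
This is exactly the quantity in (\ref{13.2}) with $(v,\Theta)=(\veps,\Teps)$ and $(\whv,\whT)=(0,0)$, with the sign arranged so that it matches. Hence Lemma \ref{lem13} with $\eta=\frac{1}{2}$ and $K$ taken from the energy bound gives
\[
\frac{d}{dt}\io\Teps^2+\io\Tepsx^2\le c\Big(1+\io\Tepsx^2\Big)\Big(\io\veps^2+\io\Teps^2\Big).
\]

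This last inequality is the main obstacle: the factor $\io\Tepsx^2$ on the right is not yet known to be integrable, so Gronwall cannot be applied directly. We therefore avoid the $\Theta_x^2$ weight and estimate directly. By (\ref{13.31}) and the $L^1$ bound, $\|\Teps\|_{L^\infty}^2\le c\io\Tepsx^2+c$. Together with (\ref{13.32}) and (\ref{13.33}) this gives
\[
\io \veps f(\Teps)\Tepsx \le c_3\|\veps\|_{L^2}\|\Teps\|_{L^\infty}\|\Tepsx\|_{L^2},
\]
and
\[
\io \veps f'(\Teps)\Teps\Tepsx\le c_3\|\veps\|_{L^2}\|\Teps\|_{L^\infty}\|\Tepsx\|_{L^2} .
\]
Both bounds are of order $\|\Tepsx\|_{L^2}^2$ with a prefactor proportional to $\|\veps\|_{L^2}$, so neither is absorbable unless the energy is small. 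To circumvent this we instead test by $\ln\Teps$ (the entropy), which is legitimate since $\Teps$ is positive, and we use $\frac{d}{dt}\io\Teps\ln\Teps$, or alternatively test by $-\frac{1}{\Teps}$:
\[
-\frac{d}{dt}\io\ln\Teps + \io\frac{\Tepsx^2}{\Teps^2}=\io\frac{f(\Teps)}{\Teps}\vepsx .
\]
After integrating by parts, the right-hand side becomes $-\io(\frac{f(\Teps)}{\Teps})'\Tepsx\veps$. This is bounded via $|(f(\xi)/\xi)'|\le c/\xi$, which holds since $f\in C^2$ with $f(0)=0$ and $f''$ bounded, so the term is absorbed by Young's inequality against $\io\Tepsx^2/\Teps^2$. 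Since $\io\ln\Teps\le\io\Teps$ is bounded and $-\io\ln\Theta_0$ is finite by (\ref{i0}), this controls $\int_0^T\io\Tepsx^2/\Teps^2$. We then plan to interpolate this with $\sup\|\Teps\|_{L^1}$ to reach $\int_0^T\io\Tepsx^2$, for instance by bounding $\Tepsx^2\le\|\Teps\|_{L^\infty}^2\Tepsx^2/\Teps^2$ and using a Gagliardo--Nirenberg control of $\|\Teps\|_{L^\infty}$ through $\|(\Teps^{1/2})_x\|$. If that interpolation fails to close, we fall back on a Gronwall argument for $\io\Teps^2$, splitting time into short intervals on which the energy bounds allow absorption.
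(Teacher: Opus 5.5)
Your energy identity is correct and matches the paper's first step. The bound on $\int_0^T\io\Tepsx^2$, however, is never established. The obstruction you found after testing the third equation by $\Teps$ comes from interpolating $\|\Teps\|_{L^\infty}$ against the $L^1$ norm via (\ref{13.31}), which makes the right-hand side quadratic in $\|\Tepsx\|_{L^2}$. The missing idea is to interpolate against the $L^2$ norm, which is the very quantity you are Gr\"onwalling. By (\ref{13.3}), $\|\Teps\|_{L^\infty}^2\le c\|\Tepsx\|_{L^2}\|\Teps\|_{L^2}+c\|\Teps\|_{L^2}^2$. Combined with your own H\"older bound and $\|\veps\|_{L^2}\le c$, this gives
\[
c_3\|\veps\|_{L^2}\|\Teps\|_{L^\infty}\|\Tepsx\|_{L^2}
\le c\|\Tepsx\|_{L^2}^{3/2}\|\Teps\|_{L^2}^{1/2}+c\|\Tepsx\|_{L^2}\|\Teps\|_{L^2}
\le \frac{1}{4}\io\Tepsx^2+c\io\Teps^2 .
\]
Applied to both coupling terms, this yields $\frac{d}{dt}\io\Teps^2+\io\Tepsx^2\le c\io\Teps^2$. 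Gr\"onwall together with $\Theta_0\in L^2(\Om)$ then bounds both $\sup_{t<T}\io\Teps^2$ and $\int_0^T\io\Tepsx^2$; this is essentially the paper's argument. Equivalently, Lemma \ref{lem13} does the job if you put the solution into the hatted slot, $(v,\Theta)=(0,0)$ and $(\whv,\whT)=(\veps,\Teps)$. The left side of (\ref{13.2}) is the same quantity, (\ref{13.01}) only requires $\|\veps\|_{L^2}\le K$, and the weight $1+\io\Theta_x^2$ collapses to $1$. This mirrors how the trivial solution enters Lemma \ref{lem21} in Lemma \ref{lem24}.

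The alternative routes you propose do not close. The entropy estimate (testing by $-1/\Teps$) is correct and gives $\int_0^T\io\Tepsx^2/\Teps^2\le C$. But together with $\sup_t\|\Teps\|_{L^1}$, one only obtains pointwise in time something like $\|\Teps\|_{L^\infty}\le c\,(1+\io\Tepsx^2/\Teps^2)$. Then $\io\Tepsx^2\le\|\Teps\|_{L^\infty}^2\io\Tepsx^2/\Teps^2$ requires the time integral of a superlinear power of $\io\Tepsx^2/\Teps^2$, which an $L^1$-in-time bound does not provide. A further estimate would be needed, and the natural one is exactly the $L^2$ test above. Splitting $(0,T)$ into short intervals does not help either. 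The term you could not absorb carries the factor $\|\veps(\cdot,t)\|_{L^2}^2$ pointwise in time, and the energy bound does not make that factor small on short intervals.
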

\proof
  By combining all three sub-problems of (\ref{0eps}) in a straighforward fashion, we obtain that
  \bas
	\frac{d}{dt} \bigg\{
	\frac{1}{2} \io \veps^2
	+ \frac{1}{2} \io \uepsx^2
	+ \io \Teps \bigg\}
	= - \eps \io \vepsxx^2
	-  \eps \io \uepsxx^2 
	\qquad \mbox{for all } \eps\in (0,1),
  \eas
  and hence particularly infer that
  \be{9.2}
	\io \veps^2(\cdot,t)
	\le c_1 := \io v_0^2 
	+ \io u_{0x}^2
	+ 2 \io \Theta_0
  \ee
  To make suitable use of this, for $\eps\in (0,1)$
  we test the third equation in (\ref{0eps}) by $\Teps$ to see that thanks to Young's inequality,
  \bas
	\frac{1}{2} \frac{d}{dt} \io \Teps^2 + \io \Tepsx^2
	&=& \io f'(\Teps) \Teps \Tepsx \veps
	+ \io f(\Teps) \Tepsx \veps \\
	&\le& \frac{1}{4} \io \Tepsx^2
	+ \io f'^2(\Teps) \Teps^2 \veps^2
	+ \frac{1}{4} \io \Tepsx^2
	+ \io f^2(\Teps) \veps^2
	\qquad \mbox{for all } t>0,
  \eas  
  so that due to (\ref{9.2}),
  \bas
	\frac{d}{dt} \io \Teps^2
	+ \io \Tepsx^2
	\le c_2 \|\Teps\|_{L^\infty(\Om)}^2
	\qquad \mbox{for all } t>0,
  \eas
  where $c_2:=2c_1 \cdot\sup_{\xi>0} \big\{ f'^2(\xi) + \frac{f^2(\xi)}{\xi^2}\big\}$ is finite due to (\ref{f}).
  Now taking $c_3>0$ such that in line with a Gagliardo-Nirenberg inequality we have
  \bas
	\|\psi\|_{L^\infty(\Om)}^2 \le c_3\|\psi_x\|_{L^2(\Om)} \|\psi\|_{L^2(\Om)}
	+ c_3 \|\psi\|_{L^2(\Om)}^2
	\qquad \mbox{for all } \psi\in C^1(\bom),
  \eas
  we can here again rely on Young's inequality in estimating
  \bas
	c_2 \|\Teps\|_{L^\infty(\Om)}^2
	\le c_2 c_3 \|\Tepsx\|_{L^2(\Om)} \|\Teps\|_{L^2(\Om)}
	+ c_2 c_3 \|\Teps\|_{L^2(\Om)}^2
	\le \frac{1}{2} \io \Tepsx^2
	+ c_4 \io \Teps^2
	\qquad \mbox{for all } t>0
  \eas
  with $c_4:=\frac{c_2^2 c_3^2}{2} + c_2 c_3$.
  Therefore,
  \be{9.3}
	\frac{d}{dt} \io \Teps^2 + \frac{1}{2} \io \Tepsx^2
	\le c_4 \io \Teps^2
	\qquad \mbox{for all } t>0,
  \ee
  whence explicitly using (\ref{i1}) now we may use a Gr\"onwall lemma to see that
  \be{9.4}
	\io \Teps^2(\cdot,t) \le c_5 := \bigg\{ \io \Theta_0^2 \bigg\} \cdot e^{c_4 T}
	\qquad \mbox{for all } t\in (0,T),
  \ee
  and that thus, by direct integration in (\ref{9.3}),
  \bas
	\frac{1}{2} \int_0^T \io \Tepsx^2
	\le \io \Theta_0^2
	+ c_4 \int_0^T \io \Teps^2
	\le \io \Theta_0^2 + c_4 c_5 T,
  \eas
  which together with (\ref{9.2}) and (\ref{9.4}) yields the claim.
\qed
Based on this and again Lemma \ref{lem13},
we can now rely on the fact that the parabolic system (\ref{0eps}) is autonomous to obtain the following 
variant of Lemma \ref{lem21}.
\begin{lem}\label{lem10}
  Assume (\ref{f}) and (\ref{i0}).
  Then for all $T>0$ there exists $C(T)>0$ such that with $((\veps,\ueps,\Teps))_{\eps\in (0,1)}$ taken from Lemma \ref{lem_loc},
  \bea{10.1}
	& & \hs{-20mm}
	\io |\veps(\cdot,t+h)-\veps(\cdot,t)|^2
	+ \io |\uepsx(\cdot,t+h)-\uepsx(\cdot,t)|^2
	+ \io |\Teps(\cdot,t+h)-\Teps(\cdot,t)|^2 \nn\\
	&\le& C \cdot \bigg\{
	\io |\veps(\cdot,h)-v_0|^2
	+ \io |\uepsx(\cdot,h)-u_{0x}|^2
	+ \io |\Teps(\cdot,h)-\Theta_0|^2 \bigg\}
  \eea
  for all $t\in (0,T), \eps\in (0,1)$ and $h\in (0,1)$.
\end{lem}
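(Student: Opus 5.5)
Fix $\eps\in (0,1)$ and $h\in (0,1)$. Since (\ref{0eps}) is autonomous, the shifted triple $(\whve,\whue,\whTe)(x,t):=(\veps,\ueps,\Teps)(x,t+h)$ is again a classical solution of (\ref{0eps}), with initial data $(\veps,\ueps,\Teps)(\cdot,h)$. We compare it with $(\veps,\ueps,\Teps)$ by means of the differences
\bas
	V:=\whve-\veps, \qquad U:=\whue-\ueps, \qquad D:=\whTe-\Teps,
\eas
and derive a Gr\"onwall-type inequality for
\bas
	z(t):=\frac{1}{2}\io V^2+\frac{1}{2}\io U_x^2+\io D^2 .
\eas

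For the mechanical part, test the first equation for $V$ by $V$ and the $x$-derivative of the second equation for $U$ by $U_x$. The boundary conditions $V=V_{xx}=0$ and $U=0$ on $\pO$ make all needed integrations by parts legitimate; note $U_t=\eps U_{xx}+V=0$ on $\pO$, so $U_{xx}=0$ there too. The cross terms $\io U_{xx}V$ and $\io V_xU_x$ cancel, the $\eps$-terms give the nonpositive contributions $-\eps\io V_{xx}^2-\eps\io U_{xx}^2$, and what remains is
\bas
	-\io\big\{f'(\whTe)\whTex-f'(\Teps)\Tepsx\big\}\,V .
\eas
For the thermal part, test the difference of the third equations by $D$. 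Writing $f(\Theta)v_x$ as $(f(\Theta)v)_x-f'(\Theta)\Theta_xv$ and using $\Tepsx=0$ on $\pO$, we get
\bas
	\frac12\frac{d}{dt}\io D^2+\io D_x^2 = \io\big\{f'\Theta_xv-\ldots\big\}D+\io\big\{f(\Theta)v-\ldots\big\}D_x ,
\eas
which is exactly the structure treated in (\ref{13.2}).

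Next apply Lemma \ref{lem13} with $\eta=\frac14$. Take $(v,\Theta):=(\whve,\whTe)$ in the roles of the unhatted quantities and $(\whv,\whT):=(\veps,\Teps)$, so that $\Theta_x$ in the final bound is $\whTex$. The hypothesis (\ref{13.01}) follows from Lemma \ref{lem9} applied on $(0,T+1)$, which gives $\|\veps(\cdot,t)\|_{L^2(\Om)}+\|\Teps(\cdot,t+h)\|_{L^1(\Om)}\le K:=C(T+1)$. Adding the two testing identities and absorbing the $\eta$-terms into $\io D_x^2$ yields
\bas
	z'(t)\le b(t)z(t), \qquad b(t):=c\Big\{1+\io\Tepsx^2(\cdot,t+h)\Big\}.
\eas
Since $\int_0^T b\le c\,(T+C(T+1))$ uniformly in $\eps$ and $h$, again by Lemma \ref{lem9}, Gr\"onwall's lemma gives $z(t)\le e^{\int_0^T b}\,z(0)$. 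Here $z(0)$ is precisely the bracket on the right of (\ref{10.1}) up to constant factors, because $(\veps,\ueps,\Teps)(\cdot,0)=(v_0,u_0,\Theta_0)$. This proves (\ref{10.1}).

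The main obstacle is the careful orientation in Lemma \ref{lem13}. The bound (\ref{13.01}) involves $\whv$ and $\Theta$, coming from \emph{different} solutions, so the roles must be assigned as above, and the factor $\io\Theta_x^2$ must be one that Lemma \ref{lem9} controls in $L^1_t$, here taken at the shifted time $t+h$. A smaller point is justifying the differentiations from the regularity in Lemma \ref{lem_loc}; it suffices that $\veps\in C^{4,1}$, $u_{\eps xxt}$ is continuous and $\Theta_{\eps xt}$ is continuous.
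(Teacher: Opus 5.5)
Your proposal is correct and follows the paper's proof. You use autonomy of (\ref{0eps}) to compare with the time-shifted solution and test the difference system so that the cross terms cancel. The paper tests the $u$-difference equation by $(\ueps-\whue)_{xx}$ rather than testing its $x$-derivative by $U_x$, which amounts to the same. You then bound the nonlinear terms via Lemma \ref{lem13} with constants from Lemma \ref{lem9}, and close with Gr\"onwall.

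The one deviation is that you assign the roles in Lemma \ref{lem13} the other way round. The paper takes the unshifted solution as $(v,\Theta)$, so its weight is $\io\Tepsx^2(\cdot,t)$. Contrary to your remark that the roles \emph{must} be assigned as you did, both orientations work, because Lemma \ref{lem9} on $(0,T+1)$ bounds $\veps$ and $\Teps$ at both the original and the shifted times.
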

\proof
  We fix $T>0$, and then infer from Lemma \ref{lem9} that there exists $c_1=c_1(T)>0$ with the property that if
  for $h\in (0,1)$ and $\eps\in (0,1)$
  we write $(\whve,\whue,\whTe)(x,t):=(\veps,\ueps,\Teps)(x,t+h)$, $(x,t)\in\bom\times [0,\infty)$, then
  \be{33.31}
	\io \Teps(\cdot,t) \le c_1
	\quad \mbox{and} \quad
	\io \whve^2(\cdot,t) \le c_1
	\qquad \mbox{for all $t\in (0,T)$ and } \eps\in (0,1)
  \ee
  as well as
  \be{33.3}
	\int_0^T \io \Tepsx^2 \le c_1
	\qquad \mbox{for all } \eps\in (0,1).
  \ee
  Now taking any $\eps\in (0,1)$, we observe that both $(\veps,\ueps,\Teps)$ and $(\whve,\whue,\whTe)$ solve 
  the autonomous boundary value problem (\ref{0eps}), whence on linearly combining we see that
  \bas
	(\veps-\whve)_t = -\eps (\veps-\whve)_{xxxx} + (\ueps-\whue)_{xx} - \big\{ f'(\Teps)\Tepsx - f'(\whTe)\whTex \big\}
  \eas
  and
  \bas
	(\ueps-\whue)_t=\eps (\ueps-\whue)_{xx} + \veps-\whve
  \eas
  as well as
  \bas
	(\Teps-\whTe)_t 
	&=& (\Teps-\whTe)_{xx} - f(\Teps) \vepsx + f(\whTe) \whvex \nn\\
	&=& (\Teps-\whTe)_{xx} + \big\{ f'(\Teps)\Tepsx \veps - f'(\whTe)\whTex \whve\big\}
	- \pa_x \big\{ f(\Teps) \veps - f(\whTe)\whve\big\}
  \eas
  in $\Om\times (0,T)$.
  Using $\veps-\whve$, $(\ueps-\whue)_{xx}$ and $\Teps-\whTe$ as test functions here, we obtain that
  \bas
	& & \hs{-30mm}
	\frac{1}{2} \frac{d}{dt} \io (\veps-\whve)^2
	+ \eps \io (\veps-\whve)_{xx}^2 \\
	&=& \io (\ueps-\whue)_{xx}(\veps-\whve)
	- \io \big\{ f'(\Teps)\Tepsx - f'(\whTe)\whTex\big\} \cdot (\veps-\whve) \\
	&=& - \eps \io (\ueps-\whue)_{xx}^2
	- \frac{1}{2} \frac{d}{dt} \io (\uepsx-\whuex)^2 \\
	& & - \io \big\{ f'(\Teps)\Tepsx - f'(\whTe)\whTex\big\} \cdot (\veps-\whve) 
  \eas
  and
  \bas
	\frac{1}{2} \frac{d}{dt} \io (\Teps-\whTe)^2
	+ \io (\Tepsx-\whTex)^2
	&=& \io \big\{ f'(\Teps)\Tepsx\veps - f'(\whTe) \whTex \whve\big\} \cdot (\Teps-\whTe) \\
	& & + \io \big\{ f(\Teps)\veps-f(\whTe)\whve\big\}\cdot (\Tepsx-\whTex)
  \eas
  for all $t\in (0,T)$. Therefore, in particular,
  \bas
	& & \hs{-20mm}
	\frac{d}{dt} \bigg\{ 
	\frac{1}{2} \io (\veps-\whve)^2
	+ \frac{1}{2} \io (\uepsx-\whuex)^2
	+ \frac{1}{2} \io (\Teps-\whTe)^2 \bigg\}
	+ \io (\Tepsx-\whTex)^2 \nn\\
	&\le& 
	- \io \big\{ f'(\Teps)\Tepsx - f'(\whTe)\whTex\big\} \cdot (\veps-\whve) \\ 
	& & + \io \big\{ f'(\Teps)\Tepsx\veps - f'(\whTe) \whTex \whve\big\} \cdot (\Teps-\whTe) \\
	& & + \io \big\{ f(\Teps)\veps-f(\whTe)\whve\big\}\cdot (\Tepsx-\whTex)
	\qquad \mbox{for all } t\in (0,T),
  \eas
  where an application of Lemma \ref{lem13} on the basis of (\ref{33.31}) shows that if we let 
  $c_2\equiv c_2(T):=\Gamma_1(\frac{1}{2},c_1)$ with $\Gamma_1(\cdot,\cdot)$ as provided there,
  \bas
	& & \hs{-16mm}
	- \io \big\{ f'(\Teps)\Tepsx - f'(\whTe)\whTex\big\} \cdot (\veps-\whve) \\
	& & \hs{-12mm}
	+ \io \big\{ f'(\Teps)\Tepsx\veps - f'(\whTe) \whTex \whve\big\} \cdot (\Teps-\whTe)  
	+ \io \big\{ f(\Teps)\veps-f(\whTe)\whve\big\}\cdot (\Tepsx-\whTex)  \\
	&\le& \Big(\frac{1}{2}+\frac{1}{2}\Big) \io (\Tepsx-\whTex)^2
	+ (c_2+c_2) \cdot \bigg\{ 1 + \io \Tepsx^2 \bigg\} \cdot \bigg\{
		\io (\veps-\whve)^2
	+ \io (\Teps-\whTe)^2 \bigg\}
  \eas
  for all $t\in (0,T)$.
  Consequently,
  \bas
	y(t):=\io (\veps(\cdot,t)-\whve(\cdot,t))^2
	+ \io (\uepsx(\cdot,t)-\whuex(\cdot,t))^2
	+ \io (\Teps(\cdot,t)-\whTe(\cdot,t))^2,
	\qquad t\in [0,T],
  \eas
  satisfies  
  \be{33.6}
	y'(t) 
	\le 4c_2 \cdot \bigg\{ 1 + \io \Tepsx^2 \bigg\} \cdot y(t)
	\qquad \mbox{for all } t\in (0,T),
  \ee
  which implies that thanks to a Gr\"onwall lemma and (\ref{33.3}),
  \bas
	y(t)
	\le y(0) \cdot \exp \Bigg\{ 4c_2 \int_0^t \bigg\{ 1 + \io \Tepsx^2(\cdot,s) \bigg\} ds \Bigg\}
	\le y(0) \cdot e^{4c_2\cdot (T+c_1)}
	\qquad \mbox{for all } t\in (0,T),
  \eas
  and thereby yields (\ref{10.1}).
\qed
In passing to the limit in (\ref{10.1}) on the basis of the weak approximation features in (\ref{conv}), 
we will utilize the simple stability property recorded in the following.
\begin{lem}\label{lem55}
  Let $N\ge 1$, $T>0$ and $(\chi_j)_{j\in\N} \subset L^2(\Om\times (0,T);\R^N)$ and $\chi\in L^2(\Om\times (0,T);\R^N)$ 
  be such that
  \be{55.1}
	\chi_j \wto \chi
	\quad \mbox{in } L^2(\Om\times (0,T);\R^N)
	\qquad \mbox{as } j\to\infty,
  \ee
  and that 
  \be{55.2}
	\io |\chi_j(\cdot,t)|^2 \le K
	\qquad \mbox{for a.e.~} t\in (0,T)
  \ee
  with some $K>0$. Then
  \be{55.3}
	\io |\chi(\cdot,t)|^2 \le K
	\qquad \mbox{for a.e.~} t\in (0,T).
  \ee
\end{lem}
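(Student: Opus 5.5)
The plan is to use weak lower semicontinuity of the $L^2$ norm, localized in time. Fix a nonnegative $\zeta\in L^\infty((0,T))$. The map
\[
\psi\mapsto\int_0^T \zeta(t)\io|\psi(x,t)|^2\,dx\,dt
\]
is a continuous convex functional on $L^2(\Om\times(0,T);\R^N)$. By (\ref{55.1}) it is therefore weakly lower semicontinuous. Concretely, one tests $\chi_j$ against $\zeta\chi$ and applies the Cauchy--Schwarz inequality in the weighted space. Using (\ref{55.2}), this gives
\[
\int_0^T\zeta(t)\io|\chi(\cdot,t)|^2\,dt\le\liminf_{j\to\infty}\int_0^T\zeta(t)\io|\chi_j(\cdot,t)|^2\,dt\le K\int_0^T\zeta(t)\,dt .
\]

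Next I localize. Set $m(t):=\io|\chi(\cdot,t)|^2$, which lies in $L^1((0,T))$ by Fubini. Take $\zeta=\mathbf 1_{(t_0-r,t_0+r)}$ and divide by $2r$. Then $\frac1{2r}\int_{t_0-r}^{t_0+r}m\le K$ for every admissible $t_0$ and $r$. At every Lebesgue point $t_0$ of $m$, letting $r\searrow0$ yields $m(t_0)\le K$, and this is (\ref{55.3}).

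An alternative to the Lebesgue-point step: choose $\zeta=\mathbf 1_{E}$ with $E=\{t: m(t)>K\}$. This immediately forces $\int_E (m-K)\le0$, hence $|E|=0$. This version avoids the differentiation theorem altogether, so I would likely use it.

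There is no real obstacle here. The only point needing care is that the weighted lower semicontinuity inequality is justified. It follows from
\[
\int_0^T\!\!\io\zeta|\chi|^2=\lim_{j\to\infty}\int_0^T\!\!\io\zeta\,\chi_j\cdot\chi\le\liminf_{j\to\infty}\Big(\int_0^T\!\!\io\zeta|\chi_j|^2\Big)^{1/2}\Big(\int_0^T\!\!\io\zeta|\chi|^2\Big)^{1/2}.
\]
The limit in the first step holds because $\zeta\chi\in L^2$ is an admissible test function for the weak convergence (\ref{55.1}), since $\zeta$ is bounded.
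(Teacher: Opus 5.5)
Your proposal is correct, and your primary argument is essentially the paper's. The paper tests the weak convergence against $\zeta\chi$, with $\zeta$ the indicator of a one-sided interval $(t_0,t_0+\delta)$ rather than your symmetric one, uses weak lower semicontinuity of the $L^2$ norm to get $\frac{1}{\delta}\int_{t_0}^{t_0+\delta}\io|\chi|^2\le K$, and concludes at Lebesgue points of $t\mapsto\io|\chi(\cdot,t)|^2$. Your preferred variant with $\zeta=\mathbf{1}_E$, $E=\{m>K\}$, rests on the same weighted lower semicontinuity inequality but localizes directly to the exceedance set, so it gives $\int_E(m-K)\le 0$ and hence $|E|=0$ without the Lebesgue differentiation theorem, which is a slightly cleaner way to the same conclusion.
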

\proof
  Let $S\subset (0,T)$ be a null set such that each 
  $t\in (0,T)\sm S$ is a Lebesgue point of the integrable function 
  $(0,T)\ni t \mapsto \io |\chi(\cdot,t)|^2$.
  For arbitrary $t_0\in (0,T)\sm S$ 
  and $\del\in (0,T-t_0)$, we then let $\zeta(t):=1$ for $t\in (t_0,t_0+\del)$
  and $\zeta(t):=0$ for $t\in\R\sm (t_0,t_0+\del)$, and can then use (\ref{55.1}) and (\ref{55.2})
  to see that due to lower semicontinuity
  of the norm in $L^2(\Om\times (0,T);\R^N)$ with respect to weak convergence,
  \bea{55.4}
	& & \hs{-20mm}
	\frac{1}{\del} \int_{t_0}^{t_0+\del} \io |\chi|^2
	= \frac{1}{\del} \int_0^T \io |\zeta(t)\chi(x,t)|^2 dxdt \nn\\
	&\le&\liminf_{j\to\infty} \frac{1}{\del} \int_0^T \io |\zeta(t)\chi_j(x,t)|^2 dxdt 
	= \liminf_{j\to\infty} \frac{1}{\del} \int_{t_0}^{t_0+\del} \io |\chi_j(x,t)|^2 dxdt
	\le K,
  \eea
  because clearly $\Om\times (0,T)\ni (x,t)\mapsto \zeta(t)\chi_j(x,t)$ converges to
  $\Om\times (0,T)\ni (x,t)\mapsto \zeta(t)\chi(x,t)$ weakly in $L^2(\Om\times (0,T);\R^N)$ as $j\to\infty$.
  According to the Lebesgue point property of $t_0$, we may let $\del\searrow 0$ here to infer that
  $\io |\chi(\cdot,t_0)|^2 \le K$, so that (\ref{55.3}) results from the fact that $|S|=0$.
\qed
We can thereby make sure that under the strong assumption on the initial data made in (\ref{i0}), the weak
solution obtained in Lemma \ref{lem51} actually enjoys the regularity properties in (\ref{reg}).
\begin{lem}\label{lem111}
  Assume (\ref{f}) and (\ref{i0}). Then the problem (\ref{0}) admits at least one global weak solution $(u,\Theta)$, 
  in the sense of Definition \ref{dw}, which additionally satisfies (\ref{reg}).
\end{lem}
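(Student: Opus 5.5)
We take the global weak solution $(u,\Theta)$ and the sequence $\eps_j\searrow 0$ from Lemma \ref{lem51}, and show that it has the regularity in (\ref{reg}) by passing to the limit in the difference estimate of Lemma \ref{lem10} and using the short-time bounds of Lemma \ref{lem6}.

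\textbf{Step 1: the right-hand side of (\ref{10.1}) is small uniformly in $\eps$.} Fix $T>0$ and let $h\in(0,\tau)$, with $\tau$ from Lemma \ref{lem6}. From the first equation in (\ref{0eps}), tested against $\veps(\cdot,s)-v_0$ and integrated over $s\in(0,h)$, we obtain
\bas
	\io |\veps(\cdot,h)-v_0|^2 \le C h .
\eas
Here the terms $\eps\vepsxxxx$ and $\uepsxx$ are integrated by parts onto the smooth $v_0$ and onto $\vepsx$, and both are controlled by (\ref{6.1}) and (\ref{6.2}). The same argument applied to the second and third equations gives
\bas
	\io |\uepsx(\cdot,h)-u_{0x}|^2 + \io |\Teps(\cdot,h)-\Theta_0|^2 \le C h ,
\eas
now using (\ref{6.2}), (\ref{6.3}), (\ref{6.4}) and Lemma \ref{lem5}. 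Alternatively, one can argue with time-derivative bounds in negative Sobolev norms together with the uniform $W^{1,2}$ and $W^{2,2}$ bounds and interpolation. Either way, the right-hand side of (\ref{10.1}) is at most $\omega(h)$ with $\omega(h)\to 0$ as $h\searrow 0$, uniformly in $\eps\in(0,1)$. This is the key step, and the place where the smooth data (\ref{i0}) enter through Lemma \ref{lem6}.

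\textbf{Step 2: pass to the limit $\eps=\eps_j\to 0$.} Apply Lemma \ref{lem55} to
\bas
	\chi_j := \big(v_{\eps_j}(\cdot,\cdot+h)-v_{\eps_j},\ u_{\eps_j x}(\cdot,\cdot+h)-u_{\eps_j x},\ \Theta_{\eps_j}(\cdot,\cdot+h)-\Theta_{\eps_j}\big).
\eas
By (\ref{conv}) these converge weakly in $L^2(\Om\times(0,T))$, and (\ref{10.1}) gives
\be{plan_h}
	\io |u_t(\cdot,t+h)-u_t(\cdot,t)|^2 + \io |u_x(\cdot,t+h)-u_x(\cdot,t)|^2 + \io |\Theta(\cdot,t+h)-\Theta(\cdot,t)|^2 \le C(T)\,\omega(h)
\ee
for a.e.\ $t\in(0,T)$. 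The same lemma, combined with Lemma \ref{lem9} and the energy bound on $\uepsx$, yields $u_t,u_x,\Theta\in L^\infty((0,T);L^2(\Om))$. In addition, $\Theta_x\in L^2(\Om\times(0,T))$ by weak lower semicontinuity together with (\ref{9.1}).

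\textbf{Step 3: continuity in time.} Estimate (\ref{plan_h}) is a uniform $L^2$ modulus of continuity in time, valid off a null set. Hence $(u_t,u_x,\Theta)$ coincides a.e.\ with a function that is uniformly continuous from $[0,T]$ into $L^2(\Om)$. Because $u$ solves the wave equation weakly, $u\in C^0([0,T];L^2(\Om))$ and $u_t$ is its distributional time derivative, the representative of $u_x$ is consistent with $u$, so $u\in C^0([0,T];W_0^{1,2}(\Om))$.

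For the initial values, we use that at $t=0$ the bound from Step 1 passes to the limit as well. By weak lower semicontinuity, $\io|u_t(\cdot,h)-v_0|^2+\dots\le C\omega(h)$ for a.e.\ small $h$. Combined with the weak formulation, this identifies the continuous extensions at $t=0$ as $(v_0,u_{0x},\Theta_0)$.

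Since $T$ is arbitrary, this gives (\ref{reg}).

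The main obstacle is Step 1: showing that the right-hand side of (\ref{10.1}) tends to zero as $h\searrow 0$ uniformly in $\eps$. Handling the $-\eps\vepsxxxx$ term and the nonlinear term $f(\Teps)\vepsx$ under only the bounds of Lemma \ref{lem6} requires some care.
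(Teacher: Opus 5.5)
Your proposal is correct and follows the paper's proof: you take the weak solution from Lemma \ref{lem51}, pass to the limit in the shifted-difference estimate of Lemma \ref{lem10} via Lemma \ref{lem55}, and use Lemma \ref{lem6} to make the right-hand side of (\ref{10.1}) small as $h\searrow 0$. The one difference is in Step 1. The paper gets this smallness from strong convergence of $(u_{\eps_j},v_{\eps_j},\Theta_{\eps_j})$ in $C^0$ on $[0,\tau]$, a compactness consequence of Lemma \ref{lem6}. Your direct testing of the three equations against the differences from the initial data instead gives an explicit $O(h)$ bound for every $\eps\in(0,1)$, not just along $(\eps_j)$. That is slightly more self-contained, and you also make explicit the identification of the values at $t=0$, which the paper leaves implicit.
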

\proof
  We let $(\veps,\ueps,\Teps)$ denote the solution of (\ref{0eps})-(\ref{0epsii}) for $\eps\in (0,1)$,
  and employ Lemma \ref{lem6} to find $\tau>0$ such that 
  \bas
	(\veps)_{\eps\in (0,1)}
	\mbox{ is bounded in } C^0([0,\tau];W^{1,2}(\Om)),
  \eas
  that
  \bas
	(\ueps)_{\eps\in (0,1)}
	\mbox{ is bounded in } C^0([0,\tau];W^{2,2}(\Om)),
  \eas
  and that
  \bas
	(\Teps)_{\eps\in (0,1)}
	\mbox{ is bounded in } C^0([0,\tau];W^{1,2}(\Om)).
  \eas
  By means of Lemma \ref{lem51}, we thus obtain a sequence $(\eps_j)_{j\in\N} \subset (0,1)$ such that $\eps_j\searrow 0$
  as $j\to\infty$, and such that with some global weak solution $(u,\Theta)$, besides (\ref{conv}) we have
  \be{111.6}
	\ueps \to u
	\qquad \mbox{in } C^0([0,\tau];C^1(\bom))
  \ee
  and
  \be{111.5}
	\veps \to u_t
	\qquad \mbox{in } C^0(\bom\times [0,\tau])
  \ee
  and
  \be{111.7}
	\Teps \to \Theta
	\qquad \mbox{in } C^0(\bom\times [0,\tau])
  \ee
  as $\eps=\eps_j \searrow 0$.
  To derive regularity properties on large 
  time scales,
  we may rely on 
  Lemma \ref{lem9} and Lemma \ref{lem10} to find
  $(c_i(T))_{T>0} \subset (0,\infty)$, $i\in\{1,2\}$, such that 
  \be{111.77}
	\int_0^T \io \Tepsx^2 \le c_1(T)
	\qquad \mbox{for all $T>0$ and } \eps\in (0,1),
  \ee
  and that whenever $T>0$, $t\in (0,T), h\in (0,1)$ and $\eps\in (0,1)$,
  \bea{111.66}
	& & \hs{-20mm}
	\io \big|\veps(\cdot,t+h)-\veps(\cdot,t)\big|^2
	+ \io \big|\uepsx(\cdot,t+h)-\uepsx(\cdot,t)\big|^2
	+ \io \big|\Teps(\cdot,t+h)-\Teps(\cdot,t)\big|^2 \nn\\
	&\le& c_2(T) \cdot \bigg\{
	\io \big| \veps(\cdot,h)-v_0 \big|^2
	+ \io \big| \uepsx(\cdot,h)-u_{0x}\big|^2
	+ \io \big| \Teps(\cdot,h)-\Theta_0\big|^2 \bigg\}.
  \eea
  Here, (\ref{111.77}) together with (\ref{conv}) warrants that
  \be{111.78}
	\Theta_x \in L^2_{loc}(\bom\times [0,\infty)),
  \ee
  and in order to see how the continuity features in (\ref{reg}) can be inferred from (\ref{111.66}),
  we note that (\ref{111.6})-(\ref{111.7}) ensure that
  \bas
	\sup_{\eps\in (0,1)} \bigg\{
	\io \big| \veps(\cdot,h)-v_0 \big|^2
	+ \io \big| \uepsx(\cdot,h)-u_{0x}\big|^2
	+ \io \big| \Teps(\cdot,h)-\Theta_0\big|^2 \bigg\}
	\to 0
	\qquad \mbox{as } h\searrow 0.
  \eas
  whence given any $T>0$, for arbitrary $\eta>0$ we can pick $h_0(\eta,T)\in (0,1)$ such that
  for all $h\in (0,h_0(\eta,T))$ and $\eps\in (0,1)$ we have
  \bas
	\sup_{\eps\in (0,1)} \bigg\{
	\io \big| \veps(\cdot,h)-v_0 \big|^2
	+ \io \big| \uepsx(\cdot,h)-u_{0x}\big|^2
	+ \io \big| \Teps(\cdot,h)-\Theta_0\big|^2 \bigg\} 
	\le \eta.
  \eas
  Since by (\ref{conv}) we have
  \bas
	(\veps,\ueps,\Teps)(\cdot,\cdot+h) - (\veps,\ueps,\Teps)
	\wto
	(u_t,u,\Theta)(\cdot,\cdot+h) - (u_t,u,\Theta)
  \eas
  in $L^2(\Om\times (0,T))$ as $\eps=\eps_j\searrow 0$, 
  from (\ref{111.66}) and Lemma \ref{lem55} we infer that
  \bas
	& & \hs{-20mm}
	{\rm{ess}} \sup_{\hs{-4mm} t\in (0,T)} \bigg\{ 
	\io \big|u_t(\cdot,t+h)-u_t(\cdot,t)\big|^2
	+ \io \big| u_x(\cdot,t+h)-u_x(\cdot,t)\big|^2 \\
	& & 
	+ \io \big| \Theta(\cdot,t+h)-\Theta(\cdot,t)\big|^2 \bigg\}
	\le \eta
	\quad \mbox{for all } h\in \big(0,h(\eta,T)\big).
  \eas
  As $\eta>0$ and $T>0$ were arbitrary, upon modifying on a null set of times if necessary and recalling (\ref{111.78})
  we thereby obtain (\ref{reg}).
\qed
\subsection{General initial data from $W_0^{1,2}(\Om) \times L^2(\Om) \times L^2(\Om)$.
Proofs of Theorem \ref{theo99} and Proposition \ref{prop98}}
It is now sufficient to return to Lemma \ref{lem97} to cover arbitrary initial data fulfilling (\ref{init})
by means of another approximation argument, relying on Lemma \ref{lem111} and, essentially, the density
of $C_0^\infty(\Om)$ in $W_0^{1,2}(\Om)$ and $L^2(\Om)$.
\begin{lem}\label{lem11}
  Assume (\ref{f}) and (\ref{init}).
  Then the problem (\ref{0}) admits at least one global weak solution satisfying (\ref{reg}).
\end{lem}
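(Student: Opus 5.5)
We approximate general data satisfying (\ref{init}) by smooth data of the type in (\ref{i0}) and pass to the limit via the Cauchy property from Lemma \ref{lem97}. Given $u_0\in W_0^{1,2}(\Om)$, $u_{0t}\in L^2(\Om)$ and nonnegative $\Theta_0\in L^2(\Om)$, we choose $(u_0^{(k)})_{k\in\N}\subset C_0^\infty(\Om)$ with $u_0^{(k)}\to u_0$ in $W_0^{1,2}(\Om)$, and $(u_{0t}^{(k)})_{k\in\N}\subset C_0^\infty(\Om)$ with $u_{0t}^{(k)}\to u_{0t}$ in $L^2(\Om)$. For the temperature we set $\Theta_0^{(k)}:=\psi_k+\frac1k$, where $\psi_k\in C_0^\infty(\Om)$ is nonnegative and $\psi_k\to\Theta_0$ in $L^2(\Om)$; such $\psi_k$ can be obtained, e.g., by cutting off $\Theta_0$ near $\pO$ and mollifying. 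Then $\Theta_0^{(k)}$ is positive in $\bom$ with $\Theta_{0x}^{(k)}\in C_0^\infty(\Om)$, so (\ref{i0}) holds. Lemma \ref{lem111} therefore provides global weak solutions $(u^{(k)},\Theta^{(k)})$ of (\ref{0}) satisfying (\ref{reg}).

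Next we fix $T>0$. Since the approximate data form Cauchy sequences in the respective spaces, Lemma \ref{lem97} shows that $(u^{(k)})$ is Cauchy in $C^0([0,T];W_0^{1,2}(\Om))$ (the $L^2$ part of the norm being controlled by Poincaré's inequality), $(u^{(k)}_t)$ is Cauchy in $C^0([0,T];L^2(\Om))$, and $(\Theta^{(k)})$ is Cauchy in $C^0([0,T];L^2(\Om))\cap L^2((0,T);W^{1,2}(\Om))$. A diagonal argument over $T\in\N$ yields limits $u$ and $\Theta$ with the regularity in (\ref{reg}). We identify the limit of $u^{(k)}_t$ with $u_t$ by passing to the limit in distributions. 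Moreover, $\Theta\ge0$ a.e., and $(u,\Theta)$ attains the initial data.

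It remains to pass to the limit in (\ref{wu}) and (\ref{wt}), and we expect this to be the main (though mild) obstacle, because of the nonlinear terms $f'(\Theta^{(k)})\Theta^{(k)}_x$, $f'(\Theta^{(k)})\Theta^{(k)}_xu^{(k)}_t$ and $f(\Theta^{(k)})u^{(k)}_t$. Taking a further subsequence, $\Theta^{(k)}\to\Theta$ a.e. By (\ref{f}), $f'$ is bounded and continuous, so dominated convergence gives $f'(\Theta^{(k)})\Theta_x\to f'(\Theta)\Theta_x$ in $L^2(\Om\times(0,T))$. Combined with $\|f'\|_\infty\|\Theta_x^{(k)}-\Theta_x\|_{L^2}\to0$, this yields $f'(\Theta^{(k)})\Theta^{(k)}_x\to f'(\Theta)\Theta_x$ in $L^2(\Om\times(0,T))$. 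Since $u^{(k)}_t\to u_t$ in $L^2(\Om\times(0,T))$, the product converges in $L^1(\Om\times(0,T))$. For the last term we use $|f(\xi)-f(\hat\xi)|\le c|\xi-\hat\xi|$ together with the convergence of $\Theta^{(k)}$ in $L^2((0,T);L^\infty(\Om))$, which follows from the $W^{1,2}$ convergence. This gives $f(\Theta^{(k)})u^{(k)}_t\to f(\Theta)u_t$ in $L^1(\Om\times(0,T))$ and also confirms the integrability requirements in (\ref{w3}). The linear terms and the initial-data terms converge trivially. Hence $(u,\Theta)$ is a global weak solution satisfying (\ref{reg}).
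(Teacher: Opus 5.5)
Your proposal is correct and follows essentially the same route as the paper. You approximate by data satisfying (\ref{i0}), obtain solutions from Lemma \ref{lem111}, use Lemma \ref{lem97} to get Cauchy sequences in the spaces of (\ref{reg}), and pass to the limit in (\ref{wu}) and (\ref{wt}) via boundedness and continuity of $f'$ and the Lipschitz bound on $f$; your explicit choice $\Theta_0^{(k)}=\psi_k+\frac{1}{k}$, the Poincar\'e control of $u^{(k)}-u^{(k')}$, and the a.e.~convergent subsequence just fill in details the paper leaves implicit, and the diagonal argument is unnecessary but harmless.
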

\proof
  For $k\in\N$, we let
  \be{11.01}
	v_0^{(k)} \in C_0^\infty(\Om), 
	\quad 
	u_0^{(k)} \in C_0^\infty(\Om)
	\quad \mbox{and} \quad
	\Theta_0^{(k)} \in C^\infty(\bom)
	\mbox{ be such that $\Theta_0^{(k)} >0$ in $\bom$ and } \Theta_{0x}^{(k)} \in C_0^\infty(\Om),
  \ee
  and that
  \be{11.1}
	v_0^{(k)} \to u_{0t}
	\ \mbox{in } L^2(\Om),
	\quad
	u_0^{(k)} \to u_0
	\ \mbox{in } W^{1,2}(\Om)
	\quad \mbox{and} \quad
	\Theta_0^{(k)} \to \Theta_0
	\ \mbox{in } L^2(\Om)
	\qquad \mbox{as } k\to\infty.
  \ee
  Then due to (\ref{11.01}), from Lemma \ref{lem111} we know that for each $k\in\N$ the problem (\ref{0}) possesses
  a global weak solution $(u^{(k)},\Theta^{(k)})$
  fulfilling (\ref{reg}), whereupon Lemma \ref{lem97} asserts that thanks to (\ref{11.1})
  the collection of these solutions has the Cauchy sequence properties in (\ref{97.1}) for arbitrary $T>0$.
  By completeness, we thus obtain $u\in C^0([0,\infty);W_0^{1,2}(\Om))$ and 
  $\Theta\in C^0([0,\infty);L^2(\Om)) \cap L^2_{loc}([0,\infty);W^{1,2}(\Om))$ such that
  $u_t\in C^0([0,\infty);L^2(\Om))$ and $\Theta\ge 0$ a.e.~in $\Om\times (0,\infty)$, and that as $k\to\infty$ we have
  \be{11.2}
	u^{(k)} \to u,
	\quad 
	u_x^{(k)} \to u_x,
	\quad 
	u_t^{(k)} \to u_t
	\quad \mbox{and} \quad
	\Theta^{(k)} \to \Theta
	\qquad 
	\mbox{in } C^0_{loc}([0,\infty);L^2(\Om))
  \ee
  as well as
  \be{11.3}
	\Theta_x^{(k)} \to \Theta_x
	\qquad 
	\mbox{in } L^2_{loc}(\bom\times [0,\infty)).
  \ee
  Now for fixed $T>0$ and $k\in\N$, Definition \ref{dw} says that
  for all $\vp\in C_0^\infty(\Om\times [0,T))$, 
  \bea{11.4}
	\int_0^T \io u^{(k)} \vp_{tt}
	- \io u_{0t}^{(k)} \vp(\cdot,0)
	+ \io u_0^{(k)} \vp_t(\cdot,0)
	= -  \int_0^T \io u_x^{(k)} \vp_x
	- \int_0^T \io f'(\Theta^{(k)}) \Theta_x^{(k)} \vp,
  \eea
  and that
  for each $\phi\in C_0^\infty(\bom\times [0,T))$,
  \bea{11.5}
	& & \hs{-30mm}
	- \int_0^T \io \Theta^{(k)} \phi_t - \io \Theta_0^{(k)} \phi(\cdot,0) \nn\\
	&=& - \int_0^T \io \Theta_x^{(k)} \phi_x 
	+ \int_0^T \io f'(\Theta^{(k)}) \Theta_x^{(k)} u_t^{(k)} \phi
	+ \int_0^T \io f(\Theta^{(k)}) u_t^{(k)} \phi_x,
  \eea
  and from (\ref{11.1}), (\ref{11.2}) and (\ref{11.3}) we directly obtain that
  \bas
	\int_0^T \io u^{(k)} \vp_{tt}
	- \io u_{0t}^{(k)} \vp(\cdot,0)
	+ \io u_0^{(k)} \vp_t(\cdot,0)
	\to
	\int_0^T \io u \vp_{tt}
	- \io u_{0t} \vp(\cdot,0)
	+ \io u_0 \vp_t(\cdot,0)
  \eas
  and
  \bas
	\int_0^T \io u_x^{(k)} \vp_x
	\to \int_0^T \io u_x \vp_x,
  \eas
  and that
  \bas
	- \int_0^T \io \Theta^{(k)} \phi_t - \io \Theta_0^{(k)} \phi(\cdot,0)
	\to - \int_0^T \io \Theta \phi_t - \io \Theta_0 \phi(\cdot,0)
	\quad \mbox{and} \quad
	\int_0^T \io \Theta_x^{(k)} \phi_x 
	\to \int_0^T \io \Theta_x \phi_x
  \eas
  as $k\to\infty$.
  Moreover, the dominated convergence theorem together with the continuity and boundedness of $f'$ ensures that
  \bas
	\int_0^T \io f'(\Theta^{(k)}) \Theta_x^{(k)} \vp
	\to \int_0^T \io f'(\Theta) \Theta_x \vp
	\qquad \mbox{as } k\to\infty,
  \eas
  and that since $\Theta^{(k)}_x u_t^{(k)} \to \Theta_x u_t$ in $L^1_{loc}(\bom\times [0,\infty))$ by (\ref{11.2}) and (\ref{11.3}),
  also
  \bas
	\int_0^T \io f'(\Theta^{(k)}) \Theta_x^{(k)} u_t^{(k)} \phi
	\to \int_0^T \io f'(\Theta) \Theta_x u_t \phi
	\qquad \mbox{as } k\to\infty.
  \eas
  Once more using that $\sup_{\xi>0} \frac{f(\xi)}{\xi}<\infty$ by (\ref{f}), and that thus $f(\Theta^{(k)}) \to f(\Theta)$
  in $L^2_{loc}(\bom\times [0,\infty))$ as $k\to\infty$ according to (\ref{11.3}), we furthermore find that
  \bas
	\int_0^T \io f(\Theta^{(k)}) u_t^{(k)} \phi_x
	\to \int_0^T \io f(\Theta) u_t \phi_x
	\qquad \mbox{as } k\to\infty,
  \eas
  so that from (\ref{11.4}) and (\ref{11.5}) we conclude that, in fact, $(u,\Theta)$ is a weak solution of (\ref{0}).
\qed
Establishing our main results thereby reduces to collecting the pieces already provided:\abs
\proofc of Theorem \ref{theo99}. \quad
  The part concerning existence is covered by Lemma \ref{lem11}, while the uniqueness claim has been verified already in
  Proposition \ref{prop22}.
\qed
\proofc of Proposition \ref{prop98}. \quad
  The statement is an immediate consequence of Theorem \ref{theo99} when combined with Lemma \ref{lem97}.
\qed

\bigskip

{\bf Acknowlegements.} \quad
The author
acknowledges support of the Deutsche Forschungsgemeinschaft (Project No. 444955436).\abs
{\bf Conflict of interest statement.} \quad
The author declares that he has no conflict of interest, 
and that he has no relevant financial or non-financial interests to disclose.\abs
{\bf Data availability statement.} \quad
Data sharing is not applicable to this article as no datasets were
generated or analyzed during the current study.\abs

\end{document}